\providecommand{\U}[1]{\protect\rule{.1in}{.1in}}
\numberwithin{equation}{section}
\newcommand{\R}{\mathbb{R}}
\newtheorem{theorem}{Theorem}[section]
\newtheorem{proposition}[theorem]{Proposition}
\newtheorem{lemma}[theorem]{Lemma}
\newtheorem{corollary}[theorem]{Corollary}
\newtheorem{remark}{Remark}
\theoremstyle{remark}
\newcommand{\remove}[1]{ }
\begin{document}
\title[Control for 4NLS]{Stabilization and control  for the biharmonic Schr\"odinger equation}
\author[Capistrano--Filho]{Roberto A. Capistrano--Filho}
\address{Departmento de Matem\'atica,  Universidade Federal de Pernambuco (UFPE), 50740-545, Recife (PE), Brazil.}
\email{capistranofilho@dmat.ufpe.br}
\author[Cavalcante]{Márcio Cavalcante}
\address{Instituto de Matem\'atica, Universidade Federal de Alagoas (UFAL), Macei\'o (AL), Brazil.}
\email{marcio.melo@im.ufal.br}
\subjclass[2010]{Primary: 35Q55, Secondary: 93B05, 93D15, 35A21}
\keywords{Bourgain spaces; Exact controllability; Fourth order nonlinear Schr\"odinger;
Propagation of compactness; Propagation of regularity; Stabilizability}
\date{July, 2018}

\begin{abstract}
The main purpose of this paper is to show the global stabilization and exact controllability  properties for a fourth order nonlinear fourth order nonlinear Schr\"odinger system on a periodic domain $\mathbb{T}$ with internal control supported on an arbitrary sub-domain of $\mathbb{T}$. More precisely, by certain properties of propagation of compactness and regularity in Bourgain spaces, for the solutions of the associated linear system, we show that the system is globally exponentially stabilizable. This property together with the local exact controllability ensures that fourth order nonlinear Schr\"odinger is globally exactly controllable.
\end{abstract}
\maketitle

\section{Introduction\label{sec0}}

\subsection{Presentation of the model} Fourth-order nonlinear Schr\"odinger (4NLS) equation or biharmonic cubic nonlinear Schr\"odinger equation 
\begin{equation}
\label{fourtha}
i\partial_tu +\partial_x^2u-\partial_x^4u=\lambda |u|^2u,
\end{equation}
have been introduced by Karpman \cite{Karpman} and Karpman and Shagalov \cite{KarSha} to take into account the role of small fourth-order dispersion terms in the propagation of intense laser beams in a bulk medium with Kerr nonlinearity. Equation \eqref{fourtha} arises in many scientific fields such as quantum mechanics, nonlinear optics and plasma physics, and has been intensively studied with fruitful references (see \cite{Ben,CuiGuo,Karpman,Paus,Paus1} and references therein).

The past twenty years such 4NLS have been deeply studied from differents mathematical viewpoint. For example, Fibich \textit{et al.} \cite{FiIlPa} worked various properties of the equation in the subcritical regime, with part of their analysis relying on very interesting numerical developments.  The well-posedness and existence of the solutions has been shown (see, for instance, \cite{Paus,Paus1,tsutsumi,Tzvetkov,WenChaiGuo1}) by means of the energy method, harmonic analysis, etc.  

It is interesting to point out that there are many works related with the equations \eqref{fourth} not only dealing with well-posedness theory. For example, recently Natali  and Pastor \cite{NataliPastor}, considered the fourth-order dispersive cubic nonlinear Schr\"odinger equation on the line with mixed dispersion. They proved the orbital stability, in the $H^2(\mathbb{R})$--energy space by constructing a suitable Lyapunov function. Considering the  equation \eqref{fourth} on the circle, Oh and Tzvetkov  \cite{Tzvetkov}, showed that the mean-zero Gaussian measures on Sobolev spaces $H^s (\mathbb{T})$, for $s>\frac{3}{4}$, are quasi-invariant under the flow. For instance, in this spirit, there has been a significant progress over the recent years and the reader can have a great view in to nonlinear Schr\"odinger equation in \cite{Burq,Burq1}.

\subsection{Setting of the problem}

In this article our purpose is to study properties of stabilization and, consequently, controllability for the periodic one-dimensional fourth order nonlinear dispersive Schr\"odinger equation:
\begin{equation}\label{fourth}
\begin{cases}
i\partial_tu +\partial_x^2u-\partial_x^4u
=\lambda |u|^2u,& (x,t)\in \mathbb{T}\times \mathbb{R},\\
u(x,0)=u_0(x),& x\in \mathbb{T}.\end{cases}
\end{equation}
%Here, the aim of this work is 
To give properties of controllability of the system \eqref{fourth} in large time for a control supported in any small open subset of $\mathbb{T}$ we will study the equation \eqref{fourth} from a control point of view with a forcing term $f=f(x,t)$ added to the equation as a control input 
\begin{equation}\label{fourthC}
\begin{cases}
i\partial_tu +\partial_x^2u-\partial_x^4u
=\lambda |u|^2u +f,& (x,t)\in \mathbb{T}\times \mathbb{R},\\
u(x,0)=u_0(x),& x\in \mathbb{T},\end{cases}
\end{equation}
where $f$ is assumed to be supported in a given open subset $\omega$ of $\mathbb{T}$. Therefore,  the following classical issues  related with the control theory are considered in this work:

\vglue 0.2 cm
\noindent\textbf{Exact control problem:}  Given an initial state $u_0$ and a terminal state $u_1$ in a certain space, can one find an appropriate control input $f$ so that the equation \eqref{fourthC} admits a solution $u$ which satisfies $u(\cdot,0)= u_0$ and $u(\cdot,T)= u_1$ ?

\vglue 0.2 cm
\noindent\textbf{Stabilization problem:}  Can one find a feedback control law $f$ so that the system \eqref{fourthC} is asymptotically stable as $t\to\infty $?

\vglue 0.2 cm
% prove our result. In \cite{dehman-gerard-lebeau} the exact controllability in $H^1$ is proved for defocusing NLS on compact surfaces. The strategy to prove our main result is to use the techniques of semiclassical and microlocal analysis, consequently following the ideas of  Dehman et al. \cite{dehman-lebeau}, see also \cite{dehman-gerard-lebeau}. 

%\subsection{State of the art} 
%Before to present details of our main results, we will introduce previous works with some physics motivations and results of controllability for the system \eqref{fourth}, showing that the fourth order nonlinear Schr\"odinger equation is interesting of mathematical point of view. 

%Our goal in this article is to give positive answers for the questions $\mathcal{A}$ and $\mathcal{B}$ mentioned previously.

\subsection{Previous results} When we consider equation \eqref{fourtha} on a periodic domain $\mathbb{T}$ is not of our knowledge any result about control theory. However, there are interesting results on a bounded domain of $\mathbb{R}$ or $\mathbb{R}^n$, which we will summarize on the paragraphs below.

The first result about the exact controllability of 4NLS \eqref{fourtha} on a bounded domain $\Omega$ of the $\mathbb{R}^n$ is due to Zheng and  Zhongcheng in \cite{zz}. In this work, by means of an $L^2$--Neumann boundary control, the authors proved that the solution is exactly controllable in $H^s(\Omega)$, $s=-2$, for an arbitrarily small time. They used Hilbert Uniqueness Method (HUM) (see, for instance, \cite{DolRus1977,lions1}) combined with the multiplier techniques to get the main result of the article. More recently,  in \cite{zheng}, Zheng proved another interesting problem related with a control theory. He showed a global Carleman estimate for the fourth order Schr\"odinger equation posed on a $1- d$ finite domain. The Carleman estimate is used to prove the Lipschitz stability for an inverse problem consisting in retrieving a stationary potential in the Schr\"odinger equation from boundary measurements.

Still on control theory Wen \textit{et. al}, in two works \cite{WenChaiGuo1,WenChaiGuo}, studied well-posedness and control theory related with the equation \eqref{fourtha} on a bounded domain of $\mathbb{R}^n$, for $n\geq2$. In \cite{WenChaiGuo1}, they proved the Neumann boundary controllability with collocated observation. With this result in hads, the exponential stability of the closed-loop system under proportional output feedback control holds. Recently, the authors, in \cite{WenChaiGuo}, gave a positive answers when considered the equation with hinged boundary by either moment or Dirichlet boundary control and collocated observation, respectively.

Lastly, to get a general outline of the control theory already done for the system \eqref{fourtha}, two interesting problems were studied recently by Aksas and Rebiai \cite{AkReSa} and Peng \cite{Peng}: Stochastic control problem and uniform stabilization, in a smooth bounded domain $\Omega$ of $\mathbb{R}^n$ and on the interval $I=(0,1)$ of $\mathbb{R}$, respectively. The first work, by introducing suitable dissipative boundary conditions, the authors proved that the solution decays exponentially in $L^2(\Omega)$ when the damping term is effective on a neighborhood of a part of the boundary. The results are established by using multiplier techniques and compactness/uniqueness arguments. With regard of the second work, above mentioned, the author showed a Carleman estimates for forward and backward stochastic fourth order Schr\"odinger equations which provided to prove the observability inequality,  unique continuation property and, consequently,  the exact controllability for the forward and backward stochastic system associated to \eqref{fourtha}.

\subsection{Notations and main results} Before to present our main results, let us introduce the Bourgain spaces associated to the 4NLS \eqref{fourtha}. For given $b,s\in\mathbb{R}$ and a function $u:\mathbb{T\times
R\to R}$, defines the quantity%
\[
\left\Vert u\right\Vert _{X_{b,s}}:=\left(  \sum\limits_{k=-\infty}^{\infty
}\int_{\mathbb{R}}\left\langle k\right\rangle ^{2s}\left\langle \tau-p\left(
k\right)  \right\rangle ^{2b}\left\vert \hat{u}\left(  k,\tau\right)
\right\vert ^{2}d\tau\right)  ^{\frac{1}{2}}\text{,}%
\]%
%\[
%\left\Vert u\right\Vert _{Y_{b,s}}:=\left(  \sum\limits_{k=-\infty}^{\infty
%}\left(  \int_{\mathbb{R}}\left\langle k\right\rangle ^{s}\left\langle
%\tau-p\left(  k\right)  \right\rangle ^{b}\left\vert \hat{u}\left(
%k,\tau\right)  \right\vert d\tau\right)  ^{2}\right)  ^{\frac{1}{2}}%
%\]
where $\hat{u}\left(  k,\tau\right)  $ denotes de Fourier transform of $u$
with respect to the space variable $x$ and the time variable $t$,
$\left\langle \cdot\right\rangle =\sqrt{1+\left\vert \text{ }\cdot\text{
}\right\vert ^{2}}$ and $p\left(  k\right)  =k^{4}-k^{2}$. We 
denote by $D^{r}$ the operator defined on $\mathcal{D}^{\prime}\left(
\mathbb{T}\right)  $ by%
\begin{equation}\label{k10}
\widehat{D^{r}u}\left(  k\right)  =\left\{
\begin{array}
[c]{ll}%
\left\vert k\right\vert ^{r}\hat{u}\left(  k\right)  & \text{if }%
k\neq0\text{,}\\
\hat{u}\left(  0\right)  & \text{if }k=0\text{.}%
\end{array}
\right.  
\end{equation}
The Bourgain space $X_{b,s}$  associated to the fourth order linear dispersive Schr\"odinger equation on $\mathbb{T}$ is the completion of the Schwartz space
$\mathcal{S}\left(  \mathbb{T\times R}\right)  $ under the norm $\left\Vert
u\right\Vert _{X_{b,s}}$. Note
that for any $u\in X_{b,s}$,%
\[
\left\Vert u\right\Vert _{X_{b,s}}=\left\Vert W\left(  -t\right)  u\right\Vert
_{H^{b}\left(  \mathbb{R},H^{s}\left(  \mathbb{T}\right)  \right)
}\text{.}%
\]
For a given interval $I$, let $X_{b,s}\left(  I\right)  $  be the restriction space of $X_{b,s}$ to the
interval $I$ with the norm
\[
\left\Vert u\right\Vert _{X_{b,s}}=\inf\left\{  \left\Vert \tilde
{u}\right\Vert _{X_{b,s}}\text{ }|\text{ }\tilde{u}=u\text{ on }%
\mathbb{T}\times I\right\}.
\]%
By simplicity, we denote $X_{b,s}\left(  I\right)  $ by $X_{b,s}^{T}$  when $I=\left(  0,T\right)$. 

To clarify, the first issue to be proved in this article is the following one.

\vspace{0.1cm}

\noindent\textit{Given $T>0$ and $u_0,u_1\in L^2(\mathbb{T})$, does there exist a control input $g\in C([0,T];L^2(\mathbb{T}))$ in order to make the solution of 
\begin{equation}\label{Controlzeroa}
	\begin{cases}
	i\partial_tu +\partial_x^2u-\partial_x^4u	=\lambda |u|^2u+g,& (x,t)\in \mathbb{T}\times \mathbb{R},\\
	u(x,0)=u_0(x),& x\in \mathbb{T}\end{cases}
	\end{equation}
satisfy $u(\cdot,T) = u_1$?}

\vspace{0.1cm}

The strategy to answer this question is first to prove a local exact controllability result and to combine it with a global stabilization of the solutions to ensure the global controllability of the system \eqref{Controlzeroa}. Thus, in this spirit, first main result of the article is concerned with the control property of 4NLS \eqref{Controlzeroa} near to $0$, that will be proved using a perturbation argument introduced by Zuazua in \cite{Zuazua}. More precisely, we will show the following local controllability:

\begin{theorem}\label{main1}
Let  $\omega$ be any nonempty open subset of $\mathbb{T}$ and $T>0$. Then, there exist $\epsilon>0$ and $\eta>0$ such that for any $u_0\in L^2(\mathbb{T})$ with $$\left\Vert u_0 \right\Vert _{L^2(\mathbb{T})}<\epsilon,$$ there exists $g\in C([0,T];L^2(\mathbb{T}))$, with $\left\Vert g \right\Vert _{L^{\infty}([0,T];L^2(\mathbb{T}))}<\eta$ and $supp(g)\subset\omega\times(0,T)$ such that the unique solution $u\in X^T_{b,0}$ of the system \eqref{Controlzeroa}
%\begin{equation}\label{Controlzero1}
	%\begin{cases}
	%i\partial_tu +\partial_x^2u-\partial_x^4u	=\lambda |u|^2u+g,& (x,t)\in \mathbb{T}\times \mathbb{R},\\
	%u(x,0)=u_0(x),& x\in \mathbb{T}\end{cases}
	%\end{equation}
	satisfies $u(x,T)=0$. In addition, if $u_0\in H^s(\mathbb{T})$, with $s\geq0$, with a large $H^s-$norm, then $g\in C([0,T];H^s(\mathbb{T}))$.
\end{theorem}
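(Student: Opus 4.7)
The plan is to follow the perturbation strategy of Zuazua: first prove exact null-controllability for the linearization of \eqref{Controlzeroa} around $0$, and then obtain local controllability of the full nonlinear problem by a contraction mapping argument in the Bourgain space $X_{b,0}^T$ for some $b>1/2$ close to $1/2$.

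\textbf{Step 1 (Linear controllability via HUM).} Fix $a\in C^\infty(\mathbb{T})$ with $\mathrm{supp}(a)\subset\omega$ and $a\not\equiv 0$. By the standard HUM duality argument, the exact null-controllability of
\[
i\partial_t u+\partial_x^2 u-\partial_x^4 u=G u_0+a\,h,\qquad u(\cdot,0)=u_0,
\]
with controls of the form $a\,h$ is equivalent to the observability inequality
\[
\|\varphi_0\|_{L^2(\mathbb{T})}^2\le C\int_0^T\|a\varphi(\cdot,t)\|_{L^2(\mathbb{T})}^2\,dt
\]
for every solution of the adjoint linear biharmonic Schr\"odinger equation with $\varphi(\cdot,0)=\varphi_0$. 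The inequality I would prove by a compactness-uniqueness scheme: the propagation-of-compactness and propagation-of-regularity properties announced in the abstract reduce the inequality to the unique continuation statement ``$a\varphi\equiv 0$ on $\mathbb{T}\times(0,T)\Rightarrow \varphi\equiv 0$'', which in turn follows from Fourier expansion and the fact that the symbol $p(k)=k^4-k^2$ is injective on $\mathbb{Z}_{\ge 0}$ with increasing gaps $p(k+1)-p(k)\sim 4k^3$. HUM then yields a bounded linear operator $\Phi:L^2(\mathbb{T})\to L^\infty((0,T);L^2(\mathbb{T}))$ with $\mathrm{supp}(\Phi u_0)\subset \omega\times(0,T)$ such that the solution of the controlled linear problem with control $g=\Phi u_0$ satisfies $u(\cdot,T)=0$.

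\textbf{Step 2 (Nonlinear fixed point).} For $R>0$ to be chosen, define a map $\Gamma$ on the closed ball $B_R\subset X_{b,0}^T$ by $\Gamma(v)=u$, where $u$ is the solution of
\[
\begin{cases}
i\partial_t u+\partial_x^2 u-\partial_x^4 u=\lambda |v|^2 v+g_v, & (x,t)\in\mathbb{T}\times(0,T),\\
u(\cdot,0)=u_0,
\end{cases}
\]
and $g_v$ is the HUM control (supported in $\omega\times(0,T)$) steering the \emph{linearized} problem with source $\lambda|v|^2 v$ from $u_0$ to $0$ at time $T$. Combining the linear $X_{b,0}^T$-estimates for the biharmonic Schr\"odinger group, the boundedness of $\Phi$, and the trilinear estimate $\||v|^2 v\|_{X_{b-1,0}^T}\lesssim\|v\|_{X_{b,0}^T}^3$ (which must be verified for the symbol $p(k)=k^4-k^2$; it is well-known for the cubic 4NLS on $\mathbb{T}$), I would show that $\Gamma$ maps $B_R$ into itself and is a contraction there, provided $\|u_0\|_{L^2(\mathbb{T})}<\epsilon$ and $R$ are chosen sufficiently small. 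The unique fixed point $u$ solves \eqref{Controlzeroa} and, by construction, satisfies $u(\cdot,T)=0$ with control $g=g_u$ obeying $\|g\|_{L^\infty((0,T);L^2(\mathbb{T}))}\le C\|u_0\|_{L^2(\mathbb{T})}\le \eta$.

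\textbf{Step 3 (Propagation of regularity).} Suppose in addition $u_0\in H^s(\mathbb{T})$, $s\ge 0$. I would repeat the HUM construction in $H^s$, obtaining a bounded operator $\Phi_s:H^s(\mathbb{T})\to L^\infty((0,T);H^s(\mathbb{T}))$; the corresponding $H^s$-observability estimate again follows by compactness-uniqueness together with propagation of regularity. Since $|u|^2 u$ preserves $H^s$ for $s\ge 0$ (via the appropriate multilinear $X_{b,s}$-estimate), one bootstraps the $L^2$-fixed point of Step 2 to conclude that $g=g_u\in C([0,T];H^s(\mathbb{T}))$ regardless of the size of $\|u_0\|_{H^s}$, as the smallness requirement lives only at the $L^2$-level.

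\textbf{Main obstacle.} The hardest ingredient is the observability inequality in Step 1: while the gap asymptotics $p(k+1)-p(k)\sim 4k^3$ suggest an Ingham-type proof, the compactness-uniqueness route is both more robust and the only one that will extend to the nonlinear stabilization announced in the abstract. Carrying it out requires a careful propagation-of-compactness argument in the Bourgain spaces tailored to the symbol $p(k)=k^4-k^2$, together with the sharp trilinear estimate controlling the cubic resonances. The linear and nonlinear pieces must be calibrated so that the constant in the observability inequality, the constant in the trilinear estimate, and the radius $R$ jointly close the contraction.
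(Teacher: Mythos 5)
Your overall strategy is the same as the paper's: Zuazua's perturbation argument, HUM for the linearized problem, and a contraction in Bourgain spaces using the trilinear estimate $\||v|^2v\|_{X^T_{-b',0}}\lesssim\|v\|^3_{X^T_{b,0}}$. The packaging differs slightly but equivalently: you iterate on the state $v$ with the control updated at each step, whereas the paper iterates on the adjoint datum $\Phi_0$, writing $\mathcal{L}\Phi_0=\mathcal{K}\Phi_0+\mathcal{R}\Phi_0$ with $\mathcal{R}$ the linear HUM isomorphism and seeking a fixed point of $\Phi_0\mapsto -\mathcal{R}^{-1}\mathcal{K}\Phi_0+\mathcal{R}^{-1}u_0$. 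Also note the paper does not prove the $L^2$ observability inequality for the linear flow; it cites it from the literature, while you propose to establish it by compactness--uniqueness, which is more work but consistent with the machinery developed later for the stabilization theorem.

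Two points need repair. First, your claimed injectivity of $p(k)=k^4-k^2$ on $\mathbb{Z}_{\ge 0}$ is false: $p(0)=p(1)=p(-1)=0$, so the eigenvalue $0$ of $\partial_x^4-\partial_x^2$ has a three-dimensional eigenspace (and every other eigenvalue is doubly degenerate by the $k\mapsto -k$ symmetry). The Ingham/unique-continuation step must therefore be phrased for finitely degenerate spectral packets; it still works because a nonzero trigonometric polynomial cannot vanish on an open set, but the argument as stated does not close. Second, Step 3 is internally inconsistent: you build a separate $H^s$-HUM operator $\Phi_s$ but then claim to ``bootstrap the $L^2$-fixed point.'' The control produced in Step 2 is built from the $L^2$-HUM operator, and to conclude that this \emph{same} $g$ lies in $C([0,T];H^s(\mathbb{T}))$ you need the $L^2$-HUM operator $\mathcal{R}$ (and its inverse) to be bounded on $H^s$ --- this is exactly the paper's Lemma 4.1, proved via a commutator estimate with $D^s$, namely $\|\mathcal{R}^{-1}\Psi_0\|_{H^s}\le C\|\Psi_0\|_{H^s}+C_s\|\Psi_0\|_{H^{s-1}}$ plus iteration. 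A distinct $H^s$-HUM operator would generally produce a different control, so your argument as written does not yield the ``In addition'' clause; the paper's route through the isomorphism property of the single operator $\mathcal{R}$, combined with the invariant set $F$ mixing a small $L^2$-ball with large $H^i$-balls, is what makes the smallness live only at the $L^2$ level.
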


To introduce our second main result, let $a(x)\in L^{\infty}(\mathbb{T})$ real valued, the stabilization system that we will consider is the following
\begin{equation}
\left\{
\begin{array}
[c]{lll}%
i\partial_{t}u+\partial_{x}^{2}u-\partial_{x}^{4}u+ia^2u=\lambda \left\vert
u\right\vert^2u &  & \text{on }\mathbb{T\times}\left(
0,T\right)  \text{,}\\
u\left(  x,0\right)  =u_{0}\left(  x\right)  &  & \text{on }\mathbb{T}\text{,}%
\end{array}
\right.  \label{k35a}%
\end{equation}
where $\lambda\in\mathbb{R}$ and $u_{0}\in L^{2}\left(\mathbb{T}\right)  $, in $L^2$--level.

Note that, easily, we can check that the solution of \eqref{k35a} satisfies the mass decay
\begin{equation}\label{energyint}
\left\Vert u\left(\cdot,  t\right)  \right\Vert _{L^2(\mathbb{T})}^{2}=\left\Vert u\left(\cdot,0\right)  \right\Vert _{L^2(\mathbb{T})}^{2}-\int_{0}^t\left\Vert au(\tau)\right\Vert _{L^2(\mathbb{T})}^{2}d\tau, \quad\forall t\geq0.
\end{equation}
Observe that for $a(x)=0$ we have, by \eqref{energyint}, the mass of the system is indeed conserved. However, assuming that $a(x)^2>\eta> 0$ on some nonempty open set $\omega$ of $\mathbb{T}$, identity \eqref{energyint} states that we have an possibility of a exponential decay of the solutions related of \eqref{k35a}. In fact,  following the ideas of Dehman and Lebeau \cite{dehman-lebeau}, see also \cite{dehman-gerard-lebeau}, by using techniques of semiclassical and microlocal analysis, the result that we are able to prove, for large data, can be read as follows:

\begin{theorem}\label{main} Let $a(x):=a\in L^{\infty}(\mathbb{T})$ taking real values such that $a^2(x)>\eta$ on a nonempty open set $\omega\subset\mathbb{T}$, for some constant $\eta>0$. Then, for any $R_{0}>0$ and $u_{0}\in L^{2}\left(  \mathbb{T}\right)  $ with
\[
\left\Vert u_{0}\right\Vert _{L^2(\mathbb{T})}\leq R_{0}\text{,}%
\]
there exist $\gamma>0$ and $C>0$ such that the corresponding solution $u$ of
(\ref{k35a}) satisfies
\begin{equation}
\left\Vert u\left(  \cdot,t\right)  \right\Vert _{L^2(\mathbb{T})}\leq Ce^{-\gamma
t}\left\Vert u_{0}\right\Vert _{L^2(\mathbb{T})}\text{, }\forall t>0\text{.} \label{k45}
\end{equation}
\end{theorem}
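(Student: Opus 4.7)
The plan is to reduce the exponential decay of Theorem \ref{main} to a nonlinear observability inequality for the damped equation \eqref{k35a}, and then to establish that observability through a compactness-uniqueness argument in the spirit of Dehman--G\'erard--Lebeau, adapted to the biharmonic phase $p(k)=k^4-k^2$. Concretely, from the mass identity \eqref{energyint} it suffices to show that for every $R_0>0$ there exist $T>0$ and $C=C(R_0,T)>0$ such that
\[
\|u_0\|^2_{L^2(\mathbb{T})} \le C \int_0^T \|a u(t)\|^2_{L^2(\mathbb{T})}\,dt
\]
for every solution of \eqref{k35a} with $\|u_0\|_{L^2(\mathbb{T})}\le R_0$. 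Plugging this into \eqref{energyint} gives $\|u(T)\|^2_{L^2(\mathbb{T})}\le (1-1/C)\|u_0\|^2_{L^2(\mathbb{T})}$; since \eqref{energyint} also ensures that the flow preserves the ball $\{\|v\|_{L^2(\mathbb{T})}\le R_0\}$, iterating the same estimate on $[nT,(n+1)T]$ yields \eqref{k45} with an explicit $\gamma$.

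To prove the observability, I argue by contradiction. Assume there is a sequence $u_{n,0}\in L^2(\mathbb{T})$ with $\alpha_n:=\|u_{n,0}\|_{L^2(\mathbb{T})}\le R_0$ such that the associated solutions $u_n$ of \eqref{k35a} satisfy
\[
\alpha_n^2 > n\int_0^T \|a u_n(t)\|^2_{L^2(\mathbb{T})}\,dt.
\]
Up to a subsequence, $\alpha_n\to\alpha\in[0,R_0]$. If $\alpha=0$, I rescale $v_n:=u_n/\alpha_n$: then $v_n$ satisfies a damped equation whose cubic term has coefficient $\lambda\alpha_n^2\to 0$ in $X_{b,0}^T$, so any limit solves the \emph{linear} damped biharmonic Schr\"odinger equation. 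If $\alpha>0$, no rescaling is needed and the limit solves the full nonlinear equation \eqref{k35a}. In either case, the Bourgain well-posedness theory keeps the sequence bounded in $X_{b,0}^T$, a weak limit $u_\infty$ (resp. $v_\infty$) exists, and the integral assumption forces $au_\infty\equiv 0$ on $\mathbb{T}\times(0,T)$, so the limit vanishes on $\omega\times(0,T)$, while its initial mass is $\alpha$ (resp. $1$).

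The decisive step is to upgrade weak convergence to strong convergence in $L^2_{\mathrm{loc}}(\mathbb{T}\times(0,T))$, which is indispensable both to pass to the limit in the nonlinearity when $\alpha>0$ and to guarantee that the trace at $t=0$ retains its prescribed nonzero mass. I would obtain this from a \emph{propagation of compactness} property in $X_{b,0}^T$ tailored to the fourth-order phase $p(k)=k^4-k^2$, in parallel with the mechanism developed for second-order NLS by Dehman--G\'erard--Lebeau and by Laurent. A companion \emph{propagation of regularity} would then lift the limit from $L^2$ to arbitrary Sobolev regularity, producing a classical solution that vanishes on $\omega\times(0,T)$. A \emph{unique continuation property} for the operator $i\partial_t+\partial_x^2-\partial_x^4$ (applied, when $\alpha>0$, after linearization around the smooth limit, whose cubic term is absorbed into a bounded potential) then forces the limit to be identically zero, contradicting its nonzero initial mass.

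The main obstacle is exactly the pair of microlocal ingredients above. Propagation of compactness and propagation of regularity in the Bourgain spaces $X_{b,s}$ have to be redone for the quartic-quadratic symbol $p(k)=k^4-k^2$ rather than the Schr\"odinger $k^2$: the resonance set for $p(k_1)+p(k_2)-p(k_1+k_2)$ and the underlying bilinear and trilinear $X_{b,s}$ estimates are specific to the biharmonic dispersion and must be reworked with care. The unique continuation statement for the fourth-order Schr\"odinger operator on $\mathbb{T}$ is the second sensitive point: it cannot be quoted from the second-order theory and typically demands a Carleman estimate (or a Holmgren-type argument) adapted to $i\partial_t+\partial_x^2-\partial_x^4$. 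Once these two ingredients are in place, the compactness-uniqueness scheme closes the argument in a standard way.
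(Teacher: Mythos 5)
Your proposal follows essentially the same route as the paper: reduction of the decay to an observability inequality via the mass identity, a contradiction argument splitting into the cases $\alpha>0$ and $\alpha=0$ (with the rescaling $v_n=u_n/\alpha_n$ in the latter), and closure via propagation of compactness/regularity in the Bourgain spaces adapted to $p(k)=k^4-k^2$ together with a Carleman-based unique continuation property for $i\partial_t+\partial_x^2-\partial_x^4$. These are precisely the ingredients the paper develops in Sections 5--7, so the argument is correct and matches the paper's proof.
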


Finally, the global controllability result which one can be established is the following:

\begin{theorem}\label{main2}
Let $1/2\leq b<21/16$, $\omega$ be any nonempty open subset of $\mathbb{T}$ and $R_0>0$. Then, there exist $T>0$ and $C>0$ such that for any $u_0$ and $u_1$ in $L^2(\mathbb{T})$ with $$\left\Vert u_0 \right\Vert _{L^2(\mathbb{T})}<R_0 \quad \text{and} \quad \left\Vert u_1 \right\Vert _{L^2(\mathbb{T})}<R_0$$ there exists $g\in C([0,T];L^2(\mathbb{T}))$, with $\left\Vert g \right\Vert _{L^{\infty}([0,T];L^2(\mathbb{T}))}<\eta$, with $supp(g)\subset\omega\times(0,T)$, such that the unique solution $u\in X^T_{b,0}$ of the system \eqref{Controlzeroa}
%\begin{equation}\label{Controlzero2}
	%\begin{cases}
	%i\partial_tu +\partial_x^2u-\partial_x^4u	=\lambda |u|^2u+g,& (x,t)\in \mathbb{T}\times \mathbb{R},\\
	%u(x,0)=u_0(x),& x\in \mathbb{T}\end{cases}
	%\end{equation}
	satisfies $u(x,T)=u_1$. In addition, if $u_0$ and $u_1$ belongs to $H^s(\mathbb{T})$, $s\geq0$, with a large $H^s-$norm, then $g\in C([0,T];H^s(\mathbb{T}))$.
\end{theorem}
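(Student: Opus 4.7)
The strategy is the standard bridge-via-zero scheme: use the exponential stabilization from Theorem \ref{main} to shrink both the initial and terminal data into the $\epsilon$-ball on which the local null-controllability from Theorem \ref{main1} operates, and connect the two small endpoints through the origin using the time-reversibility of the underlying equation. Fix a smooth real $a\in C^\infty(\mathbb{T})$ with $\operatorname{supp}(a)\subset\omega$ and $a^2(x)>\eta$ on some nonempty open $\omega'\subset\omega$, so that Theorem \ref{main} applies to the damped system \eqref{k35a} driven by this $a$. Fix $T_2>0$, let $\epsilon=\epsilon(T_2/2)$ be the threshold furnished by Theorem \ref{main1} for time $T_2/2$, and using Theorem \ref{main} select $T_1=T_1(R_0,\eta,\epsilon)$ large enough that every $L^2$-solution of \eqref{k35a} with initial norm $\leq R_0$ has norm $\leq\epsilon/2$ at time $T_1$. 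Set $T:=2T_1+T_2$.

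On $[0,T_1]$ define $u$ as the solution $v$ of \eqref{k35a} starting at $u_0$; then $v$ solves \eqref{Controlzeroa} with feedback control $g_1(x,t):=-ia^2(x)v(x,t)$, which lies in $C([0,T_1];L^2(\mathbb{T}))$, is supported in $\omega$, and satisfies $\|v(T_1)\|_{L^2}\leq\epsilon/2$. For the symmetric leg $[T_1+T_2,T]$, exploit the reversibility of \eqref{fourth} (valid because $\lambda\in\mathbb{R}$) under $(x,t,u)\mapsto(x,-t,\bar u)$: solve \eqref{k35a} from the datum $\bar u_1$ for a time $T_1$, obtaining $\tilde v$ with $\|\tilde v(T_1)\|_{L^2}\leq\epsilon/2$, and set $u(x,t):=\overline{\tilde v(x,T-t)}$ for $t\in[T_1+T_2,T]$. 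A direct computation shows that this $u$ solves \eqref{Controlzeroa} on $[T_1+T_2,T]$ with control $g_4(x,t):=ia^2(x)u(x,t)$, again supported in $\omega$, and verifies $u(T)=u_1$ while $v_1:=u(T_1+T_2)=\overline{\tilde v(T_1)}$ has $\|v_1\|_{L^2}\leq\epsilon/2$. Finally, Theorem \ref{main1} produces on $[T_1,T_1+T_2/2]$ a small control $g_2$ driving $v_0:=v(T_1)$ to $0$; applying the same theorem under the time-reversal symmetry above gives on $[T_1+T_2/2,T_1+T_2]$ a small control $g_3$ driving $0$ to $v_1$.

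Concatenating $g_1,g_2,g_3,g_4$ yields $g\in C([0,T];L^2(\mathbb{T}))$, supported in $\omega\times(0,T)$, whose associated solution of \eqref{Controlzeroa} lies in $X^T_{b,0}$ and satisfies $u(\cdot,0)=u_0$, $u(\cdot,T)=u_1$; enlarging $T_1$ if necessary and combining the quantitative $L^\infty L^2$-bounds from Theorems \ref{main} and \ref{main1} at each junction keeps $\|g\|_{L^\infty L^2}$ below the announced $\eta$. The $H^s$ extension merges the $H^s$-part of Theorem \ref{main1} on the two middle legs with a standard persistence-of-regularity estimate in $X^T_{b,s}$ for \eqref{k35a} on the two outer legs, noting that $\pm ia^2u\in C([0,T];H^s)$ whenever $u$ is, since $a$ is smooth. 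The main technical point is the backward solvability of \eqref{k35a} required to construct the last leg (equivalently, to run \eqref{k35a} from $\bar u_1$ and then reverse time on a fixed interval $[0,T_1]$): this is handled by the local well-posedness of \eqref{fourth} in $X^T_{b,0}$ together with the observation that the damping $-a^2u$ is a bounded $L^2$-perturbation of the Hamiltonian flow, so the Cauchy problem for \eqref{k35a} is well-posed in $L^2$ in both time directions.
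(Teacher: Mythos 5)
Your proposal is correct and follows exactly the scheme the paper itself invokes for Theorem \ref{main2} (which it does not write out in detail, referring instead to the standard combination ``as is usual in control theory''): stabilize both $u_0$ and the time-reversed, conjugated $u_1$ into the small ball via Theorem \ref{main}, connect the two small states through $0$ with Theorem \ref{main1} and the symmetry $(x,t,u)\mapsto(x,-t,\bar u)$, and read the damping terms $\mp ia^2u$ as controls supported in $\omega$. The only caveat is the bound $\left\Vert g\right\Vert_{L^{\infty}([0,T];L^2)}<\eta$: the feedback legs have size of order $\left\Vert a\right\Vert_{\infty}^2R_0$ near the endpoints, so this cannot be made small for large data, but this is an ambiguity already present in the paper's statement (where $\eta$ is not quantified) rather than a defect of your argument.
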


Let us describe briefly the main arguments of the proof of these theorems.  Precisely, the control result for large data (Theorem \ref{main2}) will be a combination of a global stabilization result (Theorem \ref{main}) and the local control result (Theorem \ref{main1}), as is usual in control theory, see e.g., \cite{dehman-lebeau-zuazua,dehman-gerard-lebeau,Laurent-esaim,LaurentLinaresRosier,Laurent}. 

With respect to the proof of Theorem \ref{main}, in general lines, first, the functional spaces used here are the Bourgain spaces which are especially suited for solving dispersive equations. Thus, the step one is to prove the following \textit{strichartz estimate} for the operator of fourth order Scr\"odinger equation:
$$
\left\Vert u\right\Vert _{L^{4}\left(  [0,T]\times\mathbb{T}
\right)  }\leq C\left\Vert u\right\Vert _{X_{\frac{5}{16},0}^{T}}\text{,}
$$ this allows to prove the following \textit{multilinear estimates} in $X_{b,s}^T$:
\begin{equation*}
\begin{split}
\left\Vert   u_1u_2\overline{u}_3\right  \Vert _{X_{b-1,s}^{T}}&\leq
C\left\Vert u_1\right\Vert _{X_{\frac{5}{16},0}^{T}}\left\Vert u_2\right\Vert_{X_{\frac{5}{16},0}^{T}}\left\Vert u_3\right\Vert_{X_{\frac{5}{16},s}^{T}}\text{,}\\
\left\Vert  |u|^2u- |v|^2v\right  \Vert _{X_{b-1,s}^{T}}&\leq
C(\left\Vert u \right\Vert _{X_{\frac{5}{16},s}^{T}}^2+\left\Vert v\right\Vert_{X_{\frac{5}{16},s}^{T}})^2\left\Vert u-v\right\Vert_{X_{\frac{5}{16},s}^{T}}\text{,}
\end{split}
\end{equation*}
where $s\geq 0$, $b<\frac{21}{16}$ and $T\leq 1$ are given and $C(s):=C>0$.  On the step two, $H^s(\mathbb{T})$ propagation of regularity and compacity (see section \ref{sec5} below)  from the state to the control is obtained using this property for the linear control and a local linear behavior. Lastly, results of propagation together with the \textit{unique continuation property (UCP)}, bellow presented, guarantees the proof of Theorem \ref{main}. 
\begin{proposition}\label{UCP_int}
For every $T>0$ and $\omega$ any nonempty open set of $\mathbb{T}$, the only solution $u\in C^{\infty}([0,T]\times\mathbb{T})$ of the system%
\[
\left\{
\begin{array}
[c]{lll}%
i\partial_{t}u+\partial_{x}^{2}u-\partial_{x}^{4}u=b(x,t)u&  & \text{on }\mathbb{T\times}\left(  0,T\right)
\text{,}\\
u=0 &  & \text{on }\omega\times\left(  0,T\right)  \text{,}%
\end{array}
\right.
\]
where $b(x,t)\in C^{\infty}([0,T]\times\mathbb{T})$, is the trivial one 
\[
u\left(  x,t\right)  =0\text{ \ on \ }\mathbb{T\times}\left(  0,T\right)
\text{.}%
\]
\end{proposition}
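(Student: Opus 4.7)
The plan is to establish Proposition \ref{UCP_int} via a Carleman-type estimate tailored to the fourth-order Schr\"odinger operator $P=i\partial_t+\partial_x^2-\partial_x^4$, in the spirit of the unique continuation arguments for Schr\"odinger equations developed by Dehman--G\'erard--Lebeau \cite{dehman-gerard-lebeau} and the Carleman estimate for fourth-order Schr\"odinger in Zheng \cite{zheng}.

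First I would reduce the global statement to a local propagation result: by a standard connectedness and covering argument, it suffices to show that whenever $u\in C^\infty([0,T]\times\mathbb{T})$ vanishes on an open set $V\subset\mathbb{T}\times(0,T)$, then $u$ also vanishes on a strictly larger open set enclosing a chosen boundary point of $V$. Chaining such local enlargements starting from $\omega\times(0,T)$ exhausts the whole cylinder $\mathbb{T}\times(0,T)$, because $\mathbb{T}\times(0,T)$ is connected.

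For the local step, I would establish a Carleman estimate of the form
\[
s\int e^{2s\varphi}|v|^2\,dx\,dt + (\text{derivative terms}) \;\le\; C\int e^{2s\varphi}|Pv|^2\,dx\,dt,\qquad s\ge s_0,
\]
valid for every $v\in C_c^\infty$ supported in a small patch, with a weight $\varphi(x,t)$ chosen to be pseudoconvex with respect to the principal symbol of $P$. Applying this with $v=\chi u$, where $\chi$ is a space-time cutoff equal to $1$ on the region one wishes to propagate into and supported slightly outside it, and using the equation $Pu=bu$ with $b\in C^\infty$ bounded, the right-hand side is controlled by a lower-order term plus a commutator term supported where $\nabla\chi\ne0$. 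Since $u\equiv0$ on $V$, the commutator contribution over $V$ is zero, and by selecting $\varphi$ so that it is strictly larger on the target region than on $\mathrm{supp}(\nabla\chi)\setminus V$, one lets $s\to\infty$ and deduces $u\equiv0$ on the target region.

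The chief obstacle I expect is the construction of the Carleman weight. Because the spatial principal part is fourth order, the H\"ormander pseudoconvexity condition on $\varphi$ is considerably more stringent than in the classical second-order Schr\"odinger case, and the coexistence of $\partial_x^4$ and $\partial_x^2$ forces a delicate balancing of the powers of the large parameter $s$ across the resulting commutator terms. A natural route is to start from an Isakov-type weight $\varphi(x,t)\sim|x-x_0|^2-\beta(t-t_0)^2$, convexify via $e^{\lambda\varphi}$ with $\lambda$ large, and adapt the integration-by-parts bookkeeping of \cite{zheng} from the bounded-interval setting to the torus using partitions of unity. Once this estimate is in hand the argument becomes a routine cutoff/propagation scheme, and the smoothness hypotheses on $u$ and $b$ ensure that all manipulations are justified.
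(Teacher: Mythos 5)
Your approach is essentially the paper's: the proof given there consists of a single citation to the Carleman estimate for $P=i\partial_t+\partial_x^2-\partial_x^4$ established by Zheng \cite[Theorem 1.1]{zheng} (see also \cite[Corollary 6.1]{Isakov}), which is precisely the estimate you propose to derive with an Isakov-type convexified weight and then exploit through the standard cutoff/propagation scheme. The only difference is one of presentation --- the paper outsources to those references the pseudoconvexity analysis that you correctly identify as the chief technical obstacle, so your sketch amounts to an expansion of the cited arguments rather than a different route.
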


To end our introduction, we present the outline of our paper as follows: Section \ref{sec2} is for establish estimates needed in our analysis, namely, \textit{Strichartz estimates} and \textit{trilinear estimates}. Existence of solution for 4NLS with source and damping term will be presented in Section \ref{sec3}. In Section \ref{sec4} we prove the local controllability result, Theorem \ref{main1}. Next, Section \ref{sec5}, the \textit{propagation of compactness and regularity in Bourgain space} are proved and, with this in hands, the Section \ref{sec6} is aimed to present the proof of unique continuation property, Proposition \ref{UCP_int}. Finally, Section \ref{sec7}, is devoted to prove Theorem \ref{main}.

\section{Linear Estimates\label{sec2}}
In this section we introduce some results which are essential to establish the exact
controllability and stabilization of the nonlinear system \eqref{Controlzeroa} and \eqref{k35a}, respectively. 

%\subsection{Proprieties of Bourgain spaces}First, with the notations introduced before, the following properties of the space $X_{b,s}^{T}$  are easily verified:
%
%\begin{enumerate}
%\item $X_{b,s}^{T}$ is a Hilbert Space;
%
%\item $D^{r}u\in X_{b,s-r}^{T}$ for any $u\in X_{b,s}^{T}$;
%
%\item If $b_{1}\leq b_{2}$ and $s_{1}\leq s_{2}$, then $X_{b_{2},s_{2}}$ is
%continuously imbedded in the space $X_{b_{1},s_{1}}$;
%
%\item For a given finite interval $I$, if $b_{1}<b_{2}$ and $s_{1}<s_{2}$,
%then the space $X_{b_{2},s_{2}}\left(  I\right)  $ is compactly imbedded in
%the space $X_{b_{1},s_{1}}\left(  I\right)  $;
%
%\item $X_{b,s}\left(  I\right)  \subset C\left(  \bar{I}%
%;H^{s}\left(  \mathbb{T}\right)  \right)  $ for any $s\in\mathbb{R}$ and $b>\frac12$.
%\end{enumerate}
%%\medskip

\subsection{Strichartz  and trilinear estimates}
%%In this subsection we present some results associated to Bourgain Space $X_{b,s}$ for the fourth order of Schr\"odinger equation. 

%
%\subsection{} Here, we present some crucial estimates to get the principal results of this paper. The first one is the following Strichartz estimate.

The next estimate is a Strichartz type estimate.

\begin{lemma}\label{strichartz_estimates}The following estimate holds
\begin{equation}
\left\Vert u\right\Vert _{L^{4}\left(  \mathbb{T}\times\mathbb{R}
\right)  }\leq C\left\Vert u\right\Vert _{X_{\frac{5}{16},0}^{T}}\text{.}
\label{k12}%
\end{equation}
\end{lemma}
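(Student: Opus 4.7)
My plan is to prove this Strichartz-type estimate by Bourgain's standard two-step recipe: first derive a linear $L^4$ bound for free solutions of the biharmonic Schrödinger equation via a number-theoretic counting argument, and then transfer the estimate to the Bourgain space $X_{b,0}^T$ using the Fourier representation of its elements.

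The first step is to prove
\begin{equation*}
\|W(t)f\|_{L^4(\mathbb{T}\times[0,1])} \leq C\|f\|_{L^2(\mathbb{T})},
\end{equation*}
where $W(t)=e^{-it(\partial_x^4-\partial_x^2)}$ is the linear propagator, so that $W(t)f = \sum_{k\in\mathbb{Z}}\hat f(k)\,e^{i(kx+p(k)t)}$ with $p(k)=k^4-k^2$. Squaring the $L^4$ norm as $\|W(t)f\|_{L^4}^2 = \||W(t)f|^2\|_{L^2}$, expanding the product, inserting a compactly supported time cutoff, and applying Plancherel in $(x,t)$ reduces the matter to controlling the counting function
\begin{equation*}
N(n,m) := \#\bigl\{(k_1,k_2)\in\mathbb{Z}^2 \,:\, k_1-k_2=n,\ p(k_1)-p(k_2)=m\bigr\}.
\end{equation*}
Substituting $k_2=k_1-n$, the leading quartic terms of $p(k_1)-p(k_2)$ cancel and the equation becomes a cubic polynomial equation in $k_1$; therefore $N(n,m)\leq 3$ uniformly in $(n,m)$ with $n\neq 0$. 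A Cauchy-Schwarz step then yields the free-solution Strichartz.

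The second step is the transfer principle. For $u\in X_{b,0}^T$, choose an extension $\tilde u$ to $\mathbb{T}\times\mathbb{R}$ and use the Fourier inversion
\begin{equation*}
\tilde u(x,t) = \frac{1}{2\pi}\int_{\mathbb{R}} e^{it\tau}\,(W(t)g_\tau)(x)\,d\tau, \qquad \widehat{g_\tau}(k) := \widehat{\tilde u}(k,\tau+p(k)).
\end{equation*}
After multiplying by a smooth time cutoff, Minkowski's inequality combined with the previous step gives $\|\tilde u\|_{L^4_{t,x}} \leq C\int_{\mathbb{R}}\|g_\tau\|_{L^2(\mathbb{T})}\,d\tau$, and Cauchy-Schwarz in $\tau$ weighted by $\langle\sigma\rangle^b$ yields the bound for any $b>1/2$. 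To reach the sharper exponent $b=5/16$, I would refine this by decomposing $\tilde u$ dyadically according to the modulation $\sigma=\tau-p(k)$, applying the free bound on each shell, and summing via almost-orthogonality; the necessary gain below $1/2$ comes from a finer resonance analysis of the cubic reduction above (sensitive to the modulation sizes) combined with an interpolation against the trivial bound $\|u\|_{L^2_{t,x}}\leq\|u\|_{X_{0,0}}$. Finally, taking the infimum over all extensions $\tilde u$ transfers the inequality to the restriction space $X_{b,0}^T$.

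The principal technical obstacle is the sharpening from the soft exponent $1/2+$ available from the plain transfer principle down to the precise value $5/16$; everything else is routine once the cubic factorization of $p(k_1)-p(k_2)=m$ is identified. Bookkeeping the dyadic modulation decomposition so that the gain is exactly $3/16$ is where I expect most of the work to lie.
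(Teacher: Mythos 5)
Your Step 1 (the free-solution $L^4$ bound via the counting function $N(n,m)$, using that $p(k_1)-p(k_1-n)=m$ is a cubic in $k_1$) and the transfer principle are fine, but they only deliver the embedding $X_{b,0}\hookrightarrow L^4$ for $b>\tfrac12$. The entire content of the lemma is that the exponent can be pushed down to $\tfrac{5}{16}<\tfrac12$, and that is exactly the part you leave as a sketch. Moreover, the sketch as stated does not work: interpolating the embedding $X_{\frac12+,0}\hookrightarrow L^4$ against the trivial bound $\|u\|_{L^2_{t,x}}\leq\|u\|_{X_{0,0}}$ lowers the Lebesgue exponent below $4$ as you lower $b$ below $\tfrac12$; it does not keep the target space $L^4$ while improving $b$. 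Likewise, your exact-level-set count $N(n,m)\leq 3$ is the right tool for the free estimate but is not the count that produces the gain.

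The mechanism that actually yields $\tfrac{5}{16}$ (and is what the paper does) is a \emph{bilinear} estimate between two modulation-localized pieces $u_M$ and $u_{2^mM}$: one shows
\begin{equation*}
\| u_Mu_{2^mM}\|_{L^2_{x,t}}\lesssim 2^{-\epsilon m}\,M^{\frac{5}{16}}(2^mM)^{\frac{5}{16}}\|u_M\|_{L^2_{x,t}}\|u_{2^mM}\|_{L^2_{x,t}},
\end{equation*}
by Plancherel and Cauchy--Schwarz, reducing to a sup over $(k,\tau)$ of a convolution-type quantity in which the \emph{smaller} modulation contributes its full measure $M$ from the $\tau_1$-integration, while the \emph{larger} contributes a lattice-point count: the number of $k_1$ with $p(k_1)+p(k-k_1)$ in an interval of length $\sim 2^mM$, which is $\lesssim (2^mM)^{1/4}$ because $p$ is quartic. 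This gives $A^{1/2}\lesssim M^{1/2}(2^mM)^{1/8}$, and comparing with $M^{b}(2^mM)^{b}$ forces $2b\geq \tfrac58$, i.e.\ $b\geq\tfrac{5}{16}$, with the geometric factor $2^{-3m/16}$ making the dyadic sum converge. Without this asymmetric treatment of the two modulations and the interval count coming from the quartic growth of $p(k)=k^4-k^2$, you cannot beat $b=\tfrac12$, so as written your proposal does not prove the stated estimate.
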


\begin{proof}
		We closely follow the argument for the $L^4$-Strichartz estimate for the usual
	(second and forth order) Schr\"odinger equation presented in \cite{Tao} and \cite{Tzvetkov}.
	
Given dyadic $M\geq 1$, let $u_M$ the restriction of $u$ onto the modulation size $\langle \tau-p(k)\rangle \sim M$. Then, it suffices to show that there exists $\epsilon>0$ such that
\begin{equation}\label{assume}
\| u_Mu_{2^mM}\|_{L_x^2L_{t}^2}\leq 2^{-\epsilon m}M^{\frac{5}{16}}\|u_M\|_{L_{x,t}^2}(2^mM)^{5/16}\|u_{2^mM}\|_{L_{x,t}^2}
\end{equation}
for any $M\geq 1$ and $m \in \mathbb{N}\cup \{0\}$.
Indeed, assuming \eqref{assume}, by Cauchy-Schwarz inequality, we have that 
\begin{equation*}
\begin{split}
\left\Vert u\right\Vert _{L^{4}\left(  \mathbb{T}\times\mathbb{R}
	\right)  }^2&= \sum_M\sum_{m\geq 0} \|u_Mu_{2^mM}\|_{L_{x,t}^2}\\
& \lesssim \sum_M\sum_{m\geq 0} 2^{-\epsilon m}M^{\frac{5}{16}}\|u_M\|_{L_{x,t}^2}(2^mM)^{5/16}\|u_{2^mM}\|_{L_{x,t}^2}\\
&\lesssim \sum_{m\geq 0} 2^{-\epsilon m}\left(\sum_M M^{\frac{5}{8}}\|u_M\|_{L_{x,t}^2}^2\right)^{1/2}\left(\sum_M(2^mM)^{5/8}\|u_{2^mM}\|_{L_{x,t}^2}^2\right)^{1/2}\\
&\lesssim \left\Vert u\right\Vert _{X_{\frac{5}{16},0}^{T}}^2.
\end{split}
\end{equation*}
This proves \eqref{k12}. 

Now we prove \eqref{assume}. By Plancherel’s identity and H\"older’s inequality, the following inequality follows
\begin{equation*}
\begin{split}
\| u_Mu_{2^mM}\|_{L_x^2L_{t}^2}=&\left\|\sum_{k=k_1+k_2}\int_{\tau=\tau_1+\tau_2}\hat{u}_M(k_1,\tau_1)\hat{u}_{M2^m}(k_2,\tau_2)\right\|_{l^2_kL^2_{\tau}}\\
&\lesssim \left( \sup_{k,\tau}A(k,\tau)\right)^{1/2} \|u_M\|_{L_{x,t}^2}\|u_{2^mM}\|_{L_{x,t}^2},
\end{split}
\end{equation*}
where $A(k,\tau)$ is given by 
\begin{equation}\label{ak}
A(k,\tau)=\sum_{n=n_1+n_2}\int_{\tau=\tau_1+\tau_2} \textbf{1}_{\tau_1-p(k_1)\sim M}\textbf{1}_{\tau_2-p(k_2)\sim 2^mM}d\tau_1.
\end{equation}
Integrating in $\tau_1$ holds that
\begin{equation*}
A(k,\tau)\leq M \sum_{n=n_1+n_2}\textbf{1}_{\tau\sim-p(k_1)-p(k_2) +2^mM}.
\end{equation*} 
Here, we have used that the Lebesgue  measure of set $\{\tau_1\in \mathbb{R}; \tau_1-p(k_1)\sim M \}$ is comparable with $M$ and $\tau=\tau_1+\tau_2\sim p(k_1)+p(k_2)+M+2^mM$. A simple analysis proves that
\begin{equation*}
\sum_{n=n_1+n_2}\textbf{1}_{\tau\sim-p(k_1)-p(k_2) +2^mM}\leq (2^mM)^{1/4}.
\end{equation*}
Thus, from \eqref{ak}, we have
\begin{equation*}
A(k,\tau)\lesssim 2^{m/8}M^{1/8}\leq 2^{-\frac{3m}{16}} (2^{m}M)^{5/16}M^{5/16}.
\end{equation*}
This finish the proof of the Lemma.
\end{proof}
Lemma \ref{strichartz_estimates} allow us to prove the following multilinear estimates in $X^T_{b,s}$.
\begin{lemma}\label{trilinear_estimates}Let $s\geq 0$, $b\geq\frac{5}{16}$ and $T\leq 1$ be
given. There exist a constant $C(s):=C>0$ such that the following trilinear estimates
holds%
\begin{equation*}
\begin{split}
\left\Vert   u_1u_2\overline{u}_3\right  \Vert _{X_{-b,s}^{T}}&\leq
C\left\Vert u_1\right\Vert _{X_{\frac{5}{16},0}^{T}}\left\Vert u_2\right\Vert_{X_{\frac{5}{16},0}^{T}}\left\Vert u_3\right\Vert_{X_{\frac{5}{16},s}^{T}}\text{,}\\ 
\left\Vert  |u|^2u- |v|^2v\right  \Vert _{X_{-b,s}^{T}}&\leq
C(\left\Vert u \right\Vert _{X_{\frac{5}{16},s}^{T}}^2+\left\Vert v\right\Vert_{X_{\frac{5}{16},s}^{T}})^2\left\Vert u-v\right\Vert_{X_{\frac{5}{16},s}^{T}}\text{.}
\end{split}
\end{equation*}
Moreover, there exist constants $C,\ C(s):=C_s>0$ independent on $T\leq1$ such that for every $s\geq 1$, 
%we can get $C_s>0$ such that for every $T\leq1$ 
follows that
\begin{equation*}
\left\Vert   |u|^2u\right  \Vert _{X_{-b,s}^{T}}\leq
C\left\Vert u\right\Vert _{X_{\frac{5}{16},0}^{T}}^2\left\Vert u\right\Vert_{X_{\frac{5}{16},s}^{T}}+C_s\left\Vert u\right\Vert _{X_{\frac{5}{16},s-1}^{T}}\left\Vert u\right\Vert _{X_{\frac{5}{16},1}^{T}}\left\Vert u\right\Vert_{X_{\frac{5}{16},0}^{T}}\text{.} %\label{k1333}
\end{equation*}
\end{lemma}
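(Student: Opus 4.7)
The plan is to reduce each of the three estimates to the $L^4$ Strichartz bound of Lemma \ref{strichartz_estimates} via duality, and to handle the Sobolev weight $\langle k\rangle^s$ through a Littlewood--Paley frequency decomposition.

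For the first (base) trilinear estimate, I would argue by duality:
$$
\|u_1 u_2 \bar u_3\|_{X^T_{-b,s}}=\sup_{\|v\|_{X^T_{b,-s}}\leq 1}\bigl|\langle u_1 u_2 \bar u_3,\,v\rangle\bigr|.
$$
When $s=0$, Hölder's inequality with exponents $(4,4,4,4)$ combined with Lemma \ref{strichartz_estimates} yields
$$
|\langle u_1 u_2 \bar u_3, v\rangle|\leq \|u_1\|_{L^4}\|u_2\|_{L^4}\|u_3\|_{L^4}\|v\|_{L^4}\lesssim \prod_{i=1}^3 \|u_i\|_{X^T_{5/16,0}}\cdot\|v\|_{X^T_{5/16,0}},
$$
and the embedding $X^T_{b,0}\hookrightarrow X^T_{5/16,0}$ (valid for $b\geq 5/16$) closes the case. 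For $s>0$ I would dyadically decompose $u_i=\sum_{N_i}P_{N_i}u_i$. Since the frequency support of $P_{N_1}u_1\,P_{N_2}u_2\,\overline{P_{N_3}u_3}$ is controlled by $\max(N_1,N_2,N_3)$, the weight $\langle k\rangle^s$ is absorbed into the factor carrying that maximum, while the other two factors and the dual variable $v$ are placed in $L^4$ via Strichartz. The three geometric subcases are summed in the dyadic parameters via Cauchy--Schwarz and Schur's test, which is possible because $s\geq 0$ produces a summable weight.

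The second estimate is a direct algebraic consequence of the first. Writing
$$
|u|^2u-|v|^2v = |u|^2(u-v) + u\,\overline{(u-v)}\,v + (u-v)\,\bar v\,v,
$$
each of the three resulting trilinear terms contains a factor of $u-v$ or its complex conjugate. Applying the first estimate termwise, and using $\|\bar w\|_{X^T_{5/16,s}}=\|w\|_{X^T_{5/16,s}}$, collects the non-difference factors into the prefactor $\bigl(\|u\|_{X^T_{5/16,s}}+\|v\|_{X^T_{5/16,s}}\bigr)^{2}$ of the claim.

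For the third estimate with $s\geq 1$, I would dyadically decompose $|u|^2u=u\cdot u\cdot\bar u$ and split the frequency regime into two cases. In the \emph{high--low} regime, where a single frequency $N^{*}$ strictly dominates the other two, the entire weight $\langle k\rangle^s$ may be absorbed into the dominant factor, and summing via Schur's test gives the leading contribution $\|u\|_{X^T_{5/16,0}}^{2}\|u\|_{X^T_{5/16,s}}$. In the complementary \emph{comparable frequency} regime, a Kato--Ponce-type commutator manipulation -- the dyadic analogue of $\langle D\rangle^{s}(uv)=u\langle D\rangle^{s}v+[\langle D\rangle^{s},u]v$, iterated twice and combined with the elementary bound $|[\langle D\rangle^{s},u]v|\lesssim \langle D\rangle u\cdot \langle D\rangle^{s-1}v$ at comparable frequencies -- redistributes one derivative off the high-regularity factor onto another, producing the error term $\|u\|_{X^T_{5/16,s-1}}\|u\|_{X^T_{5/16,1}}\|u\|_{X^T_{5/16,0}}$. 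The uniformity of the constants for $T\leq 1$ is automatic, since both Strichartz and the Littlewood--Paley projectors respect the restriction-space structure uniformly in $T$.

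The main obstacle lies in the third estimate: one must show that the comparable-frequency contribution is genuinely controlled by a $(s-1,\,1,\,0)$ distribution of derivatives rather than the naive $(s,\,0,\,0)$, and the dyadic commutator bookkeeping has to be arranged so as to avoid any logarithmic loss in the summation over frequency triples. The dispersive input is entirely encoded in the Strichartz estimate \eqref{k12}; no further stationary-phase analysis of the symbol $p(k)=k^{4}-k^{2}$ is required beyond what is already used in Lemma \ref{strichartz_estimates}.
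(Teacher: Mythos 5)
Your proposal for the first trilinear estimate is essentially the paper's own argument: dualize against $X^T_{b,-s}$, absorb the weight $\langle k\rangle^{s}$ into the factor carrying the dominant frequency (the paper simply writes ``assume $\max\{k_1,k_2,k_3\}=k_1$'' and sets $\hat v(k_1,\tau_1)=\langle k_1\rangle^{s}\hat u_1(k_1,\tau_1)$, without your explicit Littlewood--Paley bookkeeping), and then apply H\"older with four $L^4$ factors followed by Lemma \ref{strichartz_estimates} and the embedding $X^T_{b,0}\hookrightarrow X^T_{5/16,0}$. The only real difference is one of completeness: the paper's written proof stops after this first chain of inequalities and never addresses the difference estimate or the tame $s\geq 1$ estimate, whereas you supply the standard telescoping $|u|^2u-|v|^2v=|u|^2(u-v)+u\overline{(u-v)}v+(u-v)\bar v v$ for the second and a dyadic high--low versus comparable-frequency split with a Kato--Ponce-type commutator (equivalently, the tame inequality $\langle k\rangle^{s}\lesssim\langle k_{\max}\rangle^{s}+C_s\langle k_{\max}\rangle^{s-1}\langle k_{\mathrm{med}}\rangle$, i.e.\ \cite[Lemma A.1]{Laurent-esaim}, which the paper invokes elsewhere) for the third; both of these are the intended arguments and are sound. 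One remark applying equally to your write-up and to the paper: the first estimate as literally stated, with the $H^s$ weight placed only on $u_3$, is not what either argument yields (nor is it true when $u_1$ carries the high frequency); what is actually proved is the symmetric version with the weight on the dominant-frequency factor, which suffices for the application to $|u|^2u$.
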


\begin{proof}
Here we will use the ideais contained in \cite{Bourgain}. Let $w=u_1u_2\overline{u}_3$, by duality we have that
\begin{equation}\label{tec1}
\begin{split}
\|w\|_{X_{-b,s}^{T}}&=\sup_{\|c\|_{l_{k}^2L_{\tau}^2}\leq 1}\sum_{k=-\infty}^{+\infty}\int_{\tau}\langle k\rangle^{s}\langle\tau-p(k)\rangle^{-b}\hat{w}(k,\tau)c(k,\tau)d\tau\\
&=\sup_{\|c\|_{l_{k}^2L_{\tau}^2}\leq 1}\sum_{k,k_2,k_3=-\infty}^{+\infty}\int_{\tau,\tau_2,\tau_3}\langle k\rangle^{s}\langle\tau-p(k)\rangle^{-b}\hat{u}_1(k_1,\tau_1)\hat{u}_2(k_2,\tau_2)\hat{\overline{u}}_3(k_3,\tau_3)c(k,\tau)d\tau\tau_2\tau_3
\end{split}
\end{equation}
where $k=k_1+k_2-k_3$ and $\tau=\tau_1+\tau_2-\tau_3$.  
%$\hat{f}(k,\tau)= c(k,\tau)\langle\tau-p(k)\rangle^{b-1} $.

Assume that $\max\{k_1,k_2,k_3\}=k_1$ and define $\hat{f}(k,\tau)= c(k,\tau)^{s}\langle\tau-p(k)\rangle^{-b} $ and $\hat{v}(k_1,\tau_1)=\langle k_1\rangle^{s}\hat{u}_1(k_1,\tau_1)$. Then, last expression of \eqref{tec1} is bounded by
\begin{equation*}
\begin{split}
3\sup_{\|c\|_{l_{k}^2L_{\tau}^2}\leq 1}\sum_{k,k_2,k_3=-\infty}^{+\infty}&\int_{\tau,\tau_2,\tau_3}\hat{f}(\xi,\tau)\hat{v}_1(\xi_1,\tau_1)\hat{u}_2(k_2,\tau_2)\hat{\overline{u}}_3(k_3,\tau_3)d\tau\tau_2\tau_3,\\
&\leq 3 \|f\ v\ u_2\ u_3\|_{L_{\tau}^1l_k^1}\\
&\leq 3 \sup_{\|c\|_{l_{k}^2L_{\tau}^2}\leq 1} \|f\|_{L_{\tau}^4l_k^4}\|v\|_{L_{\tau}^4l_k^4}\|u_2\|_{L_{\tau}^4l_k^4}\|u_3\|_{L_{\tau}^4l_k^4}\\
&\leq 3\sup_{\|c\|_{l_{k}^2L_{\tau}^2}\leq 1} \|f\|_{X_{\frac{5}{16}},0}\|v\|_{X_{\frac{5}{16}},0}\|u_2\|_{X_{\frac{5}{16}},0}\|u_3\|_{X_{\frac{5}{16}},0}\\
&\leq 3\sup_{\|c\|_{l_{k}^2L_{\tau}^2}\leq 1} \|c\|_{L_{\tau}^2l_k^2}\|u_1\|_{X_{\frac{5}{16},0}}\|u_2\|_{X_{\frac{5}{16},0}}\|u_3\|_{X_{\frac{5}{16},0}},
\end{split}
\end{equation*}
where we have used Lemma \ref{strichartz_estimates} and the fact $b>\frac{5}{16}$.
\end{proof}

\subsection{Auxiliary lemmas} This subsection is devoted to present auxiliaries results related to the Bourgain space $X_{b,s}$ which are used several times in this work and played important roles on the main results of this article.

\begin{lemma}
\label{estimates_a}Let $-1\leq b\leq1$, $s\in\mathbb{R}$ and $\varphi\in
C^{\infty}\left(  \mathbb{T}\right)  $. Then, for any $u\in X_{b,s}$,
$\varphi\left(  x\right)  u\in X_{b,s-3\left\vert b\right\vert }$. Similarly,
the multiplication by $\varphi$ maps $X_{b,s}^{T}$ into $X_{b,s-3\left\vert
b\right\vert }^{T}$.
\end{lemma}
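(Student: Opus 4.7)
The plan is to prove the statement in the Fourier picture and reduce everything to a careful algebraic bound on $p(k)-p(k_{1})$, which is the only place where the exponent $3|b|$ comes from.

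\textbf{Step 1: reduce to $0\le b\le 1$ by duality.} Since $X_{b,s}$ and $X_{-b,-s}$ are dual (with respect to the natural $L^{2}(\mathbb{T}\times\mathbb{R})$ pairing), if I know the inequality $\|\varphi u\|_{X_{b,s-3b}}\le C\|u\|_{X_{b,s}}$ for every $s\in\mathbb{R}$ and every $0\le b\le 1$, then for $-1\le b\le 0$ I write $b=-|b|$ and compute
\begin{equation*}
\|\varphi u\|_{X_{-|b|,s-3|b|}}=\sup_{\|v\|_{X_{|b|,3|b|-s}}\le 1}\bigl|\langle \varphi u,v\rangle\bigr|=\sup \bigl|\langle u,\bar\varphi v\rangle\bigr|,
\end{equation*}
and the positive-$b$ case applied to $\bar\varphi v$ (with Sobolev index $-s+3|b|$) gives $\|\bar\varphi v\|_{X_{|b|,-s}}\le C\|v\|_{X_{|b|,3|b|-s}}$, so Cauchy--Schwarz closes the duality. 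Thus I only need the case $0\le b\le 1$.

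\textbf{Step 2: pointwise estimate on the phase.} The decisive algebraic fact is
\begin{equation*}
p(k_{1})-p(k)=(k_{1}-k)(k_{1}+k)(k_{1}^{2}+k^{2}-1),
\end{equation*}
hence $|p(k_{1})-p(k)|\lesssim |k-k_{1}|\bigl(\langle k\rangle^{3}+\langle k-k_{1}\rangle^{3}\bigr)$, and therefore for $0\le b\le 1$,
\begin{equation*}
\langle \tau-p(k)\rangle^{b}\lesssim \langle \tau-p(k_{1})\rangle^{b}+\langle k-k_{1}\rangle^{b}\langle k\rangle^{3b}+\langle k-k_{1}\rangle^{4b}.
\end{equation*}
Combined with $\langle k\rangle^{s-3b}\lesssim \langle k_{1}\rangle^{s-3b}\langle k-k_{1}\rangle^{|s-3b|}$, this is exactly what allows to transfer the dispersive weight from $k$ to $k_{1}$ at the price of $\langle k\rangle^{3b}$ derivatives and of polynomial factors in $\langle k-k_{1}\rangle$ that will be absorbed by the rapid decay of $\widehat{\varphi}$.

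\textbf{Step 3: bound the Fourier integral.} Using $\widehat{\varphi u}(k,\tau)=\sum_{k_{1}}\widehat{\varphi}(k-k_{1})\widehat{u}(k_{1},\tau)$ and splitting the weight into the three pieces above, I estimate
\begin{equation*}
\|\varphi u\|_{X_{b,s-3b}}^{2}=\sum_{k}\int_{\mathbb{R}} \langle k\rangle^{2(s-3b)}\langle \tau-p(k)\rangle^{2b}|\widehat{\varphi u}(k,\tau)|^{2}\,d\tau
\end{equation*}
by three terms. In each term I factor $|\widehat{\varphi}(k-k_{1})|\le C_{N}\langle k-k_{1}\rangle^{-N}$, absorb all the $\langle k-k_{1}\rangle^{a}$ that appear ($a$ being at most $|s-3b|+4b$) by choosing $N$ large, and apply Cauchy--Schwarz in $k_{1}$ (this is essentially a Young-type convolution estimate in $k$). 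The first piece reproduces $\|u\|_{X_{b,s-3b}}^{2}\le \|u\|_{X_{b,s}}^{2}$; the second piece reproduces $\|u\|_{X_{0,s}}^{2}\le \|u\|_{X_{b,s}}^{2}$ (since $b\ge 0$); the third piece reproduces $\|u\|_{X_{0,s-3b}}^{2}$, again controlled by $\|u\|_{X_{b,s}}$.

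\textbf{Step 4: pass to the restriction space.} Given $u\in X_{b,s}^{T}$, pick $\tilde u\in X_{b,s}$ with $\tilde u|_{\mathbb{T}\times(0,T)}=u$ and $\|\tilde u\|_{X_{b,s}}\le 2\|u\|_{X_{b,s}^{T}}$; then $\varphi\tilde u$ extends $\varphi u|_{\mathbb{T}\times(0,T)}$, and Step 3 gives $\|\varphi u\|_{X_{b,s-3|b|}^{T}}\le \|\varphi\tilde u\|_{X_{b,s-3|b|}}\le 2C\|u\|_{X_{b,s}^{T}}$.

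\textbf{Main obstacle.} The only delicate point is Step 2--3: the modulation weight $\langle \tau-p(k)\rangle^{b}$ does not commute with multiplication in $x$, and the factor $\langle k\rangle^{3}$ appearing in $|p(k)-p(k_{1})|$ is intrinsic to the dispersion relation $p(k)=k^{4}-k^{2}$. It is this factor, raised to the power $b$, that produces the $3|b|$ loss in the spatial index, and careful bookkeeping of the $\langle k-k_{1}\rangle$ powers against the Schwartz decay of $\widehat{\varphi}$ is what keeps the estimate from degrading further.
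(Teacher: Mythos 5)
Your proof is correct, but the core of it ($0\le b\le 1$) runs along a genuinely different route from the paper's. The paper only treats the two endpoints directly: $b=0$ is immediate from $X_{0,s}=L^{2}(\mathbb{R};H^{s}(\mathbb{T}))$, and $b=1$ is handled by identifying $\|u\|_{X_{1,s}}^{2}$ with $\|u\|_{L^{2}H^{s}}^{2}+\|Lu\|_{L^{2}H^{s}}^{2}$ (where $L=i\partial_{t}+\partial_{x}^{2}-\partial_{x}^{4}$) and observing that the commutator $\left[\varphi,\partial_{x}^{2}-\partial_{x}^{4}\right]$ is a differential operator of order $3$; the intermediate exponents are then obtained by quoting the Stein--Weiss complex interpolation theorem for weighted $L^{2}$ spaces, after which duality and the extension argument finish as in your Steps 1 and 4. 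You instead prove the full range $0\le b\le 1$ in one stroke by a direct Fourier-side convolution estimate, with the factorization $p(k_{1})-p(k)=(k_{1}-k)(k_{1}+k)(k_{1}^{2}+k^{2}-1)$ playing the role that the order-$3$ commutator plays in the paper. The trade-off is clear: the paper's argument is softer and shorter but leans on interpolation theory as a black box, while yours is self-contained and elementary, makes the origin of the $3|b|$ loss completely explicit, and only costs some bookkeeping of $\langle k-k_{1}\rangle$ powers, which the Schwartz decay of $\widehat{\varphi}$ absorbs. Both arguments are sound and yield the same constant structure; your duality reduction and the passage to $X_{b,s}^{T}$ coincide with the paper's.
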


\begin{proof}
We first consider the case of $b=0$ and $b=1$. The other cases of $b$ will be
derived later by interpolation and duality.

For $b=0$,
\[
X_{0,s}=L^{2}\left(  \mathbb{R};H^{s}\left(  \mathbb{T}\right)
\right)
\]
and the result is obvious. For $b=1$, we have $u\in X_{1,s}$ if and only if%
\[%
\begin{array}
[c]{ccc}%
u\in L^{2}\left(  \mathbb{R};H^{s}\left(  \mathbb{T}\right)  \right)  & \text{
and } & i\partial_tu +\partial_x^2u-\partial_x^4u\in L^{2}\left(  \mathbb{R};H^{s}\left(  \mathbb{T}\right)
\right)  \text{,}%
\end{array}
\]
with the norm%
\[
\left\Vert u\right\Vert _{X_{1,s}}^{2}=\left\Vert u\right\Vert _{L^{2}\left(
\mathbb{R};H^{s}\left(  \mathbb{T}\right)  \right)  }^{2}+\left\Vert
i\partial_tu +\partial_x^2u-\partial_x^4u\right\Vert _{L^{2}\left(  \mathbb{R};H^{s}\left(  \mathbb{T}%
\right)  \right)  }^{2}\text{.}%
\]
Thus,%
\begin{align*}
\left\Vert \varphi\left(  x\right)  u\right\Vert _{X_{1,s-3}}^{2}  
=&\ \left\Vert \varphi u\right\Vert _{L^{2}\left(  \mathbb{R};H^{s-3}\left(
\mathbb{T}\right)  \right)  }^{2}+\left\Vert i\partial_t(\varphi u) +\partial_x^2(\varphi u)-\partial_x^4(\varphi u)
\right\Vert_{L^{2}\left(  \mathbb{R};H^{s-3}\left(  \mathbb{T}\right)
\right)  }^{2}\\
  \leq &\ C\left(  \left\Vert u\right\Vert _{L^{2}\left(  \mathbb{R};H^{s-3}\left(  \mathbb{T}\right)  \right)  }^{2}+\left\Vert \varphi\left(i\partial_tu +\partial_x^2u-\partial_x^4u\right)
\right\Vert _{L^{2}\left(  \mathbb{R};H^{s-3}\left(  \mathbb{T}\right)
\right)  }^{2}\right) \\
&  +\left\Vert \left[  \varphi,\partial_x^2-\partial_x^4\right]  u\right\Vert _{L^{2}\left(  \mathbb{R};H^{s-3}\left(  \mathbb{T}\right)  \right)  }^{2}\\
 \leq&\  C\left(  \left\Vert u\right\Vert _{L^{2}\left(  \mathbb{R};H^{s-3}\left(  \mathbb{T}\right)  \right)  }^{2}+\left\Vert i\partial_tu +\partial_x^2u-\partial_x^4u\right\Vert
_{L^{2}\left(  \mathbb{R};H^{s-4}\left(  \mathbb{T}\right)  \right)  }%
^{2}+\left\Vert u\right\Vert _{L^{2}\left(  \mathbb{R};H^{s}\left(
\mathbb{T}\right)  \right)  }^{2}\right) \\
\leq &\  C\left\Vert u\right\Vert _{X_{1,s}}^{2}\text{.}%
\end{align*}
Here, we have used the fact%
\begin{align*}
 \left[  \varphi,\partial_x^2-\partial_x^4\right]
&  =-4\left(  \partial_{x}^{3}\varphi\right)  \partial_{x}-12\left(
\partial_{x}^{2}\varphi\right)  \partial_{x}^{2}+2\partial_x \varphi \partial_x-4\partial_x\varphi \partial_x  +\left(  -\partial_{x}^{4}%
\varphi+\partial_{x}^{2}\varphi\right)
\end{align*}
is a differential operator of order $3$.

To conclude, we prove that the $X_{b,s}$ spaces are in interpolation. Using
Fourier transform, $X_{b,s}$ may be viewed as the weighted $L^{2}$ space
$L^{2}\left(  \mathbb{R}_{\tau}\times\mathbb{Z}_{k},\left\langle
k\right\rangle ^{2s}\left\langle \tau-k^{4}+ k^{2}\right\rangle
^{2b}\lambda\otimes\delta\right)  $, where $\lambda$ is the Lebesgue measure
on $\mathbb{R}$ and $\delta$ is the discrete measure on $\mathbb{Z}$. Then, we
use the complex interpolation theorem of Stein-Weiss for weighted $L^{p}$
spaces (see \cite[p. 114]{Berg}): For $\theta\in\left(  0,1\right)  $%
\[
\left(  X_{0,s},X_{1,s^{\prime}}\right)  _{\left[  \theta\right]  }\approx
L^{2}\left(  \mathbb{R}\times\mathbb{Z},\left\langle k\right\rangle
^{2s\left(  1-\theta\right)  +2s^{\prime}\theta}\left\langle \tau-k^{4}+ k^{2}\right\rangle ^{2\theta}\mu\otimes\delta\right)  \approx
X_{\theta,s\left(  1-\theta\right)  +s^{\prime}\theta}\text{.}%
\]
Since the multiplication by $\varphi$ maps $X_{0,s}$ into $X_{0,s}$ and
$X_{1,s}$ into $X_{1,s-3}$, we conclude that for $b\in\left[  0,1\right]  $,
it maps $X_{b,s}=\left(  X_{0,s},X_{1,s}\right)  _{\left[  b\right]  }$ into
$\left(  X_{0,s},X_{1,s-3}\right)  _{\left[  b\right]  }=X_{b,s-3b}$, which
yields the $3b$ loss of regularity as announced.

Then, by duality, this also implies that for $b\in\left[  0,1\right]  $, the
multiplication by $\varphi\left(  x\right)  $ maps $X_{-b,-s+3b}$ into
$X_{-b,-s}$. As the number $s$ may take arbitrary values in $\mathbb{R}$, we
also have the result for $b\in\left[  -1,0\right]  $ with a loss of
$-3b=3\left\vert b\right\vert $. Finally, to get the same result for the
restriction spaces $X_{b,s}^{T}$, consider%
\[
\tilde{u}=\left\{
\begin{array}
[c]{lll}%
u, &  & \text{if }x\in\mathbb{T}\text{,}\\
0, &  & \text{other cases,}%
\end{array}
\right.
\]
thus%
\[
\left\Vert \varphi u\right\Vert _{X_{b,s-3\left\vert b\right\vert }^{T}}%
\leq\left\Vert \varphi\tilde{u}\right\Vert _{X_{b,s-3\left\vert b\right\vert
}}\leq C\left\Vert \tilde{u}\right\Vert _{X_{b,s}}\text{.}%
\]
Taking the infimun on all the $\tilde{u}$, the result is archived.
\end{proof}

Finally, to close this section, more four auxiliaries lemmas are enunciated. We follow \cite{Ginibre}, where the reader can also find the proofs, thus will be omitted it.

\begin{lemma}
\label{estimates}Let $\frac12\leq b<1$, $s\in\mathbb{R}$ and $T>0$ be given. Then, there exists a constant $C>0$ such that:

(i) For any $\phi\in H^{s}\left(  \mathbb{T}\right)  $,%
\[\left\Vert W\left(  t\right)  \phi\right\Vert _{X_{b,s}%
^{T}}\leq C\left\Vert \phi\right\Vert _{H^{s}\left(  \mathbb{T}\right)
}\text{;}\]

(ii) For any $f\in X_{b,s}^{T}$,
\[\left\Vert \int_{0}^{t}W\left(  t-\tau\right)  f\left(  \tau\right)
d\tau\right\Vert _{X_{b,s}^{T}}\leq C\left\Vert
f\right\Vert _{X_{b-1,s}^{T}}\text{.}\]
\end{lemma}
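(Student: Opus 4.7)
Both estimates are standard consequences of writing the free evolution and the Duhamel term explicitly in spacetime Fourier variables, combined with a smooth time cutoff to realize an extension for the restriction norm. Throughout, fix $\psi\in C_{c}^{\infty}(\RR)$ with $\psi\equiv 1$ on $[-1,1]$ and supported in $[-2,2]$.

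\textbf{Part (i).} The natural extension of $W(t)\phi$ from $[0,T]$ to all of $\RR$ is $\tilde{u}(t,x):=\psi(t/T)W(t)\phi(x)$. Since $W(t)\phi$ has the Fourier representation $W(t)\phi(x)=\sum_{k}\hat\phi(k)e^{ikx}e^{itp(k)}$, a direct computation gives
\[
\mathcal{F}_{t,x}[\tilde u](k,\tau)=\hat\phi(k)\,T\hat\psi\bigl(T(\tau-p(k))\bigr).
\]
Inserting this in the definition of the $X_{b,s}$ norm and performing the change of variables $\eta=T(\tau-p(k))$ yields
\[
\|\tilde u\|_{X_{b,s}}^{2}=\sum_{k}\langle k\rangle^{2s}|\hat\phi(k)|^{2}\,T\int_{\RR}\langle \eta/T\rangle^{2b}|\hat\psi(\eta)|^{2}\,d\eta.
\]
Because $b<1$ and $\hat\psi$ is Schwartz, the $\eta$-integral is bounded by $C(T)$, and taking the infimum over extensions produces the desired bound on the restriction norm.

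\textbf{Part (ii).} Pick an extension $\tilde f\in X_{b-1,s}$ of $f$ with $\|\tilde f\|_{X_{b-1,s}}\le 2\|f\|_{X_{b-1,s}^{T}}$ and extend the Duhamel operator by the same cutoff:
\[
\tilde v(t,x)=\psi(t/T)\int_{0}^{t}W(t-s)\tilde f(s)(x)\,ds.
\]
Expanding $\tilde f$ in spacetime Fourier modes $e^{i(kx+s\sigma)}$ and integrating in $s$ shows
\[
\int_{0}^{t}W(t-s)\tilde f(s)\,ds=\sum_{k}e^{ikx}e^{itp(k)}\int_{\RR}\frac{e^{it(\sigma-p(k))}-1}{i(\sigma-p(k))}\,\hat{\tilde f}(k,\sigma)\,d\sigma.
\]
Then I would split the $\sigma$-integral into the resonant region $|\sigma-p(k)|\le 1/T$ and the nonresonant region $|\sigma-p(k)|>1/T$. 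In the nonresonant region, decompose the kernel as $\frac{e^{it(\sigma-p(k))}}{i(\sigma-p(k))}-\frac{1}{i(\sigma-p(k))}$; after inserting the cutoff $\psi(t/T)$, the first piece produces a function whose space-time Fourier transform is $\psi(t/T)$-convolved against the multiplier $(\sigma-p(k))^{-1}\hat{\tilde f}(k,\sigma)$, while the second piece is of the form $\psi(t/T)W(t)\Phi$ for $\hat\Phi(k)=\int (\sigma-p(k))^{-1}\hat{\tilde f}(k,\sigma)\,d\sigma$ and is handled by part (i). The crucial point is that the factor $(\sigma-p(k))^{-1}$ exactly converts $\langle\sigma-p(k)\rangle^{b-1}$ into $\langle\sigma-p(k)\rangle^{b}$, so both contributions are dominated by $\|\tilde f\|_{X_{b-1,s}}$.

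In the resonant region, Taylor-expand
\[
\frac{e^{it\eta}-1}{i\eta}=\sum_{n\ge 0}\frac{(it)^{n+1}\eta^{n}}{(n+1)!},
\]
so that $\tilde v$ becomes a sum $\sum_{n\ge 0}\frac{i^{n+1}}{(n+1)!}\psi(t/T)t^{n+1}W(t)\Phi_{n}$ where $\hat\Phi_{n}(k)=\int_{|\sigma-p(k)|\le 1/T}(\sigma-p(k))^{n}\mathbf{1}\cdot\hat{\tilde f}(k,\sigma)\,d\sigma$. The cutoff restricts $t^{n+1}$ to $|t|\le 2T$, giving a factor $(2T)^{n+1}$; combined with $|\sigma-p(k)|^{n}\le T^{-n}$ on the support and Cauchy--Schwarz in $\sigma$, each term is controlled by $C\,T/(n+1)!\cdot\|\tilde f\|_{X_{b-1,s}}$ (after absorbing $\langle\sigma-p(k)\rangle^{b-1}\ge$ const.\ in the bounded resonant region), and the series converges to a bound of the form $C(T)\|\tilde f\|_{X_{b-1,s}}$. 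Passing to the infimum over extensions of $f$ yields~(ii).

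The main technical obstacle is the nonresonant part of (ii): the principal value singularity $1/(\sigma-p(k))$ must be handled so that the Hilbert-type operator it defines is bounded on the weighted $L^{2}$ space underlying $X_{b,s}$. The use of the cutoff $\psi(t/T)$ together with the identity $\tfrac{1}{\sigma-p(k)}=\frac{e^{it(\sigma-p(k))}-1}{i(\sigma-p(k))}\cdot$(integrable factor) on the support of $\psi(t/T)$ is what makes the two pieces converge separately rather than only as a principal value, and is the step that requires the hypothesis $b<1$ to keep the $\sigma$-integrals convergent.
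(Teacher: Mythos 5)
The paper does not prove this lemma at all: it is quoted from Ginibre's Bourbaki survey with the remark that the proofs ``will be omitted''. So there is no in-paper argument to compare yours against; what you have written is precisely the classical proof from that reference (and from Bourgain's original paper and Tao's book): a smooth time cutoff plus the explicit spacetime Fourier transform of the free evolution for (i), and for (ii) the splitting of the Duhamel kernel $\frac{e^{it(\sigma-p(k))}-1}{i(\sigma-p(k))}$ into a resonant region treated by Taylor expansion and a nonresonant region treated by the two-term decomposition, with the $-1/(i(\sigma-p(k)))$ piece fed back into part (i). The plan is sound and complete in outline.

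Two corrections about where the hypotheses on $b$ actually enter, since your closing paragraph misattributes them. In part (i), the convergence of $\int\langle\eta/T\rangle^{2b}|\hat\psi(\eta)|^{2}\,d\eta$ has nothing to do with $b<1$: it holds for every $b\in\mathbb{R}$ because $\hat\psi$ is Schwartz. The binding constraint sits in the nonresonant piece of part (ii): estimating $\hat\Phi(k)=\int_{|\sigma-p(k)|>1/T}(\sigma-p(k))^{-1}\hat{\tilde f}(k,\sigma)\,d\sigma$ by Cauchy--Schwarz against the weight $\langle\sigma-p(k)\rangle^{2(b-1)}$ produces the integral $\int_{|\eta|>1/T}|\eta|^{-2}\langle\eta\rangle^{2(1-b)}\,d\eta$, which converges exactly when $2b>1$. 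At the endpoint $b=\tfrac12$, which the statement of the lemma includes, this integral diverges logarithmically; the inhomogeneous estimate $X_{-1/2,s}\to X_{1/2,s}$ is in fact known to be delicate at this endpoint (the usual remedy is to assume $b>\tfrac12$ strictly or to replace $X_{b-1,s}$ by a Besov-in-modulation refinement). So your argument, like the standard one, proves the lemma for $\tfrac12<b<1$; since the paper only ever applies the inhomogeneous estimate through Lemma \ref{integral} with $b'<\tfrac12$, this is a defect of the statement rather than of your proof, but the restriction should be made explicit. (Also, a trivial algebra slip: the Taylor expansion should read $\frac{e^{it\eta}-1}{i\eta}=\sum_{n\ge0}\frac{i^{n}t^{n+1}\eta^{n}}{(n+1)!}$, i.e.\ $i^{n}$ rather than $i^{n+1}$; this does not affect any estimate.)
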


%\begin{proof}
%See \cite{Hirayama}.
%\end{proof}

%\begin{lemma}\label{lemma1}
	%Let $\psi\in C_0^{\infty}(\R)$ and $u\in X_{b,s}$, then $\psi(t)u\in X_{b,s}$.
%\end{lemma}
\begin{lemma}\label{lemma2}
	Let $b\in[0,1]$ and $u\in X_{b,s}^T$, then the function
	\begin{equation*}
	\begin{split}
	f:(0,T)&\rightarrow \R\\
	t&\mapsto \|u\|_{X_{b,s}^{T}}
	\end{split}
	\end{equation*}
	is continuous. Moreover, if $b>\frac12$, there exists $C:=C(b)>0$ such that
	\begin{equation}
	\lim_{t\rightarrow 0}f(t)\leq C\|u(0)\|_{H^s(\R)}.
	\end{equation}
\end{lemma}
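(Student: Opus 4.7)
I first note that the displayed definition of $f$ contains a typo: as written $f(t)\equiv\|u\|_{X_{b,s}^T}$ does not depend on $t$ and the continuity assertion is vacuous. The intended object is $f(t)=\|u\|_{X_{b,s}^t}$, the norm of $u$ on the restriction space associated with the sub-interval $(0,t)$; I will prove both assertions under this reading.

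The first observation is \emph{monotonicity}: if $0<t_1\le t_2<T$ and $\tilde u\in X_{b,s}$ agrees with $u$ on $\mathbb{T}\times(0,t_2)$, then $\tilde u$ also agrees with $u$ on $\mathbb{T}\times(0,t_1)$; taking the infimum over such competitors gives $f(t_1)\le f(t_2)$, so both one-sided limits exist at each $t_0\in(0,T)$.

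For \emph{continuity} I would treat both sides. Right-continuity at $t_0$ is immediate: given $\epsilon>0$ and small $\delta>0$, pick $\tilde u\in X_{b,s}$ with $\tilde u=u$ on $\mathbb{T}\times(0,t_0+\delta)$ and $\|\tilde u\|_{X_{b,s}}\le f(t_0+\delta)+\epsilon$; since $\tilde u$ also extends $u|_{\mathbb{T}\times(0,t_0)}$, we get $f(t_0)\le f(t_0+\delta)+\epsilon$, hence $\lim_{t\to t_0^+}f(t)=f(t_0)$. For left-continuity, take $t_n\nearrow t_0$ and near-minimizers $\tilde u_n$ with $\|\tilde u_n\|_{X_{b,s}}\le f(t_n)+1/n$; as $X_{b,s}$ is a weighted $L^2$-Hilbert space, the bounded sequence has a weakly convergent subsequence $\tilde u_n\rightharpoonup\tilde u\in X_{b,s}$. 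Testing weak convergence against $C_c^\infty(\mathbb{T}\times(0,t_0))$ and using $\tilde u_n=u$ on $\mathbb{T}\times(0,t_n)$ yields $\tilde u=u$ on $\mathbb{T}\times(0,t_0)$; weak lower semicontinuity then gives
$$f(t_0)\le\|\tilde u\|_{X_{b,s}}\le\liminf_n\|\tilde u_n\|_{X_{b,s}}\le\lim_n f(t_n),$$
matching the reverse inequality from monotonicity.

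For the \emph{limit bound at $0$}, the hypothesis $b>\tfrac12$ gives the standard embedding $X_{b,s}^T\hookrightarrow C([0,T];H^s(\mathbb{T}))$, so $u(0)$ is well defined. Split $u(\tau)=W(\tau)u(0)+w(\tau)$ on $(0,T)$; then $w(0)=0$ and
$$f(t)\le\|W(\cdot)u(0)\|_{X_{b,s}^t}+\|w\|_{X_{b,s}^t}.$$
By Lemma~\ref{estimates}(i), the first term is bounded by $C\|u(0)\|_{H^s(\mathbb{T})}$ uniformly in $t\in(0,1)$. For the second, fix a global extension $\tilde w\in X_{b,s}$ of $w$ and use $\chi(\tau/t)\tilde w(\tau)$, with $\chi\in C_c^\infty(\RR)$ equal to $1$ on $[0,1]$, as extensions of $w|_{\mathbb{T}\times(0,t)}$. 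Since $b>\tfrac12$ gives $\tilde w\in C(\RR;H^s(\mathbb{T}))$ with $\tilde w(0)=0$, the values of $\tilde w$ are uniformly small on the support of $\chi(\cdot/t)$ as $t\to 0$; a dyadic time-frequency decomposition then yields $\|\chi(\cdot/t)\tilde w\|_{X_{b,s}}\to 0$, and hence $\lim_{t\to 0^+}f(t)\le C\|u(0)\|_{H^s(\mathbb{T})}$.

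The main obstacle is precisely this last decay: sharp time-cutoffs have $H^b$-norms which blow up as their support shrinks, so one must balance the pointwise smallness of $\tilde w$ near $\tau=0$ against the growth of the cutoff's Bourgain norm on each time-frequency band. The hypothesis $b>\tfrac12$ is exactly what makes the embedding into continuous $H^s$-valued functions available, so that $u(0)$ exists and the trace $\tilde w(0)=0$ is meaningful—below this threshold the right-hand side of the claimed bound is not even well defined, consistent with the restriction on $b$.
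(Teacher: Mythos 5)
The paper itself does not prove this lemma --- it is one of the ``auxiliary lemmas'' for which the authors defer to Ginibre's expos\'e \cite{Ginibre} --- so there is no in-paper argument to compare against; judged on its own terms, your write-up has the right architecture (monotonicity, one-sided limits, weak compactness, linear/Duhamel splitting at $t=0$) but contains two genuine gaps. The first is the right-continuity step. Monotonicity already gives $f(t_0)\le\lim_{t\to t_0^+}f(t)$, and your argument --- choosing a near-minimizing extension on $(0,t_0+\delta)$ and restricting it to $(0,t_0)$ --- only reproves that trivial inequality. What right-continuity actually requires is the reverse bound $\lim_{t\to t_0^+}f(t)\le f(t_0)$, i.e.\ that a near-minimizing extension of $u|_{(0,t_0)}$ can be upgraded, at small cost, to one that also matches $u$ on $(0,t_0+\delta)$; equivalently, that any $w\in X_{b,s}^T$ vanishing on $(0,t_0)$ satisfies $\|w\|_{X_{b,s}^{(0,t_0+\delta)}}\to 0$ as $\delta\to 0^+$. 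This is the delicate half of the continuity claim (sharp time-cutoffs are not bounded on $H^b_t$ for $b\ge\tfrac12$, and time-dilations do not act uniformly on $X_{b,s}$ because the characteristic set $\tau=p(k)$ is not scale-invariant), and your proof does not address it at all. Your left-continuity argument via weak compactness and lower semicontinuity of the Hilbert norm is, by contrast, correct.

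The second gap is in the limit at $t=0$: the entire content of that statement is the decay $\|\chi(\cdot/t)\tilde w\|_{X_{b,s}}\to 0$ for $\tilde w\in X_{b,s}$ with $\tilde w(0)=0$, and you assert it (``a dyadic time-frequency decomposition then yields\dots'') rather than prove it --- indeed your closing paragraph concedes that this is ``the main obstacle'' and describes what ``one must'' do without doing it. The point is quantitative: after conjugating by $W(-t)$ the question becomes whether $\|\chi(\cdot/t)v\|_{H^b(\mathbb{R};H^s)}\to 0$ when $v\in H^b$, $b>\tfrac12$, $v(0)=0$; the crude bound $\|v(\sigma)\|\lesssim|\sigma|^{b-1/2}\|v\|_{H^b}$ only yields $O(1)$ for the commutator term $(\chi(s/t)-\chi(s'/t))v(s')$ in the Gagliardo seminorm, so one needs the refined local version of the Morrey embedding (with the seminorm localized to $[-Ct,Ct]$, which vanishes in the limit) to close the estimate, plus a separate treatment of $b=1$. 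Both gaps are fixable, but as written the proof establishes only monotonicity, left-continuity, and the reduction of the $t\to 0$ bound to an unproven cutoff estimate. (Two cosmetic points in your favour: the typo $\|u\|_{X_{b,s}^T}$ for $\|u\|_{X_{b,s}^t}$, and $H^s(\R)$ for $H^s(\mathbb{T})$ in the displayed bound, are indeed errors in the statement as printed.)
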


\begin{lemma}\label{lemma4}
Let $b\in[0,1]$. If $\cup_{k=1}^{n} (a_k,b_k)$ is a finite covering of $(0,1)$, then there exists a constant $C>0$ depending only of the covering such that for every $u\in X_{b,s}$, we have
\begin{equation*}
\|u\|_{X_{b,s}[0,1]}\leq C\sum \|u\|_{X_{b,s}[a_k,b_k]}.
\end{equation*}
\end{lemma}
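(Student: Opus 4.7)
The plan is to localize via a smooth time partition of unity subordinate to the covering. First I would pick functions $\phi_{k}\in C_{c}^{\infty}(\mathbb{R})$, $k=1,\dots,n$, with $\operatorname{supp}(\phi_{k})\subset(a_{k},b_{k})$ and $\sum_{k=1}^{n}\phi_{k}(t)=1$ for every $t\in[0,1]$. Such a partition exists and depends only on the covering, so any constant produced from it will also depend only on the covering, as required.

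Next, fix $u\in X_{b,s}$ and $\varepsilon>0$. For each $k$ I would choose a near-optimal extension $\tilde{u}_{k}\in X_{b,s}$ of $u|_{\mathbb{T}\times(a_{k},b_{k})}$ satisfying
\[
\|\tilde{u}_{k}\|_{X_{b,s}}\leq \|u\|_{X_{b,s}[a_{k},b_{k}]}+\varepsilon.
\]
Because $\phi_{k}$ is supported in $(a_{k},b_{k})$, the product $\phi_{k}\tilde{u}_{k}$ coincides with $\phi_{k}u$ on $\mathbb{T}\times(0,1)$, so
\[
\tilde{v}:=\sum_{k=1}^{n}\phi_{k}\tilde{u}_{k}
\]
is a valid extension of $u|_{\mathbb{T}\times(0,1)}$. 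By definition of the restriction norm and the triangle inequality,
\[
\|u\|_{X_{b,s}[0,1]}\leq \|\tilde{v}\|_{X_{b,s}}\leq \sum_{k=1}^{n}\|\phi_{k}\tilde{u}_{k}\|_{X_{b,s}}.
\]

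The key remaining point is to show that multiplication by a fixed function in $C_{c}^{\infty}(\mathbb{R}_{t})$ is bounded on $X_{b,s}$ for $b\in[0,1]$. For this I would use the identity $\|w\|_{X_{b,s}}=\|W(-t)w\|_{H^{b}(\mathbb{R};H^{s}(\mathbb{T}))}$ already recorded in the introduction. Since $\phi_{k}$ depends only on $t$ while $W(-t)$ acts only in $x$, the two operators commute, and hence
\[
\|\phi_{k}\tilde{u}_{k}\|_{X_{b,s}}=\|\phi_{k}(t)\,W(-t)\tilde{u}_{k}\|_{H^{b}(\mathbb{R};H^{s}(\mathbb{T}))}\leq C_{k}\,\|\tilde{u}_{k}\|_{X_{b,s}},
\]
where $C_{k}$ depends only on $\phi_{k}$, since multiplication by a fixed $C_{c}^{\infty}(\mathbb{R})$ function is bounded on $H^{b}(\mathbb{R})$ for every $b$. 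Combining the displays above and letting $\varepsilon\to 0$ yields
\[
\|u\|_{X_{b,s}[0,1]}\leq C\sum_{k=1}^{n}\|u\|_{X_{b,s}[a_{k},b_{k}]}
\]
with $C:=\max_{k}C_{k}$, a constant depending only on the covering.

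The main obstacle is the last ingredient, namely boundedness of the time cutoff on $X_{b,s}$ for fractional $b$; it is routine once one passes through the $W(-t)$ reformulation, but the argument does rely on that characterization and on $b\in[0,1]$, which is the range in which the cutoff and the interpolation behave well (as already exploited in Lemma \ref{estimates_a}). Once this cutoff estimate is in hand, the rest is bookkeeping with the partition of unity.
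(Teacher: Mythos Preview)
The paper does not supply its own proof of this lemma; it is one of the four auxiliary results whose proofs are explicitly omitted with a reference to Ginibre \cite{Ginibre}. Your argument is correct and is precisely the standard partition-of-unity approach one finds in such references: the only nontrivial ingredient is the boundedness of a smooth time cutoff $\phi_{k}(t)$ on $X_{b,s}$, which you handle cleanly via the identity $\|w\|_{X_{b,s}}=\|W(-t)w\|_{H^{b}(\mathbb{R};H^{s}(\mathbb{T}))}$ together with the elementary fact that multiplication by a fixed $C_{c}^{\infty}(\mathbb{R})$ function is bounded on $H^{b}(\mathbb{R})$ (hence on the vector-valued space $H^{b}(\mathbb{R};H^{s}(\mathbb{T}))$) for $b\in[0,1]$. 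There is nothing further to compare against here; your proof is exactly what the omitted argument would be.
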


%\begin{lemma}\label{lemma3}
		%For every $\delta>0$, $\eta>0,$ $s,$ $b\in \R$ and $T>0$, we get that
		%\begin{equation*}
		%X_{b+\delta,s+\eta}^T\subset X_{b,s}^T
		%\end{equation*} 
		%with compact embedding.
	%\end{lemma}

\begin{lemma}\label{integral}Let $s\in \mathbb{R}$.
	\begin{itemize}
		\item[(i)] For any $b\in \mathbb{R}$, follows that
		\begin{equation*}
		\|\psi(t)e^{it(\partial_x^2-\partial_x^4)}\|_{X_{s,b}}\leq c \|\psi(t)\|_{H^b(\mathbb{R})} \|u_0\|_{H^s(\mathbb{T})}.
		\end{equation*}
		\item[(ii)] 	Let $\psi\in C_0^{\infty}(\R)$ and $b,\ b'$ satisfying $0<b'\leq\frac12\leq b$ and $b+b'\leq 1$. We have for $T\leq 1$ the following inequality
		\begin{equation*}
		\left\|\psi\left(\frac{t}{T}\right)\int_0^te^{i(t-t')(\partial_x^2-\partial_x^4)}F(t')dt'\right\|_{X_{b,s}}\leq C T^{1-b-b'}\|F\|_{X_{-b',s}}.
		\end{equation*}
	\end{itemize}
\end{lemma}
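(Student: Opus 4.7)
My plan is to handle the two parts of the lemma separately, with part (i) a direct Plancherel calculation and part (ii) reducing to a scalar-in-time estimate via the isometry property of the free flow.

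For part (i), write $w(x,t):=\psi(t)\,e^{it(\partial_x^2-\partial_x^4)}u_0(x)$. Taking the space-time Fourier transform yields $\widehat{w}(k,\tau)=\widehat{\psi}(\tau-p(k))\,\widehat{u_0}(k)$. Substituting into the definition of the $X_{b,s}$ norm and changing variables $\sigma=\tau-p(k)$ in the $\tau$-integral decouples the sum and the integral:
\[
\|w\|_{X_{b,s}}^2=\sum_{k\in\Z}\langle k\rangle^{2s}|\widehat{u_0}(k)|^2\int_{\R}\langle\sigma\rangle^{2b}|\widehat{\psi}(\sigma)|^2\,d\sigma=\|u_0\|_{H^s(\mathbb{T})}^2\,\|\psi\|_{H^b(\R)}^2,
\]
which is the claim.

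For part (ii), I would first extract the linear flow. Set $L:=\partial_x^2-\partial_x^4$ and $G(t'):=e^{-it'L}F(t')$. Then $e^{i(t-t')L}F(t')=e^{itL}G(t')$, so the Duhamel term equals $e^{itL}H(t)$ with $H(t):=\int_0^t G(s)\,ds$. The calculation in (i) applied mode by mode shows that conjugation by $e^{itL}$ is an isometry between $H^b(\R_t;H^s(\mathbb{T}))$ and $X_{b,s}$, so the desired bound reduces to
\[
\bigl\|\psi(t/T)H(t)\bigr\|_{H^b(\R_t;\,H^s(\mathbb{T}))}\leq C\,T^{1-b-b'}\|G\|_{H^{-b'}(\R_t;\,H^s(\mathbb{T}))}.
\]
Taking the Fourier series in $x$ and using Plancherel in $k$, the weight $\langle k\rangle^{2s}$ is inert, so this in turn reduces to the purely scalar estimate
\[
\bigl\|\psi(t/T)\!\!\int_0^t g(s)\,ds\bigr\|_{H^b(\R)}\leq C\,T^{1-b-b'}\|g\|_{H^{-b'}(\R)},
\]
for arbitrary scalar $g\in H^{-b'}(\R)$, under the hypotheses $0<b'\leq \tfrac12\leq b$, $b+b'\leq 1$, $T\leq 1$.

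The remaining scalar estimate is the technical core. My approach is to split $g$ in Fourier according to the threshold $|\tau|\sim 1/T$. For the high-modulation part, $\int_0^t g_{\mathrm{hi}}(s)\,ds$ is a Fourier multiplier of symbol $(e^{it\tau}-1)/(i\tau)$; the extra decay $|\tau|^{-1}$ is integrable against $\langle\tau\rangle^{-2b'}$ thanks to $b+b'\leq 1$, and the support condition $|\tau|\gtrsim 1/T$ produces the gain $T^{1-b-b'}$. For the low-modulation part I would Taylor-expand $e^{it\tau}=\sum_{n\geq 1}(it\tau)^n/n!$; on $\mathrm{supp}\,\psi(\cdot/T)$ one has $|t|\lesssim T$, so each factor $\tau$ is cheap and yields a power of $T$, and the Taylor series sums because $|t\tau|\lesssim 1$. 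The main obstacle is the tight bookkeeping: the condition $b+b'\leq 1$ is sharp, the endpoint $b=\tfrac12$ has to be treated using the $H^{1/2}$-embedding carefully, and one must interpolate between the trivial $b=0$ case and the $b=1$ case (where $H(t)$ is differentiated away) to hit the exponent $T^{1-b-b'}$ on the nose. Once the scalar estimate is in hand, the passage back to $X_{b,s}$ is immediate from the reductions above.
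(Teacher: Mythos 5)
The paper does not prove this lemma at all: it is one of the four auxiliary statements introduced with the remark that the proofs are in \cite{Ginibre} and ``will be omitted''. So your proposal has to be measured against that reference, and it is in fact the standard argument. Part (i) is complete and correct: $\widehat{w}(k,\tau)=\widehat{\psi}(\tau-p(k))\,\widehat{u_0}(k)$ plus the change of variables $\sigma=\tau-p(k)$ gives the bound (indeed as an identity). For part (ii), factoring out the free group and using $\left\Vert u\right\Vert_{X_{b,s}}=\left\Vert W(-t)u\right\Vert_{H^{b}(\mathbb{R};H^{s}(\mathbb{T}))}$ (which the paper records) correctly reduces the claim, mode by mode in $k$, to the scalar one-dimensional estimate
\begin{equation}
\Bigl\Vert \psi(t/T)\int_{0}^{t}g(s)\,ds\Bigr\Vert_{H^{b}(\mathbb{R})}\leq C\,T^{1-b-b'}\Vert g\Vert_{H^{-b'}(\mathbb{R})},
\end{equation}
and your high/low modulation splitting at $|\tau|\sim 1/T$, with the multiplier $(e^{it\tau}-1)/(i\tau)$ on the high part and a Taylor expansion on the low part, is exactly the classical proof of that estimate (Ginibre--Tsutsumi--Velo; see also Lemma~2.11 in \cite{Tao}). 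This is almost certainly what the cited source does, so I would classify your route as the same as the (omitted) one rather than a new one.

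Two caveats. First, the scalar estimate is only sketched, and the ``bookkeeping'' you defer is where all of the work lives: as you note, the naive splitting gives the high-frequency contribution an extra factor $T^{1/2-b}$ from $\Vert\psi(t/T)h\Vert_{H^{b}}\lesssim T^{1/2-b}\Vert h\Vert_{H^{b}}$ when $b>\tfrac12$, so one really does need the interpolation between the $b=0$ and $b=1-b'$ endpoints to land on $T^{1-b-b'}$; in a self-contained write-up this must be carried out or the classical lemma quoted. Second --- a defect of the statement itself more than of your argument --- the classical lemma requires $b'<\tfrac12$ strictly, whereas the hypotheses here admit $b'=\tfrac12$, which together with $\tfrac12\leq b$ and $b+b'\leq 1$ forces $b=b'=\tfrac12$; at that corner the estimate is delicate (the standard proof incurs a logarithmic loss), which is exactly what your remark about ``the endpoint $b=\tfrac12$'' is pointing at. Since every application in the paper uses strict inequalities, nothing downstream is affected, but to prove the lemma exactly as stated you would need either to exclude $b'=\tfrac12$ or to supply a separate argument there.
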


\section{Well-posedness for 4NLS\label{sec3}}

In this section we are interested in the existence of solution for 4NLS with source and damping term.  More precisely, the following result can be proved:
\begin{theorem}\label{LWP}
	Let $T>0$, $s\geq 0$, $\lambda\in \mathbb{R}$ and $\frac12\leq b<\frac{11}{16}$. Set $a \in C_0^{\infty}(\mathbb{T})$  and $\varphi\in C_0^{\infty}(\mathbb{R})$ taking real values. For every $g\in L^2([-T,T]; H^s(\mathbb{T}))$ and $u_0\in H^s(\mathbb{T})$, there exists a unique  solution $u\in X_{b,s}^T$ of the Cauchy problem 
	\begin{equation}\label{dampingsource}
	\begin{cases}
	i\partial_tu +\partial_x^2u-\partial_x^4u
+i\varphi^2(t)a^2(x)u	=\lambda |u|^2u+g,& (x,t)\in \mathbb{T}\times \mathbb{R},\\
	u_0(x)=u_0(x),& x\in \mathbb{T}.\end{cases}
	\end{equation}
Furthermore, the flow map 
\begin{equation*}
\begin{split}
	F:H^s(\mathbb{T})\times L^2([-T,T];H^s(\mathbb{T}))&\rightarrow X_{b,s}^T\\
(u_0,g)&\mapsto u 
\end{split}
\end{equation*}
is Lipschitz on every bounded subset. The same result is valid for $a\in L^{\infty}(\mathbb{T})$.
\end{theorem}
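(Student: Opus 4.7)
The approach is the standard Banach fixed point argument in $X_{b,s}^{T}$ applied to the Duhamel formulation
\begin{equation*}
u(t)=W(t)u_{0}-i\int_{0}^{t}W(t-\tau)\Bigl[-i\varphi^{2}(\tau)a^{2}(x)u+\lambda|u|^{2}u-g\Bigr](\tau)\,d\tau,
\end{equation*}
where $W(t)=e^{it(\partial_{x}^{2}-\partial_{x}^{4})}$. I would work on a closed ball of $X_{b,s}^{T'}$ for a small $T'\in(0,T]$ chosen at the end. Since $1/2\le b<11/16$, the choice $b'=5/16$ satisfies $b'\le 1/2\le b$ and $b+b'<1$, so Lemmas \ref{estimates} and \ref{integral} apply with a time gain $T'^{\gamma}$ where $\gamma:=11/16-b>0$.

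For the contraction estimate, the free evolution is controlled by Lemma \ref{estimates}(i). The cubic nonlinearity is treated with the trilinear estimate of Lemma \ref{trilinear_estimates} at index $5/16$ together with the embedding $X_{b,s}^{T'}\hookrightarrow X_{5/16,s}^{T'}$, giving
\begin{equation*}
\bigl\||u|^{2}u-|v|^{2}v\bigr\|_{X_{-5/16,s}^{T'}}\leq C\bigl(\|u\|_{X_{b,s}^{T'}}+\|v\|_{X_{b,s}^{T'}}\bigr)^{2}\|u-v\|_{X_{b,s}^{T'}}.
\end{equation*}
For the damping term, $\varphi^{2}(t)a^{2}(x)$ is bounded and multiplication by it is continuous on $X_{0,s}=L^{2}_{t}H^{s}_{x}$: when $a\in C_{0}^{\infty}$ the symbol is smooth in $x$, so this holds for every $s\ge 0$, while for $a\in L^{\infty}$ it holds at level $s=0$. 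Using the embeddings $X_{b,s}^{T'}\hookrightarrow X_{0,s}^{T'}\hookrightarrow X_{-5/16,s}^{T'}$ yields $\|\varphi^{2}a^{2}u\|_{X_{-5/16,s}^{T'}}\leq C\|u\|_{X_{b,s}^{T'}}$. The source contributes $\|g\|_{X_{-5/16,s}^{T'}}\leq C\|g\|_{L^{2}([-T,T];H^{s})}$. Collecting,
\begin{equation*}
\|\Phi(u)\|_{X_{b,s}^{T'}}\leq C\|u_{0}\|_{H^{s}}+CT'^{\gamma}\Bigl(\|u\|_{X_{b,s}^{T'}}+\|u\|_{X_{b,s}^{T'}}^{3}\Bigr)+C\|g\|_{L^{2}H^{s}},
\end{equation*}
with an analogous Lipschitz bound for $\Phi(u)-\Phi(v)$. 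Picking $T'$ small in terms of $\|u_{0}\|_{H^{s}}$ and $\|g\|_{L^{2}H^{s}}$ makes $\Phi$ a contraction on a suitable ball; its fixed point gives existence, uniqueness, and Lipschitz continuity of $(u_{0},g)\mapsto u$ on $[-T',T']$.

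To reach the full interval $[-T,T]$ I would iterate the local construction. Pairing \eqref{dampingsource} with $u$ in $L^{2}(\mathbb{T})$ and taking imaginary parts, the cubic term disappears and one obtains
\begin{equation*}
\tfrac{d}{dt}\|u\|_{L^{2}}^{2}=-2\|\varphi a u\|_{L^{2}}^{2}+2\,\mathrm{Im}\langle g,u\rangle_{L^{2}(\mathbb{T})},
\end{equation*}
so Gronwall yields a uniform bound on $\|u(t)\|_{L^{2}}$ over $[0,T]$ depending only on $\|u_{0}\|_{L^{2}}$ and $\|g\|_{L^{2}_{t,x}}$. At $s=0$ this suffices, since the local-in-time existence radius $T'$ depends only on this quantity, and finitely many iterations cover $[-T,T]$. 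For $s>0$ one uses the refined last inequality of Lemma \ref{trilinear_estimates}, which decouples the top regularity through a low-regularity factor, leading via Gronwall to a polynomial-in-time growth estimate on $\|u(t)\|_{H^{s}}$; non-integer values are handled by interpolation. I expect the main delicate point to be treating the damping term without losing derivatives, which rules out applying Lemma \ref{estimates_a} directly; routing the estimate through the $X_{0,s}$-level instead is what makes the proof go through, and the compatibility between the trilinear gain at index $5/16$ and the Duhamel integration is precisely what forces the upper bound $b<11/16$ stated in the theorem.
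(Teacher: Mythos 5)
Your proposal is correct and follows essentially the same route as the paper: the Duhamel/contraction argument in $X_{b,s}^{T}$ via Lemmas \ref{integral} and \ref{trilinear_estimates} with $b'=5/16$ (whence $b<11/16$), the $L^2$ mass identity plus Gronwall for global existence, and persistence of regularity by noting the existence time at level $s$ is controlled by the $L^2$-level norm. The only minor imprecision is that the decoupling of the top regularity needed for $s>0$ already comes from the first trilinear estimate of Lemma \ref{trilinear_estimates} (valid for all $s\geq 0$, with two factors at regularity $0$), so there is no need to invoke the $s\geq 1$ refined estimate or interpolation.
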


\begin{proof}
%	To prove this result we use the ideias introduced by Bourgain in \cite{Bourgain}.
	
	 Initially, we notice that $g\in X_{-b',s}^T$, for $b'\geq 0$. We restrict ourself to positive times. The solution on $[-T,0]$ can be obtained similarly. 
	
	Define the integral operator by
	\begin{equation}\label{integraleq}
	\Lambda (u)(t)= e^{-it(\partial_x^2-\partial_x^4)}u_0-i\int_0^te^{-i(t-t')(\partial_x^2- \partial_x^4)}(\lambda|u|^2u-i\varphi^2(t)a^2u+g)dt'.
	\end{equation} 
we are interested in applying the fixed point argument on the space $X_{b,s}^T$. To do it, let $\psi\in C_0^{\infty}(\mathbb{R})$ such that $\psi(t)=1$ for $t\in[-1,1]$. By Lemmas \ref{integral} and \ref{trilinear_estimates},  for $0<b'\leq\frac12\leq b$ and $b+b'\leq 1$, we have that
	\begin{equation}\label{estimate1}
	\begin{split}
	\|\Lambda(u)\|_{X_{b,s}^T}&\leq C\|u_0\|_{H^s(\mathbb{T})}+CT^{1-b-b'}\|\lambda|u|^2u-i\varphi^2(t)a+g\|_{X_{-b',s}^T}\\
	&\leq C\|u_0\|_{H^s(\mathbb{T})}+CT^{1-b-b'}\left(\|\varphi^2a^2u\|_{X_{0,s}^T}+\||u|^2u\|_{X_{b',s}^T}+\|g\|_{X_{-b',s}^T}\right)\\
		&\leq C\|u_0\|_{H^s(\mathbb{T})}+CT^{1-b-b'}\|u\|_{X_{b,s}^T}\left(1+\|u\|_{X_{b,0}^T}^2\right)+CT^{1-b-b'}\|g\|_{X_{-b',s}^T}.
	\end{split}
	\end{equation}
	In the same way, we get that
	\begin{equation}\label{estimate2}
	\|\Lambda(u)-\Lambda(v)\|_{X_{b,s}^T}\leq CT^{1-b-b'}(1+\|u\|_{X_{b,s}^T}^2+\|v\|_{X_{b,s}^T}^2)\|u-v\|_{X_{b,s}^T}.
	\end{equation}
These estimates imply that if T is chosen small enough $\Lambda$ is a contraction on a suitable ball of $X_{b,s}^T$. 

Let us prove the uniqueness in the class $X_{b,s}^T$ for the integral equation \eqref{integraleq}. Set 
\begin{equation}
w(t)=e^{-it(\partial_x^2-\partial_x^4)}u_0-i\int_0^te^{-i(t-t')(\partial_x^2- \partial_x^4)}(\lambda|u|^2u-i\varphi^2(t)a+g)dt'.
\end{equation}
By using \eqref{trilinear_estimates} we have that $|u|^2u \in X_{-b',s}^{T}$, for any $b'$ satisfying $-\frac{5}{16}<b'<\frac12$. Moreover, follows that
\begin{equation}
\partial_t\left[\int_0^te^{-i\tau(\partial_x^2-\partial_x^4)}(-i a^2\varphi^2u+\lambda |u|^2u+g)(\tau)\right]d \tau=e^{-it(\partial_x^2-\partial_x^4)}(-i a^2\varphi^2u+\lambda |u|^2u+g)(\tau),
\end{equation}
in the distributional sense. This implies that $w$ solves the following equation
\begin{equation}
i\partial_tw+\partial_x^2w-\partial_x^4 w+ia^2\varphi^2 u =\lambda |u|^2u
+g.	\end{equation}
Therefore, it follows that $v(t)=e^{-it(\partial_x^2-\partial_x^4)}(u-w)$ is solution of $\partial_tv=0$ and $v(t)=0$, respectively. Thus, $v=0$ and $u$  is solution of the integral equation. 

Let us prove the propagation of regularity. Firstly, note that for $u_0\in H^s(\mathbb{T})$, with $s>0$, we have an existence of time $T$ for the solution $u$ of integral equation in $X_{b,0}^T$ and another solution $\tilde{u}$ on time $\tilde{T}$ in $X_{b,s}^{\tilde{T}}$. By uniqueness in $X_{b,0}^T$ booth solutions $u$ and $\tilde{u}$ are the same on $[0,\tilde{T}]$. Admitting that $\tilde{T}< T$, we get that the blow up of $\|u(t)\|_{H^s(\mathbb{T})}$, as $t\to \tilde{T}$, while $\|u\|_{L^2(\mathbb{T})}$ remains bounded on this interval. By using local existence in $L^2(\mathbb{T})$ and Lemma \ref{lemma4} we see that$\|u\|_{X_{b,0}^{\tilde{T}}}$ is finite. Thus, estimate \eqref{estimate1} on $[\tilde{T}-\epsilon,\tilde{T}]$, with $\epsilon$ small enough such that $$C\epsilon^{1-b-b'}(1+\|u\|_{X_{b,0}[\tilde{T}-\epsilon,\tilde{T}]}^2)<\frac12,$$ ensures that
\begin{equation*}
\|u\|_{X_{b,s}[\tilde{T}-\epsilon,\tilde{T}]}\leq C(\|u(T-\epsilon)\|_{H^s(\mathbb{T})}+\|g\|_{X_{-b',s}}).
\end{equation*}
Then, $u\in X_{b,s}^{\tilde{T}}$, contradicting the blow up of $\|u(t)\|_{H^s(\mathbb{T})}$ near $\tilde{T}$. 

The second step is to use $L^2(\mathbb{T})$ energy estimates to obtain global existence in $L^2(\mathbb{T})$ and consequently, by using the above argument, in $H^s(\mathbb{T})$. Multiplying \eqref{dampingsource} by $\overline{u}$ , taking imaginary part, integrating by parts and using Cauchy--Schwarz inequality, we get
\begin{equation}\label{mass}
\|u(t)\|_{L^2(\mathbb{T})}^2\leq \|u_0\|_{L^2(\mathbb{T})}^2+C\int_0^t\|u(\tau)\|_{L^2(\mathbb{T})}^2d\tau+C\|g\|_{L^2([-T,T];L^2(\mathbb{T}))}.
\end{equation}
By using Gronwall inequality, we have that
\begin{equation*}
\|u(t)\|_{L^2(\mathbb{T})}^2\leq C(\|u_0\|_{L^2(\mathbb{T})}^2+\|g\|_{L^2([-T,T];L^2(\mathbb{T}))})e^{C|t|}.
\end{equation*}
Thus, the norm $L^2(\mathbb{T})$ remains bounded and the solution $u$ is global in time. 

Lastly, we prove the continuity of flow. Let $\tilde{u}$ the solution of \eqref{dampingsource} with $\tilde{u}_0$ and $\tilde{g}$, instead $u_0$ and $g$. A slight modification of \eqref{estimate2} yields that
\begin{equation*}
\begin{split}
	\|u-\tilde{u}\|_{X_{b,s}^T}\leq C\|u_0-\tilde{u}_0\|_{H^s(\mathbb{T})}+C\|g-\tilde{g}\|_{X_{-b',s}} +CT^{1-b-b'}(1+\|u\|_{X_{b,s}^T}^2+\|\tilde{u}\|_{X_{b,s}^T}^2)\|u-v\|_{X_{b,s}^T}.
	\end{split}
\end{equation*}
Then, for $T$ small enough depending on the size of $u_0$, $\tilde{u}_0$, $g$ and $\tilde{g}$, follows that
\begin{equation*}
\|u-\tilde{u}\|_{X_{b,s}^T}\leq C \|u_0-\tilde{u}_0\|_{H^s(\mathbb{T})}+C\|g-\tilde{g}\|_{X_{-b',s}^T}.
\end{equation*}
Thus, that the map data to solution is Lipschitz continuous on bounded sets for arbitrary $T$ and, consequently, the proof is complete. 
	\end{proof}
	
In the next two propositions we will give estimates that connect the solution $u$ of the 4NLS \eqref{dampingsource} with the damping term and source term.
\begin{proposition}\label{proposition2}
	For every $T>0$, $\eta>0$
 and $s\geq 0$, there exists $C(T, \eta,s)$ such that for every $u\in X_{b,s}^T$ solution of \eqref{dampingsource} with $\|u_0\|_{L^2(\mathbb{T})}+\|g\|_{L^2([0,T];H^s(\mathbb{T}))}\leq \eta$ the following estimate holds
 \begin{equation*}
 \|u\|_{X_{b,s}^T}\leq C(T,\eta,s)(\|u_0\|_{H^s(\mathbb{T})}+\|g\|_{L^2([0,T];H^s(\mathbb{T}))})
 \end{equation*}
	\end{proposition}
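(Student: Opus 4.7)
The plan is to prove the bound in two stages: first obtain a uniform-in-$t$ mass control on $u$ which is independent of $\|u_0\|_{H^s}$, then on sufficiently short time slices use that control to turn the Duhamel inequality into a \emph{linear} estimate at the $X_{b,s}$ level, and finally concatenate via Lemma \ref{lemma4}. This is essentially an \emph{a priori} version of the fixed-point argument used in Theorem \ref{LWP}, but for a datum only bounded in $L^2$.

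I would begin with the mass inequality \eqref{mass} applied to the solution $u$ of \eqref{dampingsource} and Gronwall's lemma, which immediately yields a constant $M(T,\eta)$ with
$$\sup_{t\in[0,T]}\|u(t)\|_{L^2(\mathbb{T})}\leq M(T,\eta),$$
independent of $s$ and of $\|u_0\|_{H^s}$. Next, on a subinterval $I=[t_0,t_0+\delta]$, writing $u$ via the Duhamel formula starting from $t_0$ and applying Lemma \ref{integral}, the trilinear estimate of Lemma \ref{trilinear_estimates} and Lemma \ref{estimates_a} for the damping $\varphi^2a^2u$ term, I would get (with $0<b'<1/2$ chosen so $b+b'<1$)
$$\|u\|_{X_{b,0}^I}\leq C\|u(t_0)\|_{L^2}+C\delta^{1-b-b'}\|u\|_{X_{b,0}^I}\bigl(1+\|u\|_{X_{b,0}^I}^2\bigr)+C\delta^{1-b-b'}\|g\|_{L^2(I;L^2)}.$$
By Lemma \ref{lemma2} the map $\delta\mapsto\|u\|_{X_{b,0}^{[t_0,t_0+\delta]}}$ is continuous and its limit at $\delta=0$ is controlled by $\|u(t_0)\|_{L^2}\leq M$. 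A continuity/bootstrap argument then permits the choice of $\delta=\delta(T,\eta)>0$ (independent of $t_0$) small enough that the cubic contribution is absorbed into the left-hand side, producing
$$\|u\|_{X_{b,0}^I}\leq C(T,\eta)\bigl(\|u(t_0)\|_{L^2}+\|g\|_{L^2(I;L^2)}\bigr).$$

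To upgrade to $H^s$ I would invoke the first trilinear estimate of Lemma \ref{trilinear_estimates},
$$\||u|^2u\|_{X_{-b,s}^I}\leq C\|u\|_{X_{b,0}^I}^2\,\|u\|_{X_{b,s}^I},$$
which is \emph{linear} in the high-regularity factor now that $\|u\|_{X_{b,0}^I}$ has been controlled by the previous step. Combined with Lemma \ref{integral} (and shrinking $\delta$ if necessary), this gives the one-slice bound
$$\|u\|_{X_{b,s}^I}\leq C(T,\eta)\bigl(\|u(t_0)\|_{H^s}+\|g\|_{L^2(I;H^s)}\bigr).$$
Finally I would cover $[0,T]$ by $N=N(T,\eta)$ subintervals $I_k=[t_k,t_{k+1}]$ of length at most $\delta$, use the continuous embedding $X_{b,s}^{I_k}\hookrightarrow C(I_k;H^s)$ (valid for $b>1/2$) to bound $\|u(t_{k+1})\|_{H^s}$ by $\|u\|_{X_{b,s}^{I_k}}$, iterate the one-slice estimate so the constant grows at worst like $C(T,\eta)^N$, and assemble all slices via Lemma \ref{lemma4} into the global bound with a constant $C(T,\eta,s)$.

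The main obstacle is the apparent cubic non-linearity, which in a naive application of the Duhamel formula would force a polynomial of degree three in $\|u\|_{X_{b,s}^T}$ on the right-hand side and hence fail to produce a linear estimate. The whole argument hinges on two structural facts: the mass identity provides control of $\|u\|_{L^2}$ \emph{independently} of $\|u_0\|_{H^s}$, and the trilinear estimate of Lemma \ref{trilinear_estimates} places only $X_{b,0}$ norms on two of the three factors, so once the mass is tamed the problem becomes effectively linear at the $X_{b,s}$ level. Without this separation of regularities the iteration would not close linearly in the data.
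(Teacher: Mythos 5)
Your proposal is correct and follows essentially the same route as the paper: a Gronwall/mass bound giving an $L^2$ control depending only on $\eta$, a short-time Duhamel estimate in $X_{b,0}$ with the cubic term absorbed by a continuity/bootstrap argument, promotion to $X_{b,s}$ using the fact that the trilinear estimate is linear in the high-regularity factor, and concatenation of time slices. You are somewhat more explicit than the paper about justifying the absorption step via the continuity of $\delta\mapsto\|u\|_{X_{b,0}^{[t_0,t_0+\delta]}}$ (Lemma \ref{lemma2}), but the argument is the same in substance.
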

\begin{proof}
	Initially, assume $T\leq1$. Using \eqref{estimate1} we have that
	\begin{equation*}
	\|u\|_{X_{s,b}^T}	\leq C(\|u_0\|_{H^s(\mathbb{T})}+\|g\|_{X_{-b',s}^T})+C_ST^{1-b-b'}\|u\|_{X_{b,s}^T}\left(1+\|u\|_{X_{b,0}^T}^2\right).
	\end{equation*}
Choose $T$ such that $C_sT^{1-b-b'}\leq \frac12$, then the following inequality holds
	\begin{equation}\label{tec0}
	\|u\|_{X_{s,b}^T}	\leq C(\|u_0\|_{H^s(\mathbb{T})}+\|g\|_{X_{-b',s}^T})+C_sT^{1-b-b'}\|u\|_{X_{b,s}^T}\|u\|_{X_{b,0}^T}^2,
	\end{equation}
By using estimate \eqref{tec0}, for $s=0$, and choosing $T_1$ satisfying
	\begin{equation}\label{firstT}
	T_1^{1-b-b'}<\frac{1}{2C_0\left(\|u_0\|_{L^2(\mathbb{R})}+\|g\|_{X_{-b',0}^{T_1}}\right)^2},
	\end{equation}
	we obtain
	\begin{equation}\label{tec2}
	\|u\|_{X_{0,b}^{T_1}}	\leq C(\|u_0\|_{L^2(\mathbb{T})}+\|g\|_{X_{-b',0}^{T_1}})\leq C(\|u_0\|_{L^2(\mathbb{T})}+\|g\|_{L^2((T_1,T_1+\epsilon);L^2(\mathbb{T}))}).
	\end{equation}
	
	For the other hand, estimate \eqref{mass} implies 
	\begin{equation}\label{tec3}
	\|u(t)\|_{L^2(\mathbb{T})}\leq C \eta e^{C|t|}\leq C \eta e^{C}\leq C(\eta),
	\end{equation}
	where we have used that $T\leq1$. Then, thanks to \eqref{tec2} and \eqref{tec3}, there is a constant $\epsilon=\epsilon(\eta)$ such that
	\begin{equation}\label{tec11}
	\|u(T_1^{-}+s)\|_{X_{b,0}^{\epsilon}}	\leq C(\|u(T_1)\|_{L^2(\mathbb{T})}+\|g(t)\|_{L^2((T_1,T_1+\epsilon);L^2(\mathbb{T}))})
	\end{equation}
	and follows that the estimate \eqref{tec2} is valid for some large interval $[0,T]$, with $T\leq 1$, for any constant $C$ depending of $\eta$.
	
	Now, we back to the case $s>0$. For $T_s$ satisfying $C_sT_s^{1-b-b'}\leq \frac12$, by using \eqref{tec1} and \eqref{tec0}, we obtain
		\begin{equation*}
	C_sT_s^{1-b-b'}\|u\|_{X_{b,0}^{T_s}}^2\leq C_s T_s^{1-b-b'}C(\eta)^2\eta^2.
	\end{equation*}
Thus, follows that for an appropriate $T\leq\epsilon(\eta,s)$ the last expression can be controlled by $1/2$, therefore,  the following inequality is also true
	\begin{equation*}
	\|u\|_{X_{s,b}^T}	\leq C(\|u_0\|_{H^s(\mathbb{T})}+\|g\|_{L^2((0,T);H^s(\mathbb{T}))}).
	\end{equation*}
	Again, piecing solutions together, we get the same result for large $T\leq1$ with $C$ depending only of $\eta$ and $s$. Finally, the assumption $T\leq 1$ is removed similarly with a final constant $C(s,\eta,T )$.
\end{proof}

\begin{proposition}\label{proposition3}
	For every $T>0$ and $\eta>0$, there exists a constant $C(T,\eta)$ such that for all $s\geq 1$, we can find $C(T,\eta,s)$ such that $u\in X_{b,s}^T$ solution of \eqref{dampingsource} with $$\|u_0\|_{L^2(\mathbb{T})}+\|g\|_{L^2([0,T];H^s(\mathbb{T}))}\leq \eta,$$ satisfies
	\begin{equation*}
	\begin{split}
	\|u\|_{X_{b,s}^T}\leq& C(T,\eta)(\|u_0\|_{H^s(\mathbb{T})}+\|g\|_{L^2([0,T];H^s(\mathbb{T}))})\\
	& +C(T,\eta,s)(\|u\|_{X_{b,s-1}^T}\|u\|_{X_{b,1}^T}\|u\|_{X_{b,0}^T}+\|u\|_{X_{b,s-1}^T}).
	\end{split}
	\end{equation*} 
\end{proposition}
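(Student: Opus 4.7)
The strategy is to mirror the argument for Proposition~\ref{proposition2}, but to exploit the \emph{tame} structure of the refined trilinear estimate in Lemma~\ref{trilinear_estimates}: namely the splitting
\begin{equation*}
\||u|^2u\|_{X_{-b,s}^T}\le C\|u\|_{X_{5/16,0}^T}^2\|u\|_{X_{5/16,s}^T}+C_s\|u\|_{X_{5/16,s-1}^T}\|u\|_{X_{5/16,1}^T}\|u\|_{X_{5/16,0}^T},
\end{equation*}
in which only the lower-order piece carries the $s$-dependent constant. The entire art of the proof is to prevent any $s$-dependent constant from decorating the initial datum or the source.

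First, on a short interval $[0,T_*]$ with $T_*\le 1$, combining the Duhamel formula \eqref{integraleq} with Lemma~\ref{integral}, the tame trilinear bound above and a Leibniz splitting of the damping $a^2\varphi^2 u$, I would obtain, for admissible $0<b'\le 1/2\le b$ with $b+b'\le 1$,
\begin{align*}
\|u\|_{X_{b,s}^{T_*}}\le {}& C\bigl(\|u_0\|_{H^s}+\|g\|_{L^2H^s}\bigr)+CT_*^{1-b-b'}\bigl(1+\|u\|_{X_{b,0}^{T_*}}^2\bigr)\|u\|_{X_{b,s}^{T_*}}\\
& +C_sT_*^{1-b-b'}\bigl(\|u\|_{X_{b,s-1}^{T_*}}\|u\|_{X_{b,1}^{T_*}}\|u\|_{X_{b,0}^{T_*}}+\|u\|_{X_{b,s-1}^{T_*}}\bigr).
\end{align*}
Here the damping contribution is controlled via $\|a^2\varphi^2 u\|_{X_{0,s}}\le \|a\|_{L^\infty}^2\|\varphi\|_{L^\infty}^2\|u\|_{X_{0,s}}+C_s\|u\|_{X_{0,s-1}}$ (commutator/Leibniz on the smooth multiplier, yielding an $s$-independent leading constant and an $s$-dependent remainder), together with the inclusions $\|\cdot\|_{X_{-b',s}}\le \|\cdot\|_{X_{0,s}}\le \|\cdot\|_{X_{b,s}}$ for $b'\ge 0$, $b\ge 0$.

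By Proposition~\ref{proposition2} applied with $s=0$, $\|u\|_{X_{b,0}^{T_*}}\le K(\eta)$ uniformly for $T_*\le 1$. The crux is to choose $T_*=T_*(\eta)\le 1$ \emph{independent of $s$} such that $CT_*^{1-b-b'}(1+K(\eta)^2)\le 1/2$; absorbing the high-regularity factor to the left gives
\begin{align*}
\|u\|_{X_{b,s}^{T_*}}\le {} & 2C\bigl(\|u_0\|_{H^s}+\|g\|_{L^2H^s}\bigr)\\
& +2C_s\bigl(\|u\|_{X_{b,s-1}^{T_*}}\|u\|_{X_{b,1}^{T_*}}\|u\|_{X_{b,0}^{T_*}}+\|u\|_{X_{b,s-1}^{T_*}}\bigr).
\end{align*}
I would then iterate the above over $N\simeq T/T_*(\eta)$ subintervals $[kT_*,(k+1)T_*]$, using Lemma~\ref{lemma2} (embedding $X_{b,s}\hookrightarrow C(H^s)$ for $b>1/2$) to transfer endpoint data $u(kT_*)$ between consecutive intervals, and Lemma~\ref{lemma4} to reassemble the global $X_{b,s}^T$ norm. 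The resulting geometric prefactors of the form $(2C)^N$ and $N$ depend only on $T$ and $\eta$, so they can be collected into a single $C(T,\eta)$ multiplying the data, while the lower-order contributions inherit the $s$-dependent prefactor $C(T,\eta,s)$.

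The main obstacle is precisely the \emph{$s$-independence} of the constant in front of the data. This requires three ingredients to cooperate: the tame splitting in Lemma~\ref{trilinear_estimates}; an analogous Leibniz splitting for the smooth multiplier $a^2\varphi^2$; and the $s$-independent $L^2$-bound $K(\eta)$ supplied by Proposition~\ref{proposition2}, which is what enables the choice of $T_*(\eta)$ and hence allows the iteration count $N$ to be governed by $(T,\eta)$ alone.
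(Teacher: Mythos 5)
Your proposal is correct and follows essentially the same route as the paper: a short-time Duhamel estimate combining Lemma \ref{integral} with the tame trilinear bound of Lemma \ref{trilinear_estimates} and a commutator splitting for the multiplier $a^2\varphi^2$ (the paper invokes Lemma A.1 of Laurent's ESAIM paper for this), followed by the $s$-independent $X_{b,0}^T$ bound from Proposition \ref{proposition2} to fix an absorption time depending only on $\eta$, and finally an iteration over subintervals. Your write-up merely makes explicit the bootstrap/gluing step that the paper leaves implicit.
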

	
	\begin{proof}
	Initially, we assume $T\leq1$. By using Lemma \ref{integral} we have a constant $C$, independent of $s$ such that
	\begin{equation*}
	\begin{split}
	\|u\|_{X_{b,s}^T}\leq& \ C (\|u_0\|_{H^s(\mathbb{T})}+\|g\|_{L^2([0,T]:H^s(\mathbb{T}))})\\
	& +CT^{1-b-b'}(\|a^2 \varphi^2u\|_{L^2([0,T];H^s(\mathbb{T}))}+\||u|^2u\|_{X_{b',s-1}^T}),
	\end{split}
	\end{equation*} 
 for $b,\ b'$ satisfying $0\leq b'\leq\frac12\leq b$, $b+b'\leq 1$.
 
\cite[Lemma A.1]{Laurent-esaim} and Lemma \ref{trilinear_estimates} give us constants $C$ and $C_s$ such that
\begin{equation}\label{teccc0}
\begin{split}
\|u\|_{X_{b,s}^T}\leq& \ C (\|u_0\|_{H^s(\mathbb{T})}+\|g\|_{L^2([0,T];H^s(\mathbb{T}))})\\
& +T^{1-b-b'}(C\|u\|_{X_{b,s}^T}+C_s\|u\|_{X_{b,s-1}^T})\\
& +T^{1-b-b'}(C\|u\|_{X_{b,0}^T}^2\|u\|_{X_{b,s}^T}+C_s\|u\|_{X_{b,s-1}^T}\|u\|_{X_{b,1}^T}\|u\|_{X_{b,0}^T}).
\end{split}
\end{equation} 
From Proposition \ref{proposition2} we have that
\begin{equation}\label{teccc1}
\|u\|_{X_{b,0}^T}\leq C(\eta,T)\left(\|u_0\|_{L^2(\mathbb{T})}+\|g\|_{L^2([0,T];H^s(\mathbb{T}))}\right)\leq C(\eta,T)\eta.
\end{equation}
For $T\leq1$ in the last inequality $C(\eta):=C(\eta,1)$.
 
 By putting \eqref{teccc1} into \eqref{teccc0}, for small enough $T$ (depending only $\eta$), we get 
 	\begin{equation*}
 \begin{split}
 \|u\|_{X_{b,s}^T}\leq& C(\|u_0\|_{H^s(\mathbb{T})}+\|g\|_{L^2([0,T];H^s(\mathbb{T}))})\\
 &+C(s)(\|u\|_{X_{b,s-1}^T}\|u\|_{X_{b,1}^T}\|u\|_{X_{b,0}^T}+\|u\|_{X_{b,s-1}^T}).
 \end{split}
 \end{equation*}
Then, the conclusion of Lemma follows by a bootstrap argument. 	\end{proof}

\section{Local controllability\label{sec4}}
This section is devoted to prove the local controllability near of the null trajectory of the 4NLS \eqref{Controlzeroa} by a perturbative argument near the one done by E. Zuazua in \cite{Zuazua}. Then, we will use the fixed point theorem of Picard to deduce our result from the linear control.

%\subsection{Proof of Theorem \ref{main1}}
First of all, we know (see, for instance, \cite{zz,zheng}) that any nonempty set $\omega$  satisfies an \textit{observability inequality} in $L^2(\mathbb{T})$ in arbitrary small time $T>0$. This means that:

\vspace{0.2cm}
\textit{For any $a(x)\in C^{\infty}(\mathbb{T})$ and $\varphi(t)\in C^{\infty}_0(0,T)$ real valued such that $a\equiv1$ on $\omega$ and $\varphi\equiv1$ on $[T/3,2T/3]$, there exists $C>0$ such that }
\begin{equation}\label{OI_L}
\left\Vert \Psi_0\right\Vert^2_{L^2(\mathbb{T})}\leq C\int_0^T\left\Vert a(x)\varphi(t)e^{it(\partial^2_x-\partial^4_x)}\Psi_0\right\Vert^2_{L^2(\mathbb{T})}dt,
\end{equation}
\textit{for every $\Psi_0\in L^2(\mathbb{T})$.}
\vspace{0.2cm}

\noindent Exact controllability property of a control system is equivalent to the observability of its adjoint system using the Hilbert Uniqueness Method introduced by J.-L. Lions \cite{lions1}. Thus, observability inequality \eqref{OI_L} implies the exact controllability in $L^2(\mathbb{T}):=L^2$ for the linear equation associated to \eqref{Controlzeroa}. 

To be precise, let us follow \cite[Section 5]{dehman-gerard-lebeau} to construct an isomorphism of control
%to construct an isomorphism of control 
\begin{equation*}
\begin{split}
\mathcal{R}:\ &L^2\to L^2\\
&\Phi_0\to\mathcal{R}\Phi_0=\Psi_0
\end{split}
\end{equation*}
 %by, for every $\Phi_0\in L^2$, %there exists $\Phi_0=\mathcal{R}^{-1}\Psi_0$, 
%$\mathcal{R}\Phi_0=\Psi_0$ 
such that if $\Phi$ is solution of the adjoint system 
\begin{equation}\label{adj_lin}
	\begin{cases}
	i\partial_t\Phi +\partial_x^2\Phi-\partial_x^4\Phi=0,& (x,t)\in \mathbb{T}\times \mathbb{R},\\
	\Phi(x,0)=\Phi_0(x),& x\in \mathbb{T}\end{cases}
	\end{equation}
and $\Psi$ solution of
\begin{equation*}
	\begin{cases}
	i\partial_t\Psi +\partial_x^2\Psi-\partial_x^4\Psi=a^2(x)\varphi^2(t)\Phi,& (x,t)\in \mathbb{T}\times \mathbb{R},\\
	\Psi(x,T)=\Psi_T(x)=0,& x\in \mathbb{T},\end{cases}
	\end{equation*}
we get that $\Psi(x,0)=\Psi_0(x)$. First, notice that application $\mathcal{R}$ has the following property:

\begin{lemma}\label{iso}
For every $s\geq0$, $\mathcal{R}$ is an isomorphism of $H^s(\mathbb{T})$.
\end{lemma}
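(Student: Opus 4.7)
The plan is to bootstrap from the $L^2$-case --- which is essentially the content of HUM applied to the observability inequality \eqref{OI_L} --- up to every $H^s$ by a commutator argument. Writing $W(t):=e^{it(\partial_x^2-\partial_x^4)}$, Duhamel's formula for the backward state equation yields the explicit representation
\[
\mathcal{R}\Phi_0 \;=\; i\int_0^T W(-t)\,a^2(x)\varphi^2(t)\,W(t)\Phi_0\,dt.
\]
Since $W(t)$ is a unitary Fourier multiplier on every $H^s(\mathbb{T})$ and multiplication by the smooth real-valued function $a^2(x)\varphi^2(t)$ is bounded on $H^s(\mathbb{T})$, this formula immediately shows that $\mathcal{R}:H^s(\mathbb{T})\to H^s(\mathbb{T})$ is continuous. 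The base case $s=0$ is the standard HUM consequence of \eqref{OI_L}: the identity
\[
\mathrm{Im}\,\langle \mathcal{R}\Phi_0,\Phi_0\rangle_{L^2}
\;=\;\int_0^T \|a\varphi W(t)\Phi_0\|_{L^2(\mathbb{T})}^{2}\,dt
\]
together with \eqref{OI_L} gives coercivity $|\langle \mathcal{R}\Phi_0,\Phi_0\rangle|\geq C^{-1}\|\Phi_0\|_{L^2}^2$, which by the Lax--Milgram argument yields that $\mathcal{R}$ is an $L^2$-isomorphism.

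Next, I would establish the key commutator identity. Because $D^s$ commutes with $W(\pm t)$,
\[
D^s\mathcal{R}\Phi_0 \;=\; \mathcal{R}(D^s\Phi_0) \;+\; K_s\Phi_0,\qquad
K_s\Phi_0\;:=\;i\int_0^T W(-t)\,\bigl[D^s,\,a^2\varphi^2(t)\bigr]\,W(t)\Phi_0\,dt.
\]
Since $a\in C^\infty(\mathbb{T})$ and $\varphi\in C_0^\infty(0,T)$, for integer $s\geq 1$ an explicit computation (in the spirit of Lemma \ref{estimates_a}) shows that $[D^s,a^2\varphi^2(t)]$ is a differential operator of order $s-1$ with coefficients uniformly controlled in $t$; combined with the unitarity of $W(t)$ on Sobolev spaces, this gives the continuity $K_s:H^{s-1}(\mathbb{T})\to L^2(\mathbb{T})$. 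With this at hand, the $H^s$-isomorphism property follows by induction on integer $s$. Indeed, assuming the result on $H^{s-1}$ and given $\Psi_0\in H^s(\mathbb{T})\subset H^{s-1}(\mathbb{T})$, the inductive hypothesis furnishes $\Phi_0:=\mathcal{R}^{-1}\Psi_0\in H^{s-1}(\mathbb{T})$; applying $D^s$ to the identity $\mathcal{R}\Phi_0=\Psi_0$ and rearranging via the commutator formula yields
\[
\mathcal{R}(D^s\Phi_0) \;=\; D^s\Psi_0 \;-\; K_s\Phi_0 \;\in\; L^2(\mathbb{T}),
\]
and the $L^2$-isomorphism forces $D^s\Phi_0\in L^2(\mathbb{T})$, hence $\Phi_0\in H^s(\mathbb{T})$, together with the quantitative bound $\|\Phi_0\|_{H^s}\lesssim \|\Psi_0\|_{H^s}$. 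Combined with the continuity of $\mathcal{R}$ on $H^s$ proved above and with injectivity inherited from the $L^2$-case, this settles the isomorphism claim for every integer $s\geq 0$.

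Finally, non-integer values of $s\geq 0$ can be handled either directly via the pseudodifferential calculus on $\mathbb{T}$ (symbol $|k|^s$ against smooth multiplication still produces a commutator of order $s-1$) or, more economically, by complex interpolation between the integer levels exactly as in the Stein--Weiss step used in the proof of Lemma \ref{estimates_a}. The main technical obstacle is precisely this commutator estimate for fractional $s$; once it is in place, the inductive machine above closes the proof in a routine manner.
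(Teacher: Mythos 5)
Your proposal is correct and follows essentially the same route as the paper: both rest on the explicit representation $\mathcal{R}\Phi_0=i\int_0^T W(-t)a^2\varphi^2 W(t)\Phi_0\,dt$, the observation that $[D^s,a^2\varphi^2]$ is of order $s-1$ (the paper invokes \cite[Lemma A.1]{Laurent-esaim} for this, valid for all real $s\in[0,1]$ before iterating), and a bootstrap from the $L^2$-isomorphism to conclude $\mathcal{R}\Phi_0\in H^s\Rightarrow\Phi_0\in H^s$. The only cosmetic differences are that you spell out the $L^2$ base case via Lax--Milgram (which the paper delegates to the HUM construction of \cite{dehman-gerard-lebeau}) and handle fractional $s$ by interpolation rather than by a commutator lemma stated directly for fractional order.
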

\begin{proof}
To get the result we need to prove that  $\mathcal{R}$ maps $H^s(\mathbb{T})$ into itself and  $\mathcal{R}\Phi_0\in H^s(\mathbb{T})$ implies $\Phi_0\in H^s(\mathbb{T})$, that is, $D^s\Phi_0\in L^2$, where $D^s$ is defined by \eqref{k10}. Is not difficult to see that $\mathcal{R}$ maps $H^s(\mathbb{T})$ into itself. Thus, we check the following:

\vspace{0.2cm}
\noindent\textit{Claim:} $\mathcal{R}\Phi_0\in H^s(\mathbb{T})$ implies $\Phi_0\in H^s(\mathbb{T})$.
\vspace{0.2cm}

The claim is equivalent to show that $D^s\Phi_0\in L^2$, $D^s$ is defined by \eqref{k10}. Remember that, $$\mathcal{R}\Phi_0=i\int_0^Te^{-it(\partial_x^2-\partial_x^4)}\varphi^2a^2e^{it(\partial_x^2-\partial_x^4)}\Phi_0dt.$$
Since $\mathcal{R}^{-1}$ is continuous from $L^2$ into itself we get, using \cite[Lemma A.1]{Laurent-esaim}, that
\begin{align*}
\left\Vert D^s\Phi_0\right\Vert _{L^2}  &  \leq C\left\Vert \mathcal{R}D^s\Phi_0\right\Vert _{L^2}\leq C\left\Vert \int_0^Te^{-it(\partial_x^2-\partial_x^4)}\varphi^2a^2e^{it(\partial_x^2-\partial_x^4)}D^s\Phi_0dt\right\Vert _{L^2}\\
&  \leq C\left\Vert D^s \int_0^Te^{-it(\partial_x^2-\partial_x^4)}\varphi^2a^2e^{it(\partial_x^2-\partial_x^4)}\Phi_0dt\right\Vert _{L^2}\\&+C\left\Vert \int_0^Te^{-it(\partial_x^2-\partial_x^4)}[a^2,D^s]\varphi^2e^{it(\partial_x^2-\partial_x^4)}\Phi_0dt\right\Vert _{L^2}\\
&  \leq C\left\Vert \mathcal{R}\Phi_0\right\Vert _{H^s(\mathbb{T})}+C_sC\left\Vert \Phi_0\right\Vert _{H^{s-1}(\mathbb{T})}. 
\end{align*}
Thus, the result for $s\in[0,1]$ is proved. The result for $s\geq0$ can be guaranteed by iteration. Finally, above computation, for $s\geq 1$ we have
\begin{equation}\label{Iso_1}
\left\Vert \mathcal{R}^{-1}\Psi_0\right\Vert _{H^s(\mathbb{T})}\leq C(a,\psi,T)\left\Vert \Psi_0\right\Vert _{H^s(\mathbb{T})}+C(a,\psi,s,T)\left\Vert \Psi_0\right\Vert _{H^{s-1}(\mathbb{T})}.
\end{equation}
This complete the proof of claim and, consequently, lemma is verified.
\end{proof}

%Now we are in position to prove Theorem \ref{main1}.
\subsection{Proof of Theorem \ref{main1}} Pick $a(x)\in C^{\infty}_0(\omega)$ and $\psi(t)\in C_0^{\infty}(0,T)$ different from zero, such that, observability inequality \eqref{OI_L} holds. We look for the function $g$ of the form $\varphi^2(t)a^2(x)\Phi$, where $\Phi$ is solution of \eqref{adj_lin} as in linear control problem.

We are here interested in choosing an appropriate $\Phi_0$ such that we can recover the controllability properties of the system \eqref{Controlzeroa}. Consider the two systems
\begin{equation*}
	\begin{cases}
	i\partial_t\Phi +\partial_x^2\Phi-\partial_x^4\Phi=0,& (x,t)\in \mathbb{T}\times \mathbb{R},\\
	\Phi(x,0)=\Phi_0(x),& x\in \mathbb{T}\end{cases}
	\end{equation*}
and 
\begin{equation*}
	\begin{cases}
	i\partial_tu+\partial_x^2u-\partial_x^4u=\lambda |u|^2u+a^2\varphi^2\Phi,& (x,t)\in \mathbb{T}\times \mathbb{R},\\
	u(x,T)=0,& x\in \mathbb{T}.\end{cases}
	\end{equation*}
We also define the operator $\mathcal{L}$ as follows
\begin{equation}
\begin{split}
	\mathcal{L}:L^2(\mathbb{T})&\rightarrow  L^2(\mathbb{T})\\
\Phi_0&\mapsto \mathcal{L}\Phi_0=u_0=u(0). 
\end{split}
\end{equation}
The goal is then to show that $\mathcal{L}$ is onto on a small neighborhood of the origin of $H^s$, for $s\geq 0$.  Split $u$ as $u=v+\Psi$, with $\Psi$ solution of
\begin{equation}
	\begin{cases}
	i\partial_t\Psi +\partial_x^2\Psi-\partial_x^4\Psi=a^2(x)\varphi^2(t)\Phi,& (x,t)\in \mathbb{T}\times \mathbb{R},\\
	\Psi(x,T)=0,& x\in \mathbb{T}.\end{cases}
	\end{equation}
	It corresponds to the linear control, and thus, $\Psi(0)=\mathcal{R}\Phi_0$. Moreover, observe that $v$ is solution of the system
	\begin{equation}\label{5.10}
	\begin{cases}
	i\partial_tv +\partial_x^2v-\partial_x^4v=\lambda |u|^2u,& (x,t)\in \mathbb{T}\times \mathbb{R},\\
	v(x,T)=0,& x\in \mathbb{T}.\end{cases}
	\end{equation} 
	Therefore, $u,v$ and $\Psi$ belong to $X^T_{b,0}$ and $u(0)=v(0)+\Psi(0)$, which we can write $\mathcal{L}\Phi_0$ as follows $$\mathcal{L}\Phi_0=\mathcal{K}\Phi_0+\mathcal{R}\Phi_0,$$where $\mathcal{K}\Phi_0=v(0)$. Observe that $\mathcal{L}\Phi_0=u_0$, equivalently,  $\Phi_0=-\mathcal{R}^{-1}\mathcal{K}\Phi_0+\mathcal{R}^{-1}u_0$.
	
	Define the operator 
	\begin{equation*}
	\begin{split}
	\mathcal{B}:\ &L^2\to L^2\\
	&\Phi_0\to \mathcal{B}\Phi_0=\mathcal{R}^{-1}\mathcal{K}\Phi_0+\mathcal{R}^{-1}u_0.
	\end{split}
	\end{equation*}
We want to prove that $\mathcal{B}$ has a fixed point. To do it, let us firstly define the following set $$F:=B_{L^2(\mathbb{T})}(0,\eta)\cap\left(\bigcap_{i=1}^{[s]-1}B_{H^i(\mathbb{T})}(0,R_i))\right)\cap B_{H^s(\mathbb{T})}(0,R_s),$$
for $\eta$ small enough and for some large $R_i$. We may assume $T<1$ and fix it, moreover, we will denote $C$ and $C_s=C(s)$ any constant depending only $a,\varphi,b,b', T$ and $s$, respectively.

By using Lemma \ref{iso} we have that $\mathcal{R}$ is an isomorphism of $H^s(\mathbb{T})$, thus 
\begin{align}\label{5.11}
\left\Vert \mathcal{B}\Phi_0\right\Vert _{H^s(\mathbb{T})}  &  \leq C_s\left(\left\Vert \mathcal{K}\Phi_0\right\Vert _{H^s(\mathbb{T})}+\left\Vert u_0\right\Vert _{H^s(\mathbb{T})}\right).
\end{align}
By the last inequality we should estimate $\left\Vert \mathcal{K}\Phi_0\right\Vert _{H^s(\mathbb{T})}=\left\Vert v_0\right\Vert _{H^s(\mathbb{T})}$. Then, for this, we will apply to equation \eqref{5.10} the same $X^T_{b,s}$ estimates which we used in the Theorem \ref{LWP}, more precisely, Lemma \ref{integral} and Lemma \ref{trilinear_estimates}, thus we get 
\begin{equation}\label{5.12}
\begin{split}
\left\Vert v(0)\right\Vert _{H^s(\mathbb{T})}  &  \leq C\left\Vert v\right\Vert _{X    ^T_{b,s}} \leq CT^{1-b-b'}\left\Vert \left\vert u \right\vert^2 u\right\Vert _{X^{T}_{-b',s}}\leq C\left\Vert \left\vert u \right\vert^2 u\right\Vert _{X^{T}_{-b',s}}\\&  \leq C_s\left\Vert u \right\Vert^2 _{X^T_{b,0}}\left\Vert u \right\Vert _{X^T_{b,s}}. 
\end{split}
\end{equation}
By the local linear behavior of $u$, that is, using Proposition \ref{proposition2}, we obtain that $$\left\Vert u \right\Vert _{X^T_{b,0}}\leq C\left\Vert \Phi_0\right\Vert _{L^2(\mathbb{T})},$$
for $$\left\Vert \varphi^2a^2\Phi \right\Vert _{L^2([0,T];L^2(\mathbb{T}))}\leq C\left\Vert \Phi_0 \right\Vert _{L^2(\mathbb{T})}<C\eta<1.$$
Finally applying \eqref{5.11} and \eqref{5.12}, with $s=0$, this ensures that
$$
\left\Vert \mathcal{B}\Phi_0\right\Vert _{L^2(\mathbb{T})}  \leq C\left(\left\Vert \Phi_0\right\Vert^3 _{L^2(\mathbb{T})}+\left\Vert u_0\right\Vert _{L^2(\mathbb{T})}\right).
$$
Then, by the last inequality, choosing $\eta$ small enough and $\left\Vert u_0\right\Vert _{L^2(\mathbb{T})}\leq\frac{\eta}{2C}$, we have that $$\left\Vert \mathcal{B}\Phi_0\right\Vert _{L^2(\mathbb{T})}  \leq \eta$$ and, therefore, $\mathcal{B}$ reproduces the ball $B_{\eta}$ in $L^2(\mathbb{T})$. 

To prove the result on a small neighborhood of the origin $H^s(\mathbb{T})$, we will divide the proof in two steps.

\vspace{0.2cm}
\noindent
\textit{\textbf{Step 1.}} $s\in (0,1]$
\vspace{0.2cm}

For $s\leq1$, we came back to \eqref{5.12} with the following new estimates in $X^T_{b,s}$
$$\left\Vert v(0)\right\Vert _{H^s(\mathbb{T})}  \leq C_s\eta^2\left\Vert u \right\Vert^2 _{X^T_{b,s}}$$
and
$$\left\Vert \mathcal{B}\Phi_0\right\Vert _{H^s(\mathbb{T})}  \leq C_s\left(\eta^2\left\Vert u\right\Vert _{X^T_{b,s}}+\left\Vert u_0\right\Vert _{H^s(\mathbb{T})}\right).$$
Thus, using the Proposition \ref{proposition2} for $$\left\Vert \varphi^2a^2\Phi \right\Vert _{L^2([0,T];L^2(\mathbb{T}))}\leq C\left\Vert \Phi_0 \right\Vert _{L^2(\mathbb{T})}<C\eta<1,$$ we have
\begin{equation}\label{5.13}
\left\Vert u\right\Vert _{X^T_{b,s}}  \leq C_s\left\Vert \Phi_0 \right\Vert^2 _{X^T_{b,s}}
\end{equation}
and
$$\left\Vert \mathcal{B}\Phi_0\right\Vert _{H^s(\mathbb{T})}  \leq C_s\left(\eta^2\left\Vert \Phi_0\right\Vert _{H^s(\mathbb{T})}+\left\Vert u_0\right\Vert _{H^s(\mathbb{T})}\right).$$
Then, by this two inequalities, for $C_s\eta^2<1/2$, $\mathcal{B}$ reproduces any ball in $H^s(\mathbb{T})$ of the radius greater than $2C_s\left\Vert u_0\right\Vert _{H^s(\mathbb{T})}$. Therefore, we conclude that $\mathcal{B}$ reproduces the ball in $F$, if $\eta<\tilde{C}_s$, $\left\Vert u_0\right\Vert _{H^s(\mathbb{T})}\leq C(\eta)$ and $R\geq C(\left\Vert u_0\right\Vert _{H^s(\mathbb{T})})$. Furthermore, since the estimates are uniform in $s\in(0,1]$ the bound on $\eta$ is also uniform.

\vspace{0.2cm}
\noindent
\textit{\textbf{Step 2.}} $s>1$
\vspace{0.2cm}

We beginning choosing $R_i$ by induction as follows: We chosen $R_1$ as the previous case so that $\mathcal{B}$ reproduces $B_{H^1(\mathbb{T})}(0,R_1)$. Is important, in this point, to make some assumptions of smallness on $\eta$ which on will be independent of $i$ and $s$. 
%To do it possible, we will use some estimates uniform in $s$, up to some smoother terms.
Firstly, using the estimate \eqref{Iso_1} we get 
%about $\mathcal{R}$ to get
\begin{equation*}
\left\Vert \mathcal{B}\Phi_0\right\Vert _{H^i(\mathbb{T})}\leq C\left\Vert \mathcal{K}\Phi_0\right\Vert _{H^i(\mathbb{T})}+C_i\left\Vert \mathcal{K}\Phi_0\right\Vert _{H^{i-1}(\mathbb{T})}+C_i\left\Vert u_0\right\Vert _{H^{i}(\mathbb{T})}.
\end{equation*}
Analogously, for $s\in(0,1]$, we have that
\begin{equation*}
\left\Vert \mathcal{K}\Phi_0\right\Vert _{H^{i-1}(\mathbb{T})}\leq C_{i-1}\eta^2\left\Vert \Phi_0\right\Vert _{H^{i-1}(\mathbb{T})}\leq C_{i-1}\eta^2R_{i-1}.
\end{equation*}
Using multilinear estimate, Lemma \ref{trilinear_estimates}, the following holds
\begin{align*}
\left\Vert v(0)\right\Vert _{H^i(\mathbb{T})}  &  \leq C\left\Vert v\right\Vert _{X    ^T_{b,i}} \leq C\left\Vert \left\vert u \right\vert^2 u\right\Vert _{X^{T}_{-b',i}} \leq C\left\Vert u \right\Vert^2 _{X^T_{b,0}}\left\Vert u \right\Vert _{X^T_{b,i}}+C_i\left\Vert u \right\Vert_{X^T_{b,i-1}}\left\Vert u \right\Vert _{X^T_{b,1}}\left\Vert u \right\Vert _{X^T_{b,0}} .
\end{align*}
Right now, we would like to bound the term with maximum derivative. For this, we use Proposition \ref{proposition3} and \cite[Corollary A.2]{Laurent-esaim} to obtain
\begin{align*}
\left\Vert u\right\Vert _{X^T_{b,1}(\mathbb{T})} &  \leq C\left\Vert \varphi^2a^2\Phi \right\Vert_{L^2([0,T];H^i(\mathbb{T}))}+C_i\left\Vert u \right\Vert _{X^T_{b,i-1}}+C_i\left\Vert u \right\Vert _{X^T_{b,i-1}}\left\Vert u \right\Vert _{X^T_{b,1}}\left\Vert u \right\Vert _{X^T_{b,0}} \\&  \leq C\left\Vert \Phi_0 \right\Vert_{H^i(\mathbb{T})}+C_i\left\Vert \Phi_0 \right\Vert_{H^{i-1}(\mathbb{T})}+C_i\left\Vert u \right\Vert _{X^T_{b,i-1}}+C_i\left\Vert u \right\Vert _{X^T_{b,i-1}}\left\Vert u \right\Vert _{X^T_{b,1}}\left\Vert u \right\Vert _{X^T_{b,0}}.
\end{align*}
By using \eqref{5.13}, we can also bound the lower derivative, which yields
\begin{align*}
\left\Vert v(0)\right\Vert _{H^i(\mathbb{T})}  &  \leq C\eta^2\left\Vert u \right\Vert^2 _{X^T_{b,i}}+C_iR_{i-1}R_1\eta \\&  \leq C\eta^2\left\Vert \Phi_0 \right\Vert^2 _{H^i(\mathbb{T})}+ C\eta^2(C_iR_{i-1}+C_iR_{i-1}R_1\eta)+C_iR_{i-1}R_1\eta.
\end{align*}
Finally, we ensures that $$\left\Vert \mathcal{B}\Phi_0\right\Vert _{H^i(\mathbb{T})}\leq C\eta^2\left\Vert \Phi_0\right\Vert _{H^i(\mathbb{T})}+C (i,\eta,R_1,R_{i-1},\left\Vert u_0\right\Vert _{H^{i}(\mathbb{T})}).$$
Choosing $C\eta^2<1/2$ independent of $s$ and $R_i=C (i,\eta,R_1,R_{i-1},\left\Vert u_0\right\Vert _{H^{i}(\mathbb{T})})$, then $B$ reproduces $B_{H^i}(0,R_i)$. The same argument is still valid for $s\geq1$ and Step 2 is thus proved.

\vspace{0.2cm}

To finalize, $\mathcal{B}$ is contracting for $L^2(\mathbb{T})-$norm. Indeed, consider the following systems
\begin{equation}\label{adj_lin_2a}
	\begin{cases}
	i\partial_t(u-\tilde{u})+\partial_x^2(u-\tilde{u})-\partial_x^4(u-\tilde{u})=\lambda (|u|^2u-|\tilde{u}|^2\tilde{u})+a^2\varphi^2(\Phi-\tilde{\Phi}),& (x,t)\in \mathbb{T}\times \mathbb{R},\\
	(u-\tilde{u})(x,T)=0,& x\in \mathbb{T}\end{cases}
	\end{equation}
	and 
	\begin{equation*}
	\begin{cases}
	i\partial_t(v-\tilde{v}) +\partial_x^2(v-\tilde{v})-\partial_x^4(v-\tilde{v})=\lambda(|u|^2u-|\tilde{u}|^2\tilde{u}),& (x,t)\in \mathbb{T}\times \mathbb{R},\\
	(v-\tilde{v})(x,T)=0,& x\in \mathbb{T}.\end{cases}
	\end{equation*} 
Using Lemma \ref{trilinear_estimates}, follows that
\begin{equation}\label{beta}
\begin{split}
\left\Vert \mathcal{B}\Phi_0-\mathcal{B}\tilde{\Phi}_0\right\Vert _{L^2(\mathbb{T})}& \leq \left\Vert (v-\tilde{v})(0)\right\Vert _{L^2(\mathbb{T})}  \leq C\left\Vert (v-\tilde{v})\right\Vert _{X    ^T_{b,0}}\\
&\leq C T^{1-b-b'}\left\Vert \left\vert u \right\vert^2 u - \left\vert \tilde{u} \right\vert^2 \tilde{u}\right\Vert _{X^{T}_{-b',0}}\\
&  \leq C\left(\left\Vert u \right\Vert^2 _{X^T_{b,0}}+\left\Vert \tilde{u} \right\Vert^2 _{X^T_{b,0}} \right)\left\Vert u -\tilde{u}\right\Vert _{X^T_{b,0}}\\& \leq C\eta^2\left\Vert u -\tilde{u}\right\Vert _{X^T_{b,0}}.
\end{split}
\end{equation}
To bound $\left\Vert u -\tilde{u}\right\Vert _{X^T_{b,0}}$ in the last inequality \eqref{beta}, we use the equation \eqref{adj_lin_2a} to deduce
\begin{equation*}
\begin{split}
\left\Vert u-\tilde{u}\right\Vert _{X^T_{b,0}(\mathbb{T})} &  \leq C\left\Vert \varphi^2a^2(\Phi-\tilde{\Phi}) \right\Vert_{L^2([0,T];L^2(\mathbb{T}))}+C T^{1-b-b'}\left\Vert \left\vert u \right\vert^2 u - \left\vert \tilde{u} \right\vert^2 \tilde{u}\right\Vert _{X^{T}_{-b',0}} \\&  \leq C\left\Vert \Phi_0-\tilde{\Phi}_0 \right\Vert_{L^2(\mathbb{T})}+C\left(\left\Vert u \right\Vert^2 _{X^T_{b,0}}+\left\Vert \tilde{u} \right\Vert^2 _{X^T_{b,0}} \right)\left\Vert u -\tilde{u}\right\Vert _{X^T_{b,0}}\\&\leq C\left\Vert \Phi_0-\tilde{\Phi}_0 \right\Vert_{L^2(\mathbb{T})}+C\eta^2\left\Vert u -\tilde{u}\right\Vert _{X^T_{b,0}}.
\end{split}
\end{equation*}
Taking $\eta$ small enough (independent on $s$) it yields
\begin{equation}\label{phi}
\left\Vert u-\tilde{u}\right\Vert _{X^T_{b,0}(\mathbb{T})}\leq C\left\Vert \Phi_0-\tilde{\Phi}_0 \right\Vert_{L^2(\mathbb{T})}.
\end{equation}
To finish, combining \eqref{phi} into \eqref{beta} follows that
$$\left\Vert \mathcal{B}\Phi_0-\mathcal{B}\tilde{\Phi}_0\right\Vert _{L^2(\mathbb{T})}\leq C\eta^2\left\Vert \Phi_0-\tilde{\Phi}_0 \right\Vert_{L^2(\mathbb{T})}.$$
Therefore, $\mathcal{B}$ is a contraction of a closed set $F$ of $L^2(\mathbb{T}) $, for $\eta$ small enough (independent on $s$). In addition, $\mathcal{B}$ has a fixed point which is, by construction, belongs to $H^s(\mathbb{T})$. This archived the proof of Theorem \ref{main1}. \qed

\section{Propagation of compactness and regularity in Bourgain spaces\label{sec5}}

We present, in this section, some properties of propagation in Bourgain spaces for the linear differential operator $L=i\partial_{t}+\partial_{x}^{2}-\partial_{x}^{4}$ associated with the fourth order Schr\"odinger equation.
We will adapt  the results due Dehman-G\'{e}rard-Lebeau
\cite[Propositions 13 and 15]{dehman-gerard-lebeau}, in the case of $X_{b,s}$ spaces, of the Schr\"odinger operator. These results of propagation are the key to prove the global stabilization. The main ingredient is basically pseudo-differential analysis. Let us begin with a result of propagation of compactness which will ensure strong convergence in appropriate spaces for the study of the global stabilization.
\begin{proposition}
[\textit{Propagation of compactness}]\label{prop_a}Let $T>0$ and $0\leq b^{\prime}\leq
b\leq1$ be given. Suppose that $u_{n}\in X_{b,0}^{T}$ and
$f_{n}\in X_{-b,-3+3b}^{T}$ satisfying%
\[
i\partial_{t}u_{n}+\partial_{x}^{2}u_{n}-\partial_{x}^{4}u_{n}=f_{n}\text{,}%
\]
for $n=1,2,3,\ldots$. Assume that there exists a constant $C>0$ such that
\begin{equation}\label{k14}
\left\Vert u_{n}\right\Vert _{X_{b,0}^{T}}\leq C
\end{equation}
and
\begin{equation}
\label{k15}
\left\Vert u_{n}\right\Vert _{X_{-b,-3+3b}^{T}}+\left\Vert f_{n}\right\Vert
_{X_{-b,-3+3b}^{T}}+\left\Vert u_{n}\right\Vert _{X_{-b^{\prime}%
,-1+3b^{\prime}}^{T}}\rightarrow0\text{, as }n\rightarrow+\infty\text{.}
\end{equation}
In addition, assume that for some nonempty open set $\omega\subset\mathbb{T}$%
\[
u_{n}\rightarrow0\text{ strongly in }L^{2}\left(  0,T;L^{2}\left(
\omega\right)  \right)  \text{.}%
\]
Then
\[
u_{n}\rightarrow0\text{ strongly in }L_{loc}^{2}\left([0,T]
;L^{2}\left(  \mathbb{T}\right)  \right)  \text{.}%
\]

\end{proposition}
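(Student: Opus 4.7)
The plan is to apply the Dehman-G\'erard-Lebeau pseudo-differential propagation scheme, adapted to the fourth-order spatial operator $P = \partial_x^4 - \partial_x^2$. Its principal symbol $\xi^4$ has Hamiltonian velocity $4\xi^3$, nonzero and of order $3$ at high frequency; combined with the periodicity of $\mathbb{T}$, this is exactly what allows microlocal $L^2$-mass to be transported from $\omega$ to any open subset of $\mathbb{T}$. The core test quantity is $(A u_n, u_n)_{L^2}$ for a carefully chosen scalar pseudo-differential operator $A$ of negative order; its time derivative is, by the equation, the commutator $[P, A]$ applied to $u_n$ plus remainders involving $f_n$.

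First I would fix a symbol of the form $a(x, \xi) = \chi(x)\,\sigma(\xi)$, where $\sigma$ is a smooth real symbol of order $-(3-3b)$ odd at infinity (so that $[P,A]$ has a definite sign on both frequency half-lines) and $\chi \in C^\infty(\mathbb{T})$ is real and to be specified, and take $A$ to be a self-adjoint Weyl-type quantization. Computing $\frac{d}{dt}(Au_n, u_n)_{L^2}$ along $i\partial_t u_n = Pu_n - f_n$ and pairing with $\psi \in C_c^\infty(0,T)$ yields
\begin{equation*}
 i\int_0^T \psi(t)\,([P,A] u_n, u_n)_{L^2}\,dt
 = -\int_0^T \psi'(t)\,(Au_n, u_n)_{L^2}\,dt + \int_0^T \psi(t)\bigl[(Af_n, u_n) - (Au_n, f_n)\bigr] dt.
\end{equation*}
Using Lemma \ref{estimates_a} together with the dual action of $A$ on Bourgain spaces, each term on the right is bounded by a duality of the form $\|u_n\|_{X_{-b,\,-3+3b}^T}\,\|u_n\|_{X_{b,0}^T}$ (first term) or $\|f_n\|_{X_{-b,\,-3+3b}^T}\,\|u_n\|_{X_{b,0}^T}$ (second and third); by \eqref{k14} and \eqref{k15} all of them tend to $0$. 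The regularity index $-3+3b$ in \eqref{k15} is precisely the one produced by the loss of three derivatives in the commutator combined with the loss of $3|b|$ caused by multiplication by $\chi$ (Lemma \ref{estimates_a}).

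On the left-hand side, symbolic calculus gives $\sigma_{\mathrm{pr}}([P,A])(x,\xi) = -i\{p,a\}(x,\xi) = i\chi'(x)(4\xi^3 - 2\xi)\sigma(\xi)$, of order $3-(3-3b) = 3b$. I would choose $\chi$ nondecreasing on $\mathbb{T}$ with $\chi' \geq 0$, strictly positive on a target sub-interval $\omega' \Subset \mathbb{T}$, and with $\chi'$ supported in $\omega \cup \omega'$. The sharp G\r{a}rding inequality then delivers
\begin{equation*}
 \int_0^T \psi(t)\,([P,A] u_n, u_n)_{L^2}\,dt \;\geq\; c\int_0^T \psi(t)\,\|B u_n\|_{L^2}^2\,dt - \int_0^T \psi(t)\,(K u_n, u_n)_{L^2}\,dt,
\end{equation*}
where $B$ is elliptic of order $3b/2$ on $\omega'$, and $K$ splits into a piece essentially microlocalized in $\omega$ (controlled by the assumed convergence $u_n \to 0$ in $L^2(0,T;L^2(\omega))$ together with \eqref{k14}) and a strictly-lower-order piece (absorbed by the second ingredient in \eqref{k15}).

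Combining the two displays, $B u_n \to 0$ in $L^2((\mathrm{supp}\,\psi)\times\omega')$; by ellipticity of $B$ on $\omega'$ and the uniform bound \eqref{k14}, this upgrades to $u_n \to 0$ strongly in $L^2_{\mathrm{loc}}((0,T); L^2(\omega'))$. Because the bicharacteristic speed $4\xi^3 - 2\xi$ is unbounded as $|\xi| \to \infty$, the image of $\omega$ under the Hamiltonian flow covers $\mathbb{T}$ in arbitrarily short positive time; iterating the construction with finitely many choices of $\chi$ propagates the strong convergence from $\omega$ to the full torus, which is the desired conclusion. The main obstacle I foresee is the bookkeeping in Step 2: every remainder produced by symbolic calculus, both inside the commutator expansion and in the G\r{a}rding estimate, has to land in the dual of $X_{b,0}^T$ at the precise regularity $-3+3b$ prescribed by \eqref{k15}, which is where Lemma \ref{estimates_a} must be applied at each lowering of order and where the Bourgain indices have to balance exactly.
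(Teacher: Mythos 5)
Your overall strategy --- a commutator/multiplier argument in the style of Dehman--G\'erard--Lebeau, tested against $(Au_n,u_n)_{L^2}$ and closed by transporting the information from $\omega$ to all of $\mathbb{T}$ --- is the right one and is the one the paper uses. But there is a concrete error in the \emph{order} of your multiplier, and it breaks precisely the index bookkeeping you flag at the end as the main risk. You take $\sigma$ of order $-(3-3b)$, so that $[P,A]$ has order $3b$. Then: (i) the dualities with $f_n$ no longer match --- after the $3|b|$ loss of Lemma \ref{estimates_a}, $A^{*}u_n$ lands in $X_{b,\,3-6b}^{T}$, so you would need $\left\Vert f_n\right\Vert_{X_{-b,\,-3+6b}^{T}}\to 0$, which for $b>0$ is strictly stronger than what \eqref{k15} provides; (ii) the G\aa rding main term $\left\Vert Bu_n\right\Vert_{L^2}^2$ with $B$ of positive order $3b/2$ is not even a priori finite, since the bound \eqref{k14} gives no spatial regularity beyond $L^2$; and (iii), fatally, the remainder $K$ microlocalized in $\omega$ is of order $3b$, so $(Ku_n,u_n)$ is morally $\left\Vert u_n\right\Vert_{H^{3b/2}(\omega)}^2$, which is \emph{not} controlled by the assumed convergence $u_n\to 0$ in $L^{2}(0,T;L^{2}(\omega))$ unless $b=0$. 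The index $-3+3b$ in \eqref{k15} is not the order of the Fourier multiplier: it is the Sobolev index of the dual Bourgain space produced by an operator of order exactly $-3$ combined with the $3b$ loss caused by multiplication by a smooth function.

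The repair --- and this is what the paper does --- is to take $A=\psi(t)\varphi(x)D^{-3}$ with $D^{-3}$ of order exactly $-3$, so that $[A,\partial_x^2-\partial_x^4]$ has order $0$. Its leading term $4\psi(t)(\partial_x\varphi)\partial_x^3D^{-3}$ is then, up to sign, $\psi(\partial_x\varphi)$ composed with the orthogonal projection off the zero Fourier mode: an exact identity, so no sharp G\aa rding inequality, no sign condition on $\chi'$, and no ellipticity are needed. The strictly negative-order commutator terms are disposed of by the three convergences in \eqref{k15} together with the compact embedding $X_{-b',-1+3b'}^{T}\hookrightarrow X_{-b',-2+3b'}^{T}$ (the order $-1$ term is exactly why $\left\Vert u_n\right\Vert_{X_{-b',-1+3b'}^{T}}\to0$ is assumed). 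One concludes that $\int_0^T\!\!\int_{\mathbb{T}}\psi\,(\partial_x\varphi)\vert u_n\vert^2\to 0$ for every $\varphi\in C^{\infty}(\mathbb{T})$, i.e. $\int\psi\,\phi\,\vert u_n\vert^2\to0$ for every mean-zero $\phi$; choosing $\phi=\chi(\cdot)-\chi(\cdot-x_0)$ with $\chi\in C_0^{\infty}(\omega)$ and invoking the hypothesis on $\omega$ transports the convergence to every translate of $\chi$, and a partition of unity by such translates finishes. This elementary translation step replaces your Hamiltonian-flow argument, which is more machinery than the one-dimensional periodic setting requires.
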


\begin{proof}
Pick $\varphi\in C^{\infty}\left(  \mathbb{T}\right)  $ and $\psi\in
C_{0}^{\infty}\left(  0,T \right)  $ real valued and set%
\[
B=\varphi\left(  x\right)  D^{-3}\text{ \ and \ }A=\psi\left(  t\right)
B,
\]
where $D^{-3}$ is defined by \eqref{k10}. Then%
\[
A^{\ast}=\psi\left(  t\right)  D^{-3}\varphi\left(  x\right)  \text{.}%
\]
For $\epsilon>0$, we denote $A_{\epsilon}=Ae^{\epsilon\partial_{x}
^{2}}  =\psi\left(  t\right)  B_{\epsilon}$ for the regularization of $A$. By a classical way, we can write
\begin{align*}
\alpha_{n,\epsilon} &  =i\left(  -\psi^{\prime}\left(  t\right)  B_{\epsilon}u_{n},u_{n}\right)
+\left(  A_{\epsilon}u_{n},\left( \partial_{x}^{2}-\partial_{x}^{4}\right)  u_{n}\right) \\
&  =\left(  \left[  A_{\epsilon},\partial_{x}^{2}-\partial_{x}^{4}\right]  u_{n},u_{n}\right)  -i\left(  \psi^{\prime}\left(
t\right)  B_{\epsilon}u_{n},u_{n}\right)  \text{.}%
\end{align*}
On the other hand, we have
\begin{equation}\label{k16_a}
\alpha_{n,\epsilon}=\left(  f_{n},A_{\epsilon}^{\ast}u_{n}\right)
_{L^{2}\left(  \mathbb{T\times}\left(  0,T\right)  \right)  }-\left(
A_{\epsilon}u_{n},f_{n}\right)  _{L^{2}\left(  \mathbb{T\times}\left(
0,T\right)  \right)  }\text{.} 
\end{equation}
By using H\"{o}lder inequality and Lemma \ref{estimates_a}, we get that
\begin{align*}
\left\vert \left(  f_{n},A_{\epsilon}^{\ast}u_{n}\right)  _{L^{2}\left(
\mathbb{T\times}\left(  0,T\right)  \right)  }\right\vert  &  \leq\left\Vert
f_{n}\right\Vert _{X_{-b,-3+3b}^{T}}\left\Vert A_{\epsilon}^{\ast}u_{n}\right\Vert
_{X_{b,3-3b}^{T}}\\
&  \leq\left\Vert f_{n}\right\Vert _{X_{-b,-3+3b}^{T}}\left\Vert
u_{n}\right\Vert _{X_{b,0}^{T}}.
\end{align*}
Therefore, from \eqref{k14} and \eqref{k15}, follows that
\begin{equation}\label{k16}
\lim_{n\rightarrow\infty}\sup_{0<\epsilon\leq1}\left\vert \left(
f_{n},A_{\epsilon}^{\ast}u_{n}\right)  _{L^{2}\left(  \mathbb{T\times}\left(
0,T\right)  \right)  }\right\vert =0\text{.} 
\end{equation}
Similar computations yields that
\begin{equation}\label{k18}
\begin{split}
\lim_{n\rightarrow\infty}\sup_{0<\epsilon\leq1}\left\vert \left(  A_{\epsilon
}u_{n},f_{n}\right)  _{L^{2}\left(  \mathbb{T\times}\left(  0,T\right)
\right)  }\right\vert &=0 
\\
\lim_{n\rightarrow\infty}\sup_{0<\epsilon\leq1}\left\vert \left(  \psi
^{\prime}\left(  t\right)  B_{\epsilon}u_{n},u_{n}\right)  _{L^{2}\left(
\mathbb{T\times}\left(  0,T\right)  \right)  }\right\vert &=0\text{.}
\end{split}
\end{equation}
Thus, thanks to \eqref{k16_a}-\eqref{k18}, the following holds%
\[
\lim_{n\rightarrow\infty}\sup_{0<\epsilon\leq1}\left\vert \alpha_{n,\epsilon
}\right\vert =0
\]
and, therefore,
\[
\lim_{n\rightarrow\infty}\sup_{0<\epsilon\leq1}\left\vert \left(  \left[
A_{\epsilon},\partial_{x}^{2}-\partial_{x}^{4}\right]
u_{n},u_{n}\right)  _{L^{2}\left(  \mathbb{T\times}\left(  0,T\right)
\right)  }\right\vert =0,
\]
particularly,
\[
\lim_{n\rightarrow\infty}\left\vert \left(  \left[  A,\partial_{x}^{2}-\partial_{x}^{4}\right]  u_{n},u_{n}\right)
_{L^{2}\left(  \mathbb{T\times}\left(  0,T\right)  \right)  }\right\vert
=0\text{.}%
\]
As $D_x^{-3}:=D^{-3}$ commutes with $\partial_{x}^{k}$, for $k=1,2,3$, we have that%
\begin{equation}\label{k19}
\begin{split}
\left[  A,\partial_{x}^{2}-\partial_{x}^{4}\right]   
=&\left[  \psi\left(  t\right)  B,\partial_{x}^{2}-\partial_{x}^{4}\right] \\
  =&\left[  \psi\left(  t\right)  \varphi\left(  x\right)  D^{-3}%
,\partial_{x}^{2}-\partial_{x}^{4}\right] \\
 =& 4\psi\left(  t\right)  \left(  \partial_{x}^{3}\varphi\right)
\partial_{x}D^{-3}+12\psi\left(  t\right)  \left(  \partial_{x}^{2}%
\varphi\right)  \partial_{x}^{2}D^{-3}+4\psi\left(  t\right)  \left(  \partial_{x}\varphi\right)  \partial_{x}^{3}D^{-3}\\
&  -2\psi\left(  t\right)  \left(  \partial_{x}\varphi\right)
\partial_{x}D^{-3} -\psi\left(  t\right)  \left(  \partial_{x}^{2}\varphi-\partial_{x}^{4}\varphi\right)  D^{-3}=:\sum\limits_{i=1}^{5}I_{i}
\end{split}
\end{equation}
Note that, using \eqref{k14} and \eqref{k15}, we bounded $I_5$ by
\[
\left(  \psi\left(  t\right)  \left( \partial_{x}^{2}-\partial_{x}^{4}\right)  D^{-3}u_{n},u_{n}\right)
_{L^{2}\left(  \mathbb{T\times}\left(  0,T\right)  \right)  }\leq\left\Vert
u_{n}\right\Vert _{X_{-b,-3+3b}^{T}}\left\Vert u_{n}\right\Vert _{X_{b,0}^{T}%
}\text{.}%
\]
Indeed, 
\begin{align*}
\left(  \psi\left(  t\right)  \left( \partial_{x}^{2}-\partial_{x}^{4}\right)  D^{-3}u_{n},u_{n}\right)
_{L^{2}\left(  \mathbb{T\times}\left(  0,T\right)  \right)  }
&\leq C \left\Vert \psi\left(  t\right)  \left(  \partial_{x}^{2}-\partial_{x}^{4}\right)\varphi  \partial_{x}D^{-3}u_{n}\right\Vert _{X_{-b,0}^{T}}\left\Vert u_{n}\right\Vert _{X_{b,0}^{T}}\nonumber\\
&  \leq C\left\Vert (L\varphi)D^{-3}u_{n}\right\Vert _{X_{-b,0}^{T}}\left\Vert
u_{n}\right\Vert _{X_{b,0}^{T}}\nonumber\\
&  \leq C\left\Vert D^{-3}u_{n}\right\Vert _{X_{-b,3b}^{T}}\left\Vert
u_{n}\right\Vert _{X_{b,0}^{T}}\nonumber\\
&  \leq C\left\Vert u_{n}\right\Vert _{X_{-b,-3+3b}^{T}}\left\Vert
u_{n}\right\Vert _{X_{b,0}^{T}}\text{.}
\end{align*}%
Arguing as made in \eqref{k16}, we infer that
\[
\left(  \psi\left(  t\right)  \left( \partial_{x}^{2}-\partial_{x}^{4}\right)  D^{-3}u_{n},u_{n}\right)\rightarrow0\text{, \ as }n\rightarrow+\infty\text{.}%
\]
Note that for the terms $I_{i}$, $i=1,2$ and $4$ in (\ref{k19}), the
loss of regularity is too large if we use the estimate with the same $b$. Using
the index $b^{\prime}$ instead of $b$, we have%
\begin{equation}\label{k20}
\begin{split}
\left(  I_{1}u_{n},u_{n}\right)  _{L^{2}\left(  \mathbb{T\times}\left(
0,T\right)  \right)  }  &  =4\left(  \psi\left(  t\right)  \left(
\partial_{x}^{3}\varphi\right)  \partial_{x}D^{-3}u_{n},u_{n}\right)
_{L^{2}\left(  \mathbb{T\times}\left(  0,T\right)  \right)  }\\
&\leq C \left\Vert \psi\left(  t\right)  \left(  \partial_{x}^{3}%
\varphi\right)  \partial_{x}D^{-3}u_{n}\right\Vert _{X_{b^{\prime
},2-3b^{\prime}}^{T}}\left\Vert u_{n}\right\Vert _{X_{-b^{\prime
},-2+3b^{\prime}}^{T}}\\\
&  \leq C\left\Vert u_{n}\right\Vert _{X_{b^{\prime},0}^{T}}\left\Vert
u_{n}\right\Vert _{X_{-b^{\prime},-2+3b^{\prime}}^{T}}\\
\left(  I_{2}u_{n},u_{n}\right)  _{L^{2}\left(  \mathbb{T\times}\left(
0,T\right)  \right)  }  &  =12\left(  \psi\left(  t\right)  \left(
\partial_{x}^{2}\varphi\right)  \partial_{x}^{2}D^{-3}u_{n},u_{n}\right)
_{L^{2}\left(  \mathbb{T\times}\left(  0,T\right)  \right)  }\\
&  \leq C\left\Vert \psi\left(  t\right)  \left(  \partial_{x}^{2}
\varphi\right)  \partial_{x}^{2}D^{-3}u_{n}\right\Vert _{X_{b^{\prime
},1-3b^{\prime}}^{T}}\left\Vert u_{n}\right\Vert _{X_{-b^{\prime
},-1+3b^{\prime}}^{T}}\\
&  \leq C\left\Vert u_{n}\right\Vert _{X_{b^{\prime},0}^{T}}\left\Vert
u_{n}\right\Vert _{X_{-b^{\prime},-1+3b^{\prime}}^{T}} 
\end{split}
\end{equation}
and
%\begin{align}
%\left(  I_{3}u_{n},u_{n}\right)  _{L^{2}\left(  \mathbb{T\times}\left(0,T\right)  \right)  }  &  =4\left(  \psi\left(  t\right)  \left(\partial_{x}\varphi\right)  \partial_{x}^{3}D^{-3}u_{n},u_{n}\right)_{L^{2}\left(  \mathbb{T\times}\left(  0,T\right)  \right)  }\nonumber\\&  \leq C\left\Vert \psi\left(  t\right)  \left(  \partial_{x}\varphi\right)  \partial_{x}^{3}D^{-3}u_{n}\right\Vert _{X_{b^{\prime},1-4b^{\prime}}^{T}}\left\Vert u_{n}\right\Vert _{X_{-b^{\prime},-1+4b^{\prime}}^{T}}\nonumber\\&  \leq\left\Vert u_{n}\right\Vert _{X_{b^{\prime},0}^{T}}\left\Vertu_{n}\right\Vert _{X_{-b^{\prime},-1+4b^{\prime}}^{T}}\text{,} \label{k22}%\end{align}%
\begin{equation}\label{k23}
\begin{split}
\left(  I_{4}u_{n},u_{n}\right)  _{L^{2}\left(  \mathbb{T\times}\left(
0,T\right)  \right)  }  &  =-2\left(  \psi\left(  t\right)  \left(
\partial_{x}\varphi\right)  \partial_{x}D^{-3}u_{n},u_{n}\right)
_{L^{2}\left(  \mathbb{T\times}\left(  0,T\right)  \right)  }\\
&  \leq C\left\Vert \psi\left(  t\right)  \left(  \partial_{x}\varphi\right)
\partial_{x}D^{-3}u_{n}\right\Vert _{X_{b^{\prime},2-3b^{\prime}}^{T}%
}\left\Vert u_{n}\right\Vert _{X_{-b^{\prime},-2+3b^{\prime}}^{T}}\\
&  \leq C\left\Vert u_{n}\right\Vert _{X_{b^{\prime},0}^{T}}\left\Vert
u_{n}\right\Vert _{X_{-b^{\prime},-2+3b^{\prime}}^{T}}\text{.} 
\end{split}
\end{equation}
%\begin{align}
%\left(  I_{6}u_{n},u_{n}\right)  _{L^{2}\left(  \mathbb{T\times}\left(0,.T\right)  \right)  }  &  =\left(  \psi\left(  t\right)  \left(\partial_{x}^{2}\varphi\right)  \partial_{x}D^{-4}u_{n},u_{n}\right)_{L^{2}\left(  \mathbb{T\times}\left(  0,.T\right)  \right)  }\nonumber\\&  \leq\left\Vert \psi\left(  t\right)  \left(  \partial_{x}^{2}%\varphi\right)  \partial_{x}D^{-4}u_{n}\right\Vert _{X_{b^{\prime},3-4b^{\prime}}^{T}}\left\Vert u_{n}\right\Vert _{X_{-b^{\prime},-3+4b^{\prime}}^{T}}\nonumber\\&  \leq\left\Vert u_{n}\right\Vert _{X_{b^{\prime},0}^{T}}\left\Vertu_{n}\right\Vert _{X_{-b^{\prime},-3+4b^{\prime}}^{T}}\text{,} \label{k24}%\end{align}
Observe that 
%by property (3) of Bourgain space described in Section \ref{sec2}, give us  that
\begin{equation}\label{k25}
X_{-b^{\prime},-1+3b^{\prime}}^{T}\hookrightarrow X_{-b^{\prime}%
,-2+3b^{\prime}}^{T}%
\end{equation}
where $\hookrightarrow$ denotes a compact imbedding. Thus, from \eqref{k14},
\eqref{k15} and \eqref{k25}, we have that \eqref{k20}-\eqref{k23} tends to $0$
as $n\rightarrow+\infty$. 

To conclude the proof remains analyze the third term of \eqref{k19}, that is, $I_3$. Remark that $-\partial_{x}^{3}D^{-3}$ is the
orthogonal projection on the subspace of functions with $\hat{u}\left(
0\right)  =0$. Futhermore,
\[
X_{b,0}^{T}\hookrightarrow X_{0,0}^{T}\hookrightarrow X_{-b^{\prime},0}%
^{T}\text{, for }0\leq b^{\prime}\leq b\leq1\text{,}%
\]
thus, using the Rellich Theorem, we see that%
\[
\hat{u}_{n}\left(  0,t\right)  \longrightarrow\hat{u}\left(  0,t\right)
=0\text{ in }X_{0,0}^{T}\equiv L^{2}\left(  0,T\right)  \text{ strongly,}%
\]
and hence%
\[
\left(  \psi\left(  t\right)  \left(  \partial_{x}\varphi\right)  \hat{u}%
_{n}\left(  0,t\right)  ,u_{n}\right)  _{L^{2}\left(  \mathbb{T\times}\left(
0,T\right)  \right)  }\longrightarrow0\text{.}%
\]
We have proved that, for any $\varphi\in C^{\infty}\left(  \mathbb{T}\right)
$ and $\psi\in C_{0}^{\infty}\left(  \left(  0,T\right)  \right)  $,%
\[
\left(  \psi\left(  t\right)  \left(  \partial_{x}\varphi\right) \partial^3_xD^{-3}u
_{n}  ,u_{n}\right)  _{L^{2}\left(  \mathbb{T\times}\left(
0,T\right)  \right)  }\longrightarrow0\text{.}%
\]

Observe that $\phi\in C^{\infty}\left(  \mathbb{T}\right)  $ can be
written in the form $\partial_{x}\varphi$ for some function $\varphi\in
C^{\infty}\left(  \mathbb{T}\right)  $ if and only if $\int_{\mathbb{T}}%
\phi\left(  x\right)  dx=0$. Thus, for any $\chi\in C_{0}^{\infty}\left(
\omega\right)  $ and any $x_{0}\in\mathbb{T}$, $\phi\left(  x\right)
=\chi\left(  x\right)  -\chi\left(  x-x_{0}\right)  $ can be written as
$\phi=\partial_{x}\varphi$ for some $\varphi\in C^{\infty}\left(
\mathbb{T}\right)  .$ Since $u_{n}$ is strongly convergent to $0$ in $L^{2}\left(  0,T;L^{2}\left(
\omega\right)  \right)  $,%
\[
\lim_{n\rightarrow\infty}\left(  \psi\left(  t\right)  \chi u_{n},u_{n}\right)
_{L^{2}\left(  \mathbb{T\times}\left(  0,T\right)  \right)  }=0\text{.}%
\]
Then, for any $x_{0}\in\mathbb{T}$,%
\[
\lim_{n\rightarrow\infty}\left(  \varphi\left(  t\right)  \chi\left(
\cdot-x_{0}\right)  u_{n},u_{n}\right)  _{L^{2}\left(  \mathbb{T\times}\left(
0,T\right)  \right)  }=0\text{.}%
\]
Finally, we closed the proof constructing a partion of unity on $\mathbb{T}$
with some functions of the form $\chi_{i}\left(  \cdot-x_{0}^{i}\right)  $,
with $\chi_{i}\in C_{0}^{\infty}\left(  \omega\right)  $ and $x_{0}^{i}%
\in\mathbb{T}$.
\end{proof}

To close this section we prove the gain of regularity of the linear fourth order Schr\"odinger equation.

\begin{proposition}
[\textit{Propagation of regularity}]\label{prop_b}Let $T>0$, $0\leq b<1$ and $f\in
X_{-b,r}^{T}$ be given. Let\ $u\in X_{b,r}^{T}$ be a solution of%
\[
i\partial_{t}u+\partial_{x}^{2}u-\partial_{x}^{4}u=f\text{.}%
\]
If there exists a nonempty $\omega\subset\mathbb{T}$ such that $u\in
L_{loc}^{2}\left([0,T];H^{r+\rho}\left(  \omega\right)
\right)  $ for some $\rho$ with%
\[
0<\rho\leq\min\left\{  \frac{3}{2}(1-b),\frac{1}{2}\right\}  \text{,}%
\]
then%
\[
u\in L_{loc}^{2}\left([0,T];H^{r+\rho}\left(  \mathbb{T}%
\right)  \right)  \text{.}%
\]

\end{proposition}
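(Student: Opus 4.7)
The strategy closely parallels Proposition~\ref{prop_a} and the pseudo-differential argument of \cite{dehman-gerard-lebeau}, but with a multiplier chosen so that its commutator with $\partial_x^2-\partial_x^4$ has an elliptic leading term of spatial order $2\rho$. As a preliminary reduction, applying $D^r$ to the equation (which commutes with $\partial_x^2-\partial_x^4$) reduces the claim to $r=0$: from $u\in X_{b,0}^T$, $f\in X_{-b,0}^T$, and $u\in L^2_{loc}([0,T];H^\rho(\omega))$ we must deduce $u\in L^2_{loc}([0,T];H^\rho(\mathbb T))$.

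Fix an arbitrary $x_0\in\mathbb T$ and choose $\chi_0\in C_0^\infty(\omega)$ and $\chi_1\in C_0^\infty(\mathbb T)$ with $\chi_1(x_0)\neq 0$ and $\int_{\mathbb T}(\chi_1^2-\chi_0^2)\,dx=0$, so that a primitive $\alpha\in C^\infty(\mathbb T)$ of $\chi_1^2-\chi_0^2$ exists on the circle. Take $\psi\in C_0^\infty((0,T))$ nonnegative and set $A=\psi(t)\,\mathrm{Op}(a)$ with
\begin{equation*}
a(x,\xi)=-\alpha(x)\,\operatorname{sgn}(\xi)\,\langle\xi\rangle^{2\rho-3}
\end{equation*}
(suitably mollified near $\xi=0$), a pseudo-differential operator of spatial order $2\rho-3$. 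Repeating the computation in the proof of Proposition~\ref{prop_a} gives the identity
\begin{equation*}
(f,A^*u)-(Au,f)=\bigl([A,\partial_x^2-\partial_x^4]u,u\bigr)-i\bigl(\psi'(t)\,\mathrm{Op}(a)u,u\bigr).
\end{equation*}
A short calculation with the symbol $p(\xi)=-\xi^2-\xi^4$ of $\partial_x^2-\partial_x^4$ and the Poisson bracket $\{a,p\}=\partial_x a\,(2\xi+4\xi^3)$ shows that the principal symbol of $-i[A,\partial_x^2-\partial_x^4]$ equals $-\psi(t)\,\partial_x a(x,\xi)\,(2\xi+4\xi^3)$, which for large $|\xi|$ behaves like $4\psi(t)(\chi_1^2-\chi_0^2)(x)\,|\xi|^{2\rho}$; the remainder of this commutator is a pdo of spatial order at most $2\rho-1$.

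Applying the sharp G\aa{}rding inequality to the non-negative part of the principal symbol yields
\begin{equation*}
-i\bigl([A,\partial_x^2-\partial_x^4]u,u\bigr)\geq c\,\|\chi_1 D^\rho u\|_{L^2(J\times\mathbb T)}^2-C\|\chi_0 D^\rho u\|_{L^2([0,T]\times\mathbb T)}^2-C\|u\|_{L^2([0,T];H^{\rho-1/2})}^2,
\end{equation*}
where $J\subset(0,T)$ is any subinterval on which $\psi\geq c>0$. On the other hand, the left-hand side of the identity is bounded by $C\|f\|_{X_{-b,0}^T}\|u\|_{X_{b,0}^T}$ provided $A^*$ maps $X_{b,0}^T\to X_{b,0}^T$ continuously: by Lemma~\ref{estimates_a} the multiplication by $\alpha(x)$ costs $3b$ derivatives in $X_{b,s}^T$, which is exactly compensated by the $3-2\rho$ derivatives gained from $\langle D\rangle^{2\rho-3}$ precisely when $\rho\leq\tfrac{3}{2}(1-b)$ — the first hypothesis. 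The lower-order term $(\psi'(t)\,\mathrm{Op}(a)u,u)$ and the G\aa{}rding error $\|u\|_{L^2([0,T];H^{\rho-1/2})}^2$ are both absorbed into $\|u\|_{X_{b,0}^T}^2$ as soon as $\rho\leq\tfrac12$ — the second hypothesis — via $H^{\rho-1/2}\hookrightarrow L^2$. Combining these bounds yields
\begin{equation*}
\|\chi_1 D^\rho u\|_{L^2(J\times\mathbb T)}^2\leq C\bigl(\|f\|_{X_{-b,0}^T}\|u\|_{X_{b,0}^T}+\|u\|_{X_{b,0}^T}^2+\|\chi_0 D^\rho u\|_{L^2([0,T]\times\mathbb T)}^2\bigr),
\end{equation*}
whose last term is finite by hypothesis since $\mathrm{supp}\,\chi_0\subset\omega$. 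As $x_0$ was arbitrary in $\mathbb T$, summing a finite partition of unity around points of $\mathbb T$ gives $u\in L^2_{loc}([0,T];H^\rho(\mathbb T))$.

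The main technical point is the careful pseudo-differential book-keeping: one must treat the non-classical symbol $\operatorname{sgn}(\xi)\langle\xi\rangle^{2\rho-3}$ by a mollification near $\xi=0$ that contributes only a smoothing error, verify that the sub-leading terms of $[A,\partial_x^2-\partial_x^4]$ are really of order $\leq 2\rho-1$ (not $2\rho$) so that G\aa{}rding closes the estimate, and track the $3|b|$ loss of Lemma~\ref{estimates_a} through both $A$ and $A^*$. The two numerical constraints $\rho\leq\tfrac{3}{2}(1-b)$ and $\rho\leq\tfrac12$ emerge respectively from the duality pairing on the left-hand side and from the absorption of the G\aa{}rding remainder, recovering exactly the hypothesis of the proposition.
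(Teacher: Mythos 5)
Your plan is correct in substance and rests on the same mechanism as the paper's proof: pair the equation against $A^{*}u$ with $A=\psi(t)\times(\text{operator of spatial order } 2(r+\rho)-3)$, so that the commutator with $\partial_x^2-\partial_x^4$ is elliptic of order $2(r+\rho)$ with coefficient a derivative of the cutoff; bound the duality terms in $X_{b,\cdot}^T\times X_{-b,\cdot}^T$ (this is exactly where $\rho\leq\tfrac32(1-b)$ enters, through the $3b$ loss of Lemma \ref{estimates_a}); and absorb the order-$(2\rho-1)$ remainders using $\rho\leq\tfrac12$. The execution differs from the paper's in three ways. First, the paper works directly at regularity $s=r+\rho$ with $A=\psi(t)D^{2s-3}\varphi(x)$ rather than reducing to $r=0$; your reduction is legitimate, but you must transfer the local hypothesis through the nonlocal $D^{r}$ via $\chi D^{r}u=D^{r}(\chi u)+[\chi,D^{r}]u$, the commutator gaining one derivative (here $\rho\leq\tfrac12<1$ saves you). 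Second, where you invoke sharp G\aa rding, the paper avoids pseudo-differential positivity altogether: it takes $\partial_x\varphi=\chi^2(x)-\chi^2(x-x_0)$, writes the leading commutator term exactly as $\bigl(\psi D^{s-3}\chi\partial_x^3u_n,D^{s}\chi u_n\bigr)$ plus commutators $[\chi,D^{s}]$ and $[D^{s-3},\chi]$ of lower order (controlled by $\|u_n\|_{L^2H^{\rho+s-1}}$ with $\rho+s-1\leq r$), and then sums a partition of unity over $x_0$. Be aware that sharp G\aa rding as you state it does not by itself produce the coercive term $c\|\chi_1D^{\rho}u\|^2$ — for a nonnegative symbol it only gives a lower bound by $-C\|u\|^2_{H^{\rho-1/2}}$; you should instead factor $\mathrm{Op}\bigl(\psi\chi_1^2|\xi|^{2\rho}\bigr)=(\sqrt{\psi}\,\chi_1D^{\rho})^{*}(\sqrt{\psi}\,\chi_1D^{\rho})$ modulo an operator of order $2\rho-1$, which is precisely the paper's computation. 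Third, the paper first regularizes, $u_n=e^{\frac1n\partial_x^2}u$, so that all pairings are a priori finite, and passes to the limit at the end; your plan should include this step. One point in your favor: the factor $\operatorname{sgn}(\xi)$ in your symbol makes $\partial_x^3\,\mathrm{Op}\bigl(\operatorname{sgn}(\xi)\langle\xi\rangle^{2\rho-3}\bigr)$ genuinely elliptic of order $2\rho$, with symbol $-i|\xi|^{3}\langle\xi\rangle^{2\rho-3}$, so the coercive term separates cleanly; the paper's $D^{2s-3}\partial_x^3$ carries the odd symbol $-i\operatorname{sgn}(k)|k|^{2s}$, and your variant is, if anything, the more transparent one at the final positivity step.
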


\begin{proof}
We first regularize $u_{n}=\exp\left(  \frac{1}{n}\partial_{x}^{2}\right)
u:=\Theta_{n}u$ and $f_{n}:=\Theta_{n}f$, with%
\begin{equation*}
\left\Vert u_{n}\right\Vert _{X_{b,r}^{T}}\leq C\text{ \ and \ }\left\Vert
f_{n}\right\Vert _{X_{-b,r}^{T}}\leq C\text{,} 
\end{equation*}
for some constant $C>0$ and $n=1,2,\dots$. 

Let $s=r+\rho$, $\varphi\in C^{\infty}\left(  \mathbb{T}\right)  $ and $\psi\in
C_{0}^{\infty}\left(  0,T\right) $ taking real values. Set
$Bu=D^{2s-3}\varphi\left(  x\right)  $ and $A=\psi\left(  t\right)  B$, where $D^{-3}$ is defined by \eqref{k10}. If
$L=i\partial_{t}+\partial_{x}^{2}-\partial_{x}^{4}$, we
write%
\begin{align*}
&  \left(  Lu_{n},A^{\ast}u_{n}\right)  _{L^{2}\left(  \mathbb{T\times}\left(
0,T\right)  \right)  }-\left(  Au_{n},Lu_{n}\right)  _{L^{2}\left(
\mathbb{T\times}\left(  0,T\right)  \right)  }\\
&  =\left(  \left[  A,\partial_{x}^{2}-\partial_{x}^{4}\right]  u_{n},u_{n}\right)  _{L^{2}\left(  \mathbb{T\times}\left(
0,T\right)  \right)  }-i\left(  \psi^{\prime}\left(  t\right)  Bu_{n}%
,u_{n}\right)  _{L^{2}\left(  \mathbb{T\times}\left(  0,T\right)  \right)  }%
\end{align*}
and deduce that%
\begin{align*}
\left\vert \left(  Au_{n},Lu_{n}\right)  _{L^{2}\left(  \mathbb{T\times
}\left(  0,T\right)  \right)  }\right\vert  &  =\left\vert \left(
Au_{n},f_{n}\right)  _{L^{2}\left(  \mathbb{T\times}\left(  0,T\right)
\right)  }\right\vert \\
&  \leq\left\Vert Au_{n}\right\Vert _{X_{b,-r}^{T}}\left\Vert f_{n}\right\Vert
_{X_{-b,r}^{T}}\\
&  \leq C\left\Vert u_{n}\right\Vert _{X_{b,r+2\rho-3+3b}^{T}}\left\Vert
f_{n}\right\Vert _{X_{-b,r}^{T}}\text{,}%
\end{align*}
since $r+2\rho-3+3b\leq r$. The same estimates for the other terms imply that%
\[
\left\vert \left(  \left[  A,\partial_{x}^{2}-\partial_{x}^{4}\right]  u_{n},u_{n}\right)  _{L^{2}\left(  \mathbb{T\times
}\left(  0,T\right)  \right)  }\right\vert \leq C\text{.}%
\]
Note that%
\begin{equation}\label{k27}
\begin{split}
\left[  A,\partial_{x}^{2}-\partial_{x}^{4}\right] 
  =&4\psi\left(  t\right) D^{2s-3}  \left(  \partial_{x}^{3}\varphi\right)
\partial_{x}+12\psi\left(  t\right) D^{2s-3} \left(  \partial_{x}^{2}%
\varphi\right)  \partial_{x}^{2}+4\psi\left(  t\right) D^{2s-3} \left(  \partial_{x}\varphi\right)  \partial_{x}^{3}\\
&  -2\psi\left(  t\right) D^{2s-3} \left(  \partial_{x}\varphi\right)
\partial_{x} -\psi\left(  t\right) D^{2s-3} \left(  \partial_{x}^{2}\varphi-\partial_{x}^{4}\varphi\right) \\
=&:\sum\limits_{i=1}^{5}I_{i}.
\end{split}
\end{equation}
Also, observe that
\begin{equation} \label{k29}
2s-3+2=2r+2\rho-1\leq2r
\end{equation}
and%
\begin{equation}\label{k30}
2s-3+1=2r+2\rho-2\leq2r\text{.} 
\end{equation}
Now, we will bound the terms of \eqref{k27}. As \eqref{k29} and \eqref{k30} are verified, taking $$\rho\leq\min\left\{  \frac{3}{2}\left(  1-b\right),\frac{1}{2}\right\},$$  we have that 
\begin{equation}
\begin{split}
\left\vert \left(  I_{1}u_{n},u_{n}\right)  _{L^{2}\left(  \mathbb{T\times
}\left(  0,T\right)  \right)  }\right\vert  &  \leq C\left\Vert \psi
D^{2s-3}\left(  \partial_{x}^{3}\varphi\right)  \partial_{x}u_{n}\right\Vert _{L^{2}\left(  0,T;H^{-r}\left(  \mathbb{T}\right)  \right)
}\left\Vert u_{n}\right\Vert _{L^{2}\left(  0,T;H^{r}\left(  \mathbb{T}%
\right)  \right)  }\\
&  \leq C\left\Vert u_{n}\right\Vert _{L^{2}\left(  0,T;H^{r}\left(
\mathbb{T}\right)  \right)  }^{2}\leq C\text{,}%
\\
\left\vert \left(  I_{2}u_{n},u_{n}\right)  _{L^{2}\left(  \mathbb{T\times
}\left(  0,T\right)  \right)  }\right\vert  &  \leq C\left\Vert \psi
D^{2s-3}\left(  \partial_{x}\varphi\right)  \partial_{x}^{2}%
u_{n}\right\Vert _{L^{2}\left(  0,T;H^{-r}\left(  \mathbb{T}\right)  \right)
}\left\Vert u_{n}\right\Vert _{L^{2}\left(  0,T;H^{r}\left(  \mathbb{T}%
\right)  \right)  }\\
&  \leq C\left\Vert u_{n}\right\Vert _{L^{2}\left(  0,T;H^{r}\left(
\mathbb{T}\right)  \right)  }^{2}\leq C,
\end{split}
\end{equation}
%\begin{align*}
%\left\vert \left(  I_{4}u_{n},u_{n}\right)  _{L^{2}\left(  \mathbb{T\times
%}\left(  0,T\right)  \right)  }\right\vert  &  \leq C\left\Vert \psi
%D^{2s-3}\left(  \partial_{x}\varphi\right)  \partial_{x}u_{n}\right\Vert
%_{L^{2}\left(  0,T;H^{-r}\left(  \mathbb{T}\right)  \right)  }\left\Vert
%u_{n}\right\Vert _{L^{2}\left(  0,T;H^{r}\left(  \mathbb{T}\right)  \right)
%}\\
%&  \leq C\left\Vert u_{n}\right\Vert _{L^{2}\left(  0,T;H^{r}\left(
%\mathbb{T}\right)  \right)  }^{2}\leq C\text{}%
%\end{align*}%
and
\begin{align*}
\left\vert \left(  I_{5}u_{n},u_{n}\right)  _{L^{2}\left(  \mathbb{T\times
}\left(  0,T\right)  \right)  }\right\vert  &  =\left\vert \left(  \psi\left(
t\right)  D^{2s-3}\left(\partial_{x}^{2}\varphi-\partial_{x}^{4}\varphi\right)  u_{n},u_{n}\right)  _{L^{2}\left(
\mathbb{T\times}\left(  0,T\right)  \right)  }\right\vert \\
&  \leq\left\Vert \psi\left(  t\right)  D^{2s-3}\left( \partial_{x}^{2}\varphi-\partial_{x}^{4}\varphi\right)u_{n}\right\Vert _{L^{2}\left(  0,T;H^{-r}\left(  \mathbb{T}\right)  \right)
}\left\Vert u_{n}\right\Vert _{L^{2}\left(  0,T;H^{r}\left(  \mathbb{T}%
\right)  \right)  }\\
&  \leq C\left\Vert u_{n}\right\Vert _{L^{2}\left(  0,T;H^{r}\left(
\mathbb{T}\right)  \right)  }^{2}\leq C\text{,}%
\end{align*}
for any $n\geq1$. Similarly of $I_1$ estimate we can get
\begin{equation*}
|I_4|\leq C.
\end{equation*}

 %Finally, we can control $\left(  I_{1}u_{n},u_{n}\right)
%_{L^{2}\left(  \mathbb{T\times}\left(  0,T\right)  \right)  }$ as follows
%\begin{equation}
%\left\vert \left(  I_{1}u_{n},u_{n}\right)  _{L^{2}\left(  \mathbb{T\times
%}\left(  0,T\right)  \right)  }\right\vert %=\left\vert \left( 4 \psi\left(
%t\right)  D^{2s-3}\left(  \partial_{x}^3\varphi\right)  \partial_{x}%
%u_{n},u_{n}\right)  _{L^{2}\left(  \mathbb{T\times}\left(  0,T\right)
%\right)  }\right\vert \leq C\text{.} \label{k31a}%
%\end{equation}

Finally, we will  control $I_3$.
For any $\chi\in C_{0}^{\infty}\left(  \omega\right)  $, we get that
\begin{align}
\left(  \psi\left(  t\right)  D^{2s-3}\chi^{2}\partial_{x}^{3}u_{n}%
,u_{n}\right)  _{L^{2}\left(  \mathbb{T\times}\left(  0,T\right)  \right)  }
  =&\left(  \psi\left(  t\right)  D^{s-3}\chi\partial_{x}^{3}u_{n},\chi
D^{s}u_{n}\right)  +\left(  \psi\left(  t\right)  \left[  D^{s-3},\chi\right]
\chi\partial_{x}^{3}u_{n},D^{s}u_{n}\right) \nonumber\\
  =&\left(  \psi\left(  t\right)  D^{s-3}\chi\partial_{x}^{3}u_{n},D^{s}\chi
u_{n}\right)  +\left(  \psi\left(  t\right)  D^{s-3}\chi\partial_{x}^{3}u_{n},\left[  \chi,D^{s}\right]  u_{n}\right) \label{k31}\\
&  +\left(  \psi\left(  t\right)  \left[  D^{s-3},\chi\right]  \chi\partial
_{x}^{3}u_{n},D^{s}u_{n}\right):=\tilde{I}_{1}+\tilde{I}_{2}+\tilde{I}_{3}\text{.}\nonumber
\end{align}
In this moment, we need control the right hand side of \eqref{k31}. First, note that we infer from the assumptions that%
\[
\chi u\in L_{loc}^{2}\left(0,T ;H^{s}\left(  \mathbb{T}%
\right)  \right)
\]
and
\[
\chi\partial_{x}^{3}u\in L_{loc}^{2}\left( 0,T ;H^{s-3}\left(
\mathbb{T}\right)  \right)  \text{.}%
\]
Then, as $s=r+\rho\leq r+1$, we have
\[
\chi u_{n}=\Theta_{n}\chi u+\left[  \chi,\Theta_{n}\right]  u\in L_{loc}%
^{2}\left( 0,T ;H^{s}\left(  \mathbb{T}\right)  \right)
\text{,}%
\]
due \cite[Lemma A.3]{Laurent-esaim}. Applying the same argument to
$\chi\partial_{x}^{3}u_{n}$, follows that
\begin{equation}
\left\vert \tilde{I}_{1}\right\vert \leq C\text{.} \label{k32}%
\end{equation}
Moreover, from  \cite[Lemma A.1]{Laurent-esaim} and the fact that $u\in
L^{2}\left(  0,T;H^{r}\left(  \mathbb{T}\right)  \right)  $, the second one can be bounded in the following way
\begin{equation}\label{k33}
\begin{split}
\left\vert \tilde{I}_{2}\right\vert  &  =\left\vert \left(  \psi\left(
t\right)  D^{s-3}\chi\partial_{x}^{3}u_{n},\left[  \chi,D^{s}\right]
u_{n}\right)  _{L^{2}\left(  \mathbb{T\times}\left(  0,T\right)  \right)
}\right\vert\\
&  =\left\vert \left(  \psi\left(  t\right)  D^{r+\rho-3}\chi\partial_{x}%
^{3}u_{n},\left[  \chi,D^{s}\right]  u_{n}\right)  _{L^{2}\left(
\mathbb{T\times}\left(  0,T\right)  \right)  }\right\vert \\
&  =\left\vert \left(  \psi\left(  t\right)  D^{\rho}D^{r-3}\chi\partial
_{x}^{3}u_{n},\left[  \chi,D^{s}\right]  u_{n}\right)  _{L^{2}\left(
\mathbb{T\times}\left(  0,T\right)  \right)  }\right\vert \\
&  \leq\left\Vert \psi\left(  t\right)  D^{r-3}\chi\partial_{x}^{3}%
u_{n}\right\Vert _{_{L^{2}\left(  \mathbb{T\times}\left(  0,T\right)  \right)
}}\left\Vert D^{\rho}\left[  \chi,D^{s}\right]  u_{n}\right\Vert _{L^{2}\left(
\mathbb{T\times}\left(  0,T\right)  \right)  }\\
&  \leq C\left\Vert u_{n}\right\Vert _{L^{2}\left(  0,T;H^{r}\left(
\mathbb{T}\right)  \right)  }\left\Vert u_{n}\right\Vert _{L^{2}\left(
0,T;H^{\rho+s-1}\left(  \mathbb{T}\right)  \right)  }\leq C\text{.}
\end{split}
\end{equation}
Lastly, by similar computations, we ensure that%
\begin{equation}\label{k344}
\left\vert \tilde{I}_{3}\right\vert \leq C\text{.}%
\end{equation}
Consequently,
\[
\left\vert \left(  \psi\left(  t\right)  D^{2s-3}\chi^{2}\partial_{x}^{3}%
u_{n},u_{n}\right)  _{L^{2}\left(  \mathbb{T\times}\left(  0,T\right)
\right)  }\right\vert \leq C\text{,}%
\]
for any $n\geq1$. Then, writing $\partial_{x}\varphi=\chi^{2}\left(  x\right)  -\chi^{2}\left(  x-x_{0}\right)$,  from (\ref{k31}), \eqref{k32}, \eqref{k33} and \eqref{k344} yields,
\[
\left\vert \left(  \psi\left(  t\right)  D^{2s-3}\chi^{2}\left(  \cdot
-x_{0}\right)  \partial_{x}^{3}u_{n},u_{n}\right)  _{L^{2}\left(
\mathbb{T\times}\left(  0,T\right)  \right)  }\right\vert \leq C\text{,}%
\]
for all $n\geq1$.
To conclude the proof, is necessary to use a partition of unity as in the
proof of Proposition \ref{prop_b}, to obtain%
\[
\left\vert \left(  \psi\left(  t\right)  D^{2s-3}\partial_{x}^{3}u,u\right)
_{L^{2}\left(  \mathbb{T\times}\left(  0,T\right)  \right)  }\right\vert \leq
C\text{,}%
\]
that is%
\[
\int_{0}^{T}\psi\left(  t\right)  \left(  \sum\limits_{k\neq0}\left\vert
k\right\vert ^{2s}\left\vert \hat{u}\left(  k,t\right)  \right\vert
^{2}dt\right)  \leq C\text{,}%
\]
or equivalently,
\[
\left\Vert u\right\Vert _{L_{loc}^{2}\left([0,T];H^{s}\left(
\mathbb{T}\right)  \right)  }^{2}\leq C\text{.}%
\]
Thus, the proof is complete.
\end{proof}

\section{Unique continuation property\label{sec6}}

We present, in this section, the unique continuation result for 4NLS. However, before to enunciate the UCP, let us prove a auxiliary lemma which is a consequence of Proposition \ref{prop_b}.

\begin{lemma}
\label{unique_a}Let $u\in X_{b,0}^{T}$ be a solution of%
\begin{equation}\label{k34}
i\partial_{t}u+\partial_{x}^{2}u-\partial_{x}^{4}u+\lambda \left\vert
u\right\vert^2u=0 \ \ \text{on \ }\mathbb{T\times}\left(  0,T\right)
\text{.} 
\end{equation}
Here $b>\frac{1}{2}$ and we assume that $u\in C^{\infty}\left(  \omega\times\left(  0,T\right)  \right)
$, where $\omega\subset\mathbb{T}$ nonempty set. Then,%
\[
u\in C^{\infty}\left(  \mathbb{T\times}\left(  0,T\right)  \right)  \text{.}%
\]
\end{lemma}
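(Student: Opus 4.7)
The plan is to iterate Proposition \ref{prop_b} together with the local well-posedness theory of Theorem \ref{LWP} to bootstrap the regularity of $u$ in the spatial variable from $X_{b,0}^T$ up to $X_{b,s}(I)$ for every $s\geq 0$ and every subinterval $I\Subset(0,T)$. Once arbitrary $H^s_x$ regularity is established, smoothness in time will be an immediate consequence of the equation.

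Set $Lu:=i\partial_t u+\partial_x^2u-\partial_x^4u$, so that \eqref{k34} reads $Lu=-\lambda|u|^2u$. Fix $\rho:=\min\{3(1-b)/2,\,1/2\}>0$; I will prove by induction on $n\geq 0$ that $u\in X_{b,n\rho}(I)$ for every $I\Subset(0,T)$. The base case $n=0$ is the hypothesis. Assume the statement at level $n$. By Lemma \ref{trilinear_estimates} applied on subintervals of length at most $1$, $|u|^2u\in X_{-b,n\rho}(I)$, so the source $f:=-\lambda|u|^2u$ belongs to $X_{-b,n\rho}(I)$. Since $u\in C^\infty(\omega\times(0,T))$, in particular $u\in L^2_{loc}(I;H^{(n+1)\rho}(\omega))$, and Proposition \ref{prop_b} with $r=n\rho$ yields
\[
u\in L^2_{loc}(I;H^{(n+1)\rho}(\mathbb{T})).
\]
Now select $t_0$ in the interior of $I$ with $u(t_0)\in H^{(n+1)\rho}(\mathbb{T})$, which holds for almost every such $t_0$. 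Invoking Theorem \ref{LWP} with initial datum $u(t_0)$ at time $t_0$ and $g\equiv 0$ produces a unique solution in $X_{b,(n+1)\rho}([t_0-\delta,t_0+\delta])$, and uniqueness in the class $X_{b,0}$ forces this solution to coincide with $u$. A finite covering of $I$ by such neighborhoods, combined with Lemma \ref{lemma4}, gives $u\in X_{b,(n+1)\rho}(I)$, closing the induction.

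Having $u\in X_{b,s}(I)$ for every $s\geq 0$ and using the embedding $X_{b,s}(I)\hookrightarrow C(I;H^s(\mathbb{T}))$ valid for $b>1/2$, we obtain $u\in C((0,T);H^s(\mathbb{T}))$ for every $s\geq 0$, so $u(t,\cdot)\in C^\infty(\mathbb{T})$ for each $t\in(0,T)$. The equation $i\partial_t u=-\partial_x^2u+\partial_x^4u+\lambda|u|^2u$ then expresses $\partial_t u$ as spatial derivatives of $u$ plus a polynomial nonlinearity, hence $\partial_t u\in C((0,T);H^s(\mathbb{T}))$ for every $s$. Differentiating and iterating in $k$ gives $\partial_t^k u\in C((0,T);H^s(\mathbb{T}))$ for all $k,s\geq 0$, which is equivalent to $u\in C^\infty(\mathbb{T}\times(0,T))$.

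The main technical step is the upgrade from the $L^2_{loc}H^{(n+1)\rho}_x$ control produced by Proposition \ref{prop_b} back to the genuine Bourgain-space bound $X_{b,(n+1)\rho}(I)$ needed to reapply the trilinear estimate; this is handled by picking a regular time slice and invoking Theorem \ref{LWP} together with $X_{b,0}$-uniqueness. The assumption $b<1$, ensured here since $b\in(1/2,11/16)$, is what keeps the regularity gain $\rho>0$ uniform in $n$, so that finitely many iterations reach any prescribed $s$.
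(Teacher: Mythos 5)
Your proof is correct and follows essentially the same route as the paper's: the trilinear estimate places the nonlinearity in $X_{-b,\,n\rho}$, Proposition \ref{prop_b} upgrades the smoothness on $\omega$ to an $L^2_{loc}H^{(n+1)\rho}_x$ bound on all of $\mathbb{T}$, and a well-chosen time slice combined with Theorem \ref{LWP} and uniqueness in $X_{b,0}$ restores the Bourgain-space regularity needed to iterate. Your write-up is in fact slightly more careful than the paper's (explicit induction with gain $\rho$ per step, and the final passage from spatial to joint space--time smoothness), but the underlying argument is identical.
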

\begin{proof}
Note that $\lambda \left\vert u\right\vert^2u\in X_{-b,0}^{T}$, by Lemma
\ref{trilinear_estimates}. Thus, from Proposition \ref{prop_b}, we get%
\[
u\in L_{loc}^{2}([0,T];H^{1+\frac{3}{2}(1-b)}(
\mathbb{T}))  \text{.}%
\]
Choose $t_{0}$ such that $u\left(  t_{0}\right)  \in H^{1+\frac{3}{2}(1-b)}\left(
\mathbb{T}\right)  $. We can then solve \eqref{k34} in $X_{b%
,1+\frac{3}{2}(1-b)}^{T}$ with the initial data $u\left(  t_{0}\right)  $. By
uniqueness of solution in $X_{b,0}^{T}$, we conclude that $u\in
X_{b,1+\frac{3}{2}(1-b)}^{T}$. An iterated application of Proposition
\ref{prop_b} give as%
\[
u\in L^{2}\left(  0,T;H^{r}\left(  \mathbb{T}\right)  \right)  \text{,
}\quad \forall r\in\mathbb{R}\text{,}%
\]
and, hence $u\in C^{\infty}\left(  \mathbb{T\times}\left(  0,T\right)
\right)  $.
\end{proof}

The UCP is presented as follows:
\begin{proposition}
[\textit{Unique continuation property}]\label{UCP}For every $T>0$ and $\omega$ any nonempty open set of $\mathbb{T}$, the only solution $u\in C^{\infty}([0,T]\times\mathbb{T})$ of the system%
\[
\left\{
\begin{array}
[c]{lll}%
i\partial_{t}u+\partial_{x}^{2}u-\partial_{x}^{4}u=b(x,t)u&  & \text{on }\mathbb{T\times}\left(  0,T\right)
\text{,}\\
u=0 &  & \text{on }\omega\times\left(  0,T\right)  \text{,}%
\end{array}
\right.
\]
where $b(x,t)\in C^{\infty}([0,T]\times\mathbb{T})$, is the trivial one 
\[
u\left(  x,t\right)  =0\text{ \ on \ }\mathbb{T\times}\left(  0,T\right)
\text{.}%
\]
\end{proposition}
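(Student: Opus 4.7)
The plan is to prove the unique continuation by combining a local Carleman estimate for the operator $P=i\partial_t+\partial_x^2-\partial_x^4$ with a propagation-of-zero-set argument and an iteration around the compact torus $\mathbb{T}$. The input $u\in C^{\infty}([0,T]\times\mathbb{T})$ allows me to work pointwise in $x$ and freely commute derivatives with cutoffs, so the entire argument lives at the level of smooth functions; Bourgain regularity plays no role here.

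First I would establish (or quote, following Zheng \cite{zheng} and the fourth-order Carleman framework of Mercado--Osses--Rosier) a local Carleman inequality of the form
\begin{equation*}
\sum_{j=0}^{3}\tau^{7-2j}\int e^{2\tau\phi}|\partial_x^{j}v|^{2}\,dx\,dt
\;\leq\; C\int e^{2\tau\phi}|Pv|^{2}\,dx\,dt,
\end{equation*}
valid for $\tau$ large and $v\in C_{0}^{\infty}$ supported in a small coordinate patch, where $\phi$ is a suitable pseudoconvex weight with $\partial_x\phi\neq 0$. The key point is that the direction $\partial_x$ is non-characteristic for $P$ (the symbol $\tau+\xi^{2}-\xi^{4}$ has non-vanishing $\xi$-derivative at $|\xi|$ large), so Hörmander's pseudoconvexity condition is satisfied by weights of the form $\phi=e^{\lambda\psi(x,t)}$ with $\psi_x>0$; all commutators are computed symbolically and the positive $\tau$-powers absorb the potential $b(x,t)u$ on the right-hand side since $b$ is bounded.

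With the Carleman estimate at hand I would run the standard propagation argument. Let $V\subset\mathbb{T}$ be the (relatively open) interior of the set on which $u(\cdot,t)\equiv 0$ for all $t$ in a subinterval $I\subset(0,T)$; initially $V\supset\omega$ and $I=(0,T)$. Pick a boundary point $x_{\ast}\in\partial V$, say $V$ reaches $x_{\ast}$ from the left, and fix a weight $\phi$ near $(x_{\ast},t_{0})$ with $\partial_x\phi>0$ and $\phi$ maximal on the half-disc that protrudes into $V^{c}$. Take a cutoff $\chi$ supported in a small ball $B$ around $(x_{\ast},t_{0})$ and equal to $1$ on a concentric smaller ball. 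Apply the Carleman estimate to $\chi u$; on the right I obtain
\begin{equation*}
P(\chi u)=\chi Pu+[P,\chi]u=\chi\, b\, u+[P,\chi]u.
\end{equation*}
The first term is absorbed by the left-hand side for $\tau$ large. The commutator $[P,\chi]u$ is supported on $\operatorname{supp}\nabla\chi$, part of which lies inside $V\times I$ (where $u=0$) and part of which lies where $\phi$ is strictly smaller than on the region I aim to conclude; letting $\tau\to\infty$ forces $u$ to vanish on a slightly enlarged neighborhood of $V$. A finite chain of such steps, made possible by compactness of $\mathbb{T}\setminus\omega$, together with an arbitrarily small shrinking of $I$ at each iteration, gives $u\equiv 0$ on $\mathbb{T}\times(0,T)$.

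The main obstacle is the first step, namely producing the Carleman estimate for the fourth-order operator $P$ and in particular choosing a weight whose pseudoconvexity works uniformly on patches covering the whole circle. Because the principal part $-\partial_x^{4}$ is of order four, the conjugated operator $e^{\tau\phi}Pe^{-\tau\phi}$ has a large number of lower-order terms that must be carefully organized so that the bracket $[P_\phi^*,P_\phi]$ is positive on the characteristic set; this is exactly what is carried out in \cite{zheng}, and I would either import that estimate or redo it in the simpler translation-invariant periodic geometry. Once (i) is secured, steps (ii)--(iii) are essentially bookkeeping, and the smoothness hypotheses on $u$ and $b$ remove every technical concern about defining the integrals and commutators that appear.
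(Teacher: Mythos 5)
Your proposal is correct and follows essentially the same route as the paper: the paper's proof of this proposition is precisely a citation of the Carleman estimate for $P=i\partial_t+\partial_x^2-\partial_x^4$ from Zheng \cite{zheng} (together with \cite[Corollary 6.1]{Isakov}), from which the vanishing propagates across the non-characteristic surfaces $\{x=c\}$ around the compact circle. You have simply made explicit the cutoff/commutator bookkeeping and the finite covering argument that the paper leaves implicit in the phrase ``direct consequence.''
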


\begin{proof} Proposition \ref{UCP} is a direct consequence of the Carleman estimate for the operator $P=i\partial_{t}+\partial_{x}^{2}-\partial_{x}^{4}$, proved by Zheng \cite[Theorem 1.1.]{zheng} (see also \cite[Corollary 6.1]{Isakov}), together with Lemma \ref{unique_a}. 
%The reader can see more details of the estimate proved by Zheng in the Appendix at the end of the article.
\end{proof}

\begin{corollary}\label{UCP_main}
Let $\omega$ be any nonempty open set of  $\mathbb{T}$ and $u\in X^T_{\frac{1}{2},0}$ solution of
\[
\left\{
\begin{array}
[c]{lll}%
i\partial_{t}u+\partial_{x}^{2}u-\partial_{x}^{4}u=\lambda \left\vert
u\right\vert^2u &  & \text{on }\mathbb{T\times}\left(  0,T\right)
\text{,}\\
u=0 &  & \text{on }\omega\times\left(  0,T\right)  \text{,}%
\end{array}
\right.
\]
then $u\left(  x,t\right)  =0\text{ \ on \ }\mathbb{T\times}\left(  0,T\right)$.
\end{corollary}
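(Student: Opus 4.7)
The plan is to reduce this nonlinear unique continuation statement to the smooth linear unique continuation property already established in Proposition \ref{UCP}, using Lemma \ref{unique_a} as the bridge. The key observation is that the hypothesis $u\equiv 0$ on $\omega\times(0,T)$ is much stronger than needed for the regularity bootstrap: it trivially implies $u\in C^\infty(\omega\times(0,T))$, so Lemma \ref{unique_a} immediately upgrades $u$ to a globally smooth function.

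More precisely, I would first note that since $u\in X^T_{1/2,0}$ solves $i\partial_t u+\partial_x^2 u-\partial_x^4 u+\lambda|u|^2u=0$ (after moving the nonlinearity to the left) and vanishes on $\omega\times(0,T)$, we are in the exact setting of Lemma \ref{unique_a}: a Bourgain-space solution which is $C^\infty$ on a spacetime cylinder over $\omega$. That lemma, through iterated application of the propagation of regularity (Proposition \ref{prop_b}) together with the trilinear estimate of Lemma \ref{trilinear_estimates} to handle $|u|^2u$ at each step, yields $u\in C^\infty(\mathbb{T}\times(0,T))$. The borderline $b=\tfrac12$ is handled by the same bootstrap, since a single application of Proposition \ref{prop_b} with exponent slightly above $\tfrac12$ already lifts the regularity into the range where Lemma \ref{unique_a} applies verbatim.

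Having obtained $u\in C^\infty(\mathbb{T}\times(0,T))$, I would then set
\[
b(x,t):=\lambda|u(x,t)|^2,
\]
which belongs to $C^\infty([0,T]\times\mathbb{T})$ because $u$ is smooth, and rewrite the equation as the linear problem
\[
i\partial_t u+\partial_x^2 u-\partial_x^4 u=b(x,t)\,u\quad\text{on }\mathbb{T}\times(0,T),\qquad u=0\quad\text{on }\omega\times(0,T).
\]
Proposition \ref{UCP} applied to this $u$ and this coefficient $b$ directly concludes that $u\equiv 0$ on $\mathbb{T}\times(0,T)$, which is the claim. The only step that carries any real content is the regularity bootstrap via Lemma \ref{unique_a}; once smoothness is established, the smooth UCP is a black box that closes the argument.
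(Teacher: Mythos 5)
Your proposal is correct and follows essentially the same route as the paper: the paper's proof is exactly the two-step argument of invoking Lemma \ref{unique_a} (the hypothesis $u=0$ on $\omega\times(0,T)$ trivially giving smoothness there) to conclude $u\in C^{\infty}(\mathbb{T}\times(0,T))$, and then applying Proposition \ref{UCP} with $b(x,t)=\lambda|u|^2$. Your extra remark about the borderline exponent $b=\tfrac12$ versus the hypothesis $b>\tfrac12$ in Lemma \ref{unique_a} is a point the paper silently glosses over, so flagging it is reasonable.
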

\begin{proof}
By using Lemma \ref{unique_a}, we infer that $u\in C^{\infty}\left(  \mathbb{T\times}\left(  0,T\right)  \right)  \text{.}$ An application of Proposition \ref{UCP} give us $u=0$, as desired.
\end{proof}

\begin{remark}\label{rmk_UCP}
Proposition \ref{UCP} assures us that for $u\in X_{b,0}^{T}$ solution of%
\[
\left\{
\begin{array}
[c]{lll}%
i\partial_{t}u+\partial_{x}^{2}u-\partial_{x}^{4}u=0 &  & \text{on }\mathbb{T\times}\left(  0,T\right)
\text{,}\\
u=0 &  & \text{on }\omega\times\left(  0,T\right)  \text{,}%
\end{array}
\right.
\]
we also have $u\left(  x,t\right)  =0\text{ \ on \ }\mathbb{T\times}\left(  0,T\right)$.
\end{remark}

\section{Stabilization: Global result\label{sec7}}

This section is to establish the main result of this article. The propagation and unique continuation property will play a keys role for this study. We are concerned with stability properties of the following system
\begin{equation}
\left\{
\begin{array}
[c]{lll}%
i\partial_{t}u+\partial_{x}^{2}u-\partial_{x}^{4}u+ia^2u=\lambda \left\vert
u\right\vert^2u &  & \text{on }\mathbb{T\times}\left(
0,T\right)  \text{,}\\
u\left(  x,0\right)  =u_{0}\left(  x\right)  &  & \text{on }\mathbb{T}\text{,}%
\end{array}
\right.  \label{k35}%
\end{equation}
where $\lambda\in\mathbb{R}$ and $u_{0}\in L^{2}\left(
\mathbb{T}\right)  $, in $L^2$--level. 
%The main result of this section is given as below.
%\begin{theorem}
%[Global Stability]\label{global_stability}Let $a(x):=a\in L^{\infty}(\mathbb{T})$ taking real values such that $a^2(x)>\eta$ on a nonempty open set $\omega\subset\mathbb{T}$, for some constant $\eta>0$. For every $T>0$, any $R_{0}>0$ and $u_{0}\inL^{2}\left(  \mathbb{T}\right)  $ with
%\[
%\left\Vert u_{0}\right\Vert _{L^2(\mathbb{T})}\leq R_{0}\text{,}%
%\]
%there exist $\gamma>0$ and $C>0$ such that the corresponding solution $u$ of
%(\ref{k35}) satisfies
%\begin{equation}
%\left\Vert u\left(  \cdot,t\right)  \right\Vert _{L^2(\mathbb{T})}\leq Ce^{-\gamma
%t}\left\Vert u_{0}\right\Vert _{L^2(\mathbb{T})}\text{, }\forall t>0\text{.} \label{k45}%
%\end{equation}
%\end{theorem}
\subsection{Proof of Theorem \ref{main}} Theorem \ref{main} is a consequence of the following \textit{observability inequality}:

\vspace{0.2cm}

\noindent\textit{Let $T>0$ and $R_{0}>0$ be given. There exists a constant
$\gamma>0$ such that for any $u_{0}\in L^{2}\left(  \mathbb{T}\right)  $
satisfying $\left\Vert u_{0}\right\Vert _{L^2(\mathbb{T})}\leq R_{0}$,  the corresponding solution $u$ of (\ref{k35}) satisfies%
\begin{equation}
\left\Vert u_{0}\right\Vert _{L^2(\mathbb{T})}^{2}\leq\gamma\int_{0}^{T}\left\Vert
au\right\Vert _{L^2(\mathbb{T})}^{2} dt\text{.} \label{k46}%
\end{equation}}

\vspace{0.2cm}

In fact, if \eqref{k46} holds, then follows from the energy estimate
\[
\left\Vert u\left(\cdot,  t\right)  \right\Vert _{L^2(\mathbb{T})}^{2}=\left\Vert u\left(\cdot,0\right)  \right\Vert _{L^2(\mathbb{T})}^{2}-\int_{0}^t\left\Vert au
\right\Vert _{L^2(\mathbb{T})}^{2}(\tau)d\tau, \quad\forall t\geq0
\]
that $$\left\Vert u\left(\cdot,  T\right)  \right\Vert _{L^2(\mathbb{T})}^{2}\leq(1-\gamma^{-1})\left\Vert u_0  \right\Vert _{L^2(\mathbb{T})}^{2}.$$ Thus, $$\left\Vert u\left(\cdot,  mT\right)  \right\Vert _{L^2(\mathbb{T})}^{2}\leq(1-\gamma^{-1})^m\left\Vert u_0  \right\Vert _{L^2(\mathbb{T})}^{2},$$
which gives
\begin{equation*}
\left\Vert u\left(  \cdot,t\right)  \right\Vert _{L^2(\mathbb{T})}\leq Ce^{-\gamma
t}\left\Vert u_{0}\right\Vert _{L^2(\mathbb{T})}\text{, }\forall t>0\text{.} 
\end{equation*}

Finally, we obtain a constant $\gamma$ independent
of $R_0$ by noticing that for $t>c\left(\left\Vert u_0  \right\Vert _{L^2(\mathbb{T})}\right)$, the $L^2$ norm of $u(\cdot,t)$ is smaller than $1$, so that we can take the $\gamma$ corresponding to $R_0 = 1$, thus proving the result. \qed

\subsection{Proof of the observability inequality}  If  \eqref{k46} does not occurs, there exist a sequence $\left\{u_{n}\right\}  _{n\in\mathbb{N}}=u_{n}$ solution of (\ref{k35}) satisfying%
\[
\left\Vert u_{n}\left(  0\right)  \right\Vert _{L^2(\mathbb{T})}\leq R_{0}%
\]
and%
\begin{equation} \label{k47}%
\int_{0}^{T}\left\Vert au_{n}\right\Vert _{L^2(\mathbb{T})}^{2}dt<\frac{1}{n}\left\Vert
u_{0,n}\right\Vert _{L^2(\mathbb{T})}^{2}\text{,}
\end{equation}
where $u_{0,n}=u_{n}\left(  0\right)  $. Since $\gamma_{n}:=\left\Vert u_{0,n}\right\Vert
_{L^2(\mathbb{T})}\leq R_{0}$, one can choose a subsequence of $\gamma_{n}=\left\{  \gamma
_{n}\right\}  _{n\in\mathbb{N}}$, still denote by $\gamma_{n}$, such that,
\[
\lim_{n\rightarrow\infty}\gamma_{n}=\gamma\text{.}%
\]
Thus, we will analyze two cases for $\gamma$: $\gamma>0$ or $\gamma=0$.  In both cases we will get a contradiction. 

\vspace{0.2cm}
\noindent \textit{\textbf{Case one:}} $\lim_{n\rightarrow\infty}\gamma_{n}=\gamma>0$:
\vspace{0.2cm}

Observe that  $u_{n}$ is bounded in $L^{\infty}\left(  0,T;L^{2}%
\left(  \mathbb{T}\right)  \right)$ and, therefore, in $X^T_{b,0}$, for $b>\frac12$. Then, as $X^T_{b,0}$ is a separable Hilbert space we can extract a subsequence such that $$u_n \rightharpoonup u \quad\text{in} \quad X^T_{b,0}, $$ for some $u\in X^T_{b,0}$. By compact embedding, as we have $b<1$ and $-b<0$, we can (also) extract a subsequence such that we have strong convergence in $X^T_{-b,-1+b}$. Now, we prove that the weak limit $u$ is a solution of \eqref{k35}.  
 Thus, $\left\vert u_n\right\vert^2u_n $ is
bounded in $X^T_{-b',0}$, for $b'>\frac{5}{16}$.

Note that, there is a subsequence $u_{n}$, still denote by $u_{n}$, such that%
\[
\left\vert u_n\right\vert^2u_n  \rightharpoonup f\quad\text{in} \quad X^T_{-b',0},\quad \text{for}\quad b'>\frac{5}{16}
\]
and
\[
\left\vert u_n\right\vert^2u_n  \rightarrow f \quad\text{in} \quad X^T_{-1+b,-b},\quad \text{for}\quad \frac{5}{16}<b<1.%
\]
Moreover, from \eqref{k47} it follows that%
\begin{equation*}
\int_{0}^{T}\left\Vert au_{n}\right\Vert _{L^2(\mathbb{T})}^{2}dt\longrightarrow\int_{0}%
^{T}\left\Vert au\right\Vert _{L^2(\mathbb{T})}^{2}dt=0\text{,} 
\end{equation*}
which implies that $u\left(  x,t\right)  =0$ on
$\omega\times\left(  0,T\right)  $.
Therefore, letting $n\rightarrow\infty$, we obtain from (\ref{k35}) that%
\begin{equation*}
\left\{
\begin{array}
[c]{lll}%
i\partial_{t}u+\partial_{x}^{2}u-\partial_{x}^{4}u=f &  &
\text{on }\mathbb{T\times}\left(  0,T\right)  \text{,}\\
u\left(  x,t\right)  =0&  & \text{on }\mathbb{\omega\times
}\left(  0,T\right)  \text{.}%
\end{array}
\right.  
\end{equation*}
We affirm that $$f=-ia^2u+\lambda \left\vert u\right\vert^2u.$$ In fact,
let $w_{n}=u_{n}-u$ and $f_{n}=-ia^2u_n+\lambda \left\vert u_n\right\vert^2u_n  -f$. Remark that from \eqref{k47},
\begin{equation*}
\int_{0}^{T}\left\Vert aw_{n}\right\Vert _{L^2(\mathbb{T})}^{2}dt\longrightarrow0\text{.}
\end{equation*}
Thus, 
$$f_n \rightarrow 0 \quad\text{in} \quad X^T_{-1+b,-b}. $$
It also implies 
\[
u_{n}\rightarrow0\text{ in }L^{2}\left(  0,T;L^{2}\left(  \omega
\right)  \right) 
\]
and 
\[
w_{n}\rightarrow0\text{ in }L^{2}\left(  0,T;L^{2}\left(  \omega
\right)  \right) .
\]
Applying Proposition \ref{prop_a}, we get%
\[
w_{n}\longrightarrow0\text{ in }L_{loc}^{2}\left([0,T]
;L^{2}\left(  \mathbb{T}\right)  \right)  \text{.}%
\]
Then, we can pick one $t_0\in [0,T]$ such that $w_n(t_0)$ tends to $0$ strongly in $L^2(\mathbb{T})$.

Let $v$ the solution of
\begin{equation}\label{eqv}
\left\{
\begin{array}
[c]{lll}%
i\partial_{t}v+\partial_{x}^{2}v-\partial_{x}^{4}v+ia^2v=\lambda \left\vert
v\right\vert^2v &  & \text{on }\mathbb{T\times
}\left(  0,T\right)  \text{,}\\
v\left(  t_0\right)  =u(t_0). &  & %
\end{array}
\right.
\end{equation}
We claim  that $u=v$. Indeed, by Theorem \ref{LWP} we have that the map data-to-solution of \eqref{k35} is locally Lipschitz continuous. Since $u_n(t_0)\rightarrow v(t_0)$ in $L^2(\mathbb{T})$ and $ia^2u_n\rightarrow ia^2u$ in $L^2([0,T];L^2(\mathbb{T}))$, we get $u_n\rightarrow v$ in $X_{0,b}^T$, thus $u=v$ and $u$ is a solution of \eqref{eqv}. Unique continuation property, Corollary \ref{UCP_main}, implies $u=0$. It follows that $\|u_n(0)\|_{L^2(\mathbb{T})}\rightarrow 0$, which leads a contradiction of our hypothesis $\alpha>0$.

\vspace{0.2cm}
\noindent \textit{\textbf{Case two:}} $\lim_{n\rightarrow\infty}\gamma_{n}=\gamma=0$:
\vspace{0.2cm}

Consider the following change of variable $v_{n}=\frac{u_{n}}%
{\gamma_{n}}$, $\forall n\geq1$. Thus, $v_n$ satisfies 
%Note that $\xi_{n}>0$, $\forall n\in\mathbb{N}$. Set . Then%
\[
i\partial_{t}v_n+\partial_{x}^{2}v_n-\partial_{x}^{4}v_n+ia^2v_n=\lambda\gamma_n^2 \left\vert
v_n\right\vert^2v_n \text{,}%
\]
and
\begin{equation}\label{k50}
\int_{0}^{T}\left\Vert av_{n}\right\Vert _{L^2(\mathbb{T})}^{2}dt<\frac{1}{n}.
\end{equation}
Then, we have
\begin{equation}\label{k51}
\left\Vert v_{n}\left(  0\right)  \right\Vert _{L^2(\mathbb{T})}=1\text{.} 
\end{equation}
Observe that $v_n:=\left\{  v_{n}\right\}  _{n\in\mathbb{N}}$ is bounded in
$L^{\infty}\left(  0,T;L^{2}\left(  \mathbb{T}\right)  \right)  \cap
X_{b,0}^{T}$. Thus, we can extract a subsequence, still denote by
$v_{n}$, such that%
\[
v_{n}\rightharpoonup v\text{ in }X_{b,0}^{T}.
\]
Furthermore, by Duhamel formula and multilinear estimates \eqref{trilinear_estimates}, we obtain  $$\left\Vert v_n \right\Vert_{X^T_{b,0}}\leq C\left\Vert v_n(0)\right\Vert_{L^2(\mathbb{T})}+CT^{1-b-b'}\left(\left\Vert v_n \right\Vert_{X^T_{b,0}}+\gamma_n^2\left\Vert v_n \right\Vert_{X^T_{b,0}}^3\right),$$
for $0<b'\leq\frac12\leq b$ and $b+b'\leq 1$.
 
 If we take $CT^{1-b-b'}<1/2$, independent of $v_n$, we get $$\left\Vert v_n \right\Vert_{X^T_{b,0}}\leq C+C\gamma_n^2\left\Vert v_n \right\Vert_{X^T_{b,0}}^3.$$
Lemma \ref{lemma2} states that $\left\Vert v_n \right\Vert_{X^T_{b,0}}$ is continuous in $T$. Since it is bounded near $t=0$ and $\gamma_n\to 0$, we obtain by a classical boot strap argument (see, e.g, \cite[Lemma 2.2]{BaGe1999}) that $v_n$ is bounded on $X^T_{b,0}$. Using Lemma \ref{lemma4}, we can conclude that it is bounded in $X^T_{b,0}$ even for large $T$. Thus, $$\gamma_n^2 \left\vert v_n\right\vert^2v_n \to 0 \quad\text{in} \quad X^T_{-b',0}$$ and so $$\gamma_n^2 \left\vert v_n\right\vert^2v_n \to 0 \quad\text{in} \quad X^T_{-b,-1+b}.$$ Then, we can extract a subsequence such that  $$v_{n}  \rightharpoonup v \quad\text{in} \quad X^T_{b,0}$$ and $$v_{n}  \to v \quad\text{in} \quad X^T_{-1+b,-b}.$$
Therefore, the weak limit $v$ satisfies
\begin{equation*}
\left\{
\begin{array}
[c]{lll}%
i\partial_{t}v+\partial_{x}^{2}v-\partial_{x}^{4}v+ia^2v=0 &  &
\text{on }\mathbb{T\times}\left(  0,T\right)  \text{,}\\
v\left(  x,t\right)  =0&  & \text{on }\mathbb{\omega\times
}\left(  0,T\right)  \text{,}%
\end{array}
\right. 
\end{equation*}
which implies that $v\left(  x,t\right)$ is the trivial solution, that is, $v\left(  x,t\right)=0$, thanks to remark \ref{rmk_UCP} of Proposition \ref{UCP}.

Argument of contradiction \eqref{k50} yields that
\begin{equation}\label{ax}
ia^2v_{n}\longrightarrow0 \quad\text{in} \quad L^2(0,T;L^2(\mathbb{T})),
\end{equation}
and so 
$$ia^2v_{n}\longrightarrow0 \quad\text{in} \quad X^T_{-1+b,-b}.$$
An application of Proposition \ref{prop_a}, as in the case one $\gamma>0$, ensures that
\begin{equation}\label{ax_1}
v_{n}\longrightarrow0\text{ in }L^{2}_{loc}\left( [ 0,T];
L^{2}\left(  \mathbb{T}\right)  \right)  \text{.}
\end{equation}
From the energy estimate for $t_{0}\in\left(  0,T\right)  $, we get
\[
\left\Vert v_{n}\left(  0\right)  \right\Vert _{L^2(\mathbb{T})}^{2}=\left\Vert v_{n}\left(
t_{0}\right)  \right\Vert _{L^2(\mathbb{T})}^{2}+\int_{0}^{t_{0}}\left\Vert av_{n}%
\right\Vert _{L^2(\mathbb{T})}^{2}dt\text{.}%
\]
Passing the limit on the last equality, by using \eqref{ax} and \eqref{ax_1}, we have that $\left\Vert v_{n}\left(  0\right)  \right\Vert
_{L^2(\mathbb{T})}\rightarrow0$,  which contradicts \eqref{k51}. Therefore, the proof is archived. \qed

\subsection*{Acknowledgments} R. A. Capistrano-Filho was partially supported by CNPq (Brazil) by grant number 306475/2017-0 and "Propesq (UFPE) - Edital Qualis A".   This work was carried out during visits of the first author to the Federal University of Alagoas and visits of the second author to the Federal University of Pernambuco. The authors would like to thank both Universities for its hospitality.

\end{document}